\def\nicecolourscheme{\shadedraw[top color=blue!0, bottom color=blue!20]}
\definecolor{Myblue}{rgb}{0,0,0.6}
\newtheorem{theorem}{Theorem}[section]
\newtheorem{proposition}[theorem]{Proposition}
\newtheorem{lemma}[theorem]{Lemma}
\newtheorem{corollary}[theorem]{Corollary}
\theoremstyle{definition}
\newtheorem{definition}[theorem]{Definition}
\newtheorem{example}[theorem]{Example}
\newtheorem{remark}[theorem]{Remark}
\newtheorem*{theoremn}{Theorem}
\numberwithin{equation}{section}
\def\eval{\operatorname{ev}}
\def\Res{\operatorname{Res}}
\def\Inj{\operatorname{Inj}}
\def\inc{\operatorname{inc}}
\def\Coker{\operatorname{Coker}}
\def\free{\operatorname{free}}
\def\can{\operatorname{can}}
\def\ac{\operatorname{ac}}
\def\K{\mathbf{K}}
\def\D{\mathbf{D}}
\def\Hom{\operatorname{Hom}}
\def\modd{\operatorname{mod}}
\def\proj{\operatorname{proj}}
\def\Spec{\operatorname{Spec}}
\DeclareMathOperator{\Ext}{Ext}
\DeclareMathOperator{\coh}{coh}
\DeclareMathOperator{\End}{End}
\DeclareMathOperator{\str}{str}
\DeclareMathOperator{\tr}{tr}
\DeclareMathOperator{\sg}{sg}
\DeclareMathOperator{\skos}{K}
\DeclareMathOperator{\hmf}{hmf}
\DeclareMathOperator{\Perf}{Perf}
\DeclareMathOperator{\hht}{ht}
\DeclareMathOperator{\CM}{CM}
\begin{document}

\newcommand{\ud}{\mathrm{d}}
\newcommand{\cat}[1]{\mathcal{#1}}
\newcommand{\lto}{\longrightarrow}
\newcommand{\xlto}[1]{\stackrel{#1}\lto}
\newcommand{\mf}[1]{\mathfrak{#1}}
\newcommand{\md}[1]{\mathscr{#1}}
\newcommand{\dbsing}[1]{\mathbf{D}_{\sing}(#1)}
\newcommand{\perd}[1]{\overline{#1}}
\def\rdev{\mathbb{R}}
\def\dual{\vee}
\def\shom{\mathscr{H}\! om}
\def\send{\mathscr{E}\! nd}
\def\l{\,|\,}
\def\homot{\simeq}
\def\ceta{\vartheta}
\def\bs{\boldsymbol}
\def\cf{\bs{cf}}
\def\ci{\bs{ci}}
\def\ZZ{\mathbb{Z}}
\def\Ztwo{\mathbb{Z}_2}
\def\ptr{\underline{\tr}}
\def\plangle{\langle\!\langle}
\def\prangle{\rangle\!\rangle}
\def\serre{\mathbb{S}}
\def\m{\cat{M}}
\def\holn{\cat{L}}

\title{Residues and Duality for Singularity Categories of Isolated Gorenstein Singularities}
\author{Daniel Murfet}
\email{daniel.murfet@math.uni-bonn.de}
\address{Hausdorff Center for Mathematics, University of Bonn}

\begin{abstract}
We study Serre duality in the singularity category of an isolated Gorenstein singularity and find an explicit formula for the duality pairing in terms of generalised fractions and residues. For hypersurfaces we recover the residue formula of the string theorists Kapustin and Li. These results are obtained from an explicit construction of complete injective resolutions of maximal Cohen-Macaulay modules.
\end{abstract}

\maketitle

\section{Introduction}

Let $k$ be a field of characteristic zero and $\cat{T}$ a $k$-linear triangulated category. The concept of duality in $\cat{T}$ was formalised by Bondal and Kapranov \cite{Bondal89} in terms of a triangulated functor $\serre: \cat{T} \lto \cat{T}$ together with a family of nondegenerate pairings
\begin{equation}\label{eq:introserre1}
\cat{T}(Y, \serre X) \otimes_k \cat{T}(X,Y) \lto k
\end{equation}
natural in $X,Y$ and satisfying a condition involving compatibility with suspension. The motivating example is the bounded derived category of coherent sheaves on a smooth projective variety $Z$ over $k$. In this case it follows from classical Serre duality that there is a family of nondegenerate pairings for the functor $\serre = (-) \otimes^{\mathbb{L}} \omega_Z[d]$, where $d = \dim(Z)$ and $\omega_Z$ is the canonical bundle, and the pairing can be defined explicitly in terms of residues and traces \cite{ResiduesDuality}. For this reason the functor $\serre$ is referred to in general as a \emph{Serre functor}.

If we take the point of view that a triangulated category is a geometric object in its own~right, then the Serre functor and pairing play a fundamental role. For example, if $Z$ is a Calabi-Yau variety over $\mathbb{C}$, so $\omega_Z \cong \cat{O}_Z$, then the pairing in the derived category computes correlators in a quantum field theory probing the geometry of $Z$. The field theory is the B-twisted supersymmetric topological sigma model with target $Z$ on an oriented Riemann surface $M$ with boundary \cite{Witten92a,Witten92}, in which the bosonic fields are the components of maps $M \lto Z$. In the quantisation~the basic quantities of interest are correlators defined by Feynman path integrals over the space of fields. The boundary sector of the theory is described by the derived category, whose objects and morphisms correspond to branes and open strings, respectively \cite{Kontsevich, Douglas}. Remarkably, the pairing in the derived category gives the correlator of a pair of open string states, when $M$ is a disc.

In this paper we study Serre duality in a different triangulated category, the singularity category of a scheme with isolated Gorenstein singularities. Let $Z$ be a separated noetherian scheme of finite dimension over $k$, and consider the inclusion
\[
\Perf(Z) \subseteq \D^b(\coh Z)
\]
of the full subcategory of perfect complexes into the bounded derived category of coherent sheaves. Recall that a complex of coherent sheaves is \emph{perfect} if it is locally isomorphic in the derived category to a bounded complex of vector bundles. As is well-known, this subcategory $\Perf(Z)$ is dense if and only if $Z$ is regular, and for singular $Z$ this motivates the study of the Verdier quotient
\[
\D_{\sg}(Z) := \D^b(\coh Z)/\Perf(Z)\,.
\]
This quotient was studied in the affine setting by Buchweitz in an unpublished manuscript \cite{Buchweitz} and more recently in the global setting by Orlov \cite{Orlov04} in connection with string theory and mirror symmetry. In order to obtain a category whose morphism spaces are finite-dimensional we need to restrict to schemes which are \emph{Gorenstein} \cite{Orlov04,Avramov2}, which means that the local rings $\cat{O}_{Z,x}$ have finite injective dimension as modules over themselves, for every $x \in Z$. 

In this paper we also restrict to singularities which are isolated, in which case $\D_{\sg}(Z)$ decomposes, up to direct summands, as a direct sum of categories $\D_{\sg}(\Spec(\cat{O}_{Z,x}))$ as $x$ ranges over the singular locus, and it therefore suffices to consider this local situation.

So let $(R,\mf{m},k)$ be a local Gorenstein $k$-algebra of Krull dimension $d$ with an isolated singularity, by which we mean that $R_{\mf{p}}$ is a regular local ring for every non-maximal prime ideal $\mf{p} \subseteq R$. The object of interest in this paper is the triangulated category
\[
\D_{\sg}(R) := \D^b(\modd R)/\K^b(\proj R)\,.
\]
The morphisms in this category are defined via a calculus of fractions, and it will be more convenient to work in a category of ``resolutions'', namely the homotopy category
\[
\cat{T} := \K_{\ac}(\free R)
\]
of acyclic complexes of finite free $R$-modules. The functor
\begin{gather*}
\cat{T} \lto \D_{\sg}(R)\\
X \mapsto \Coker(X^{-1} \lto X^0)
\end{gather*}
is an equivalence of triangulated categories \cite[(4.4.1)]{Buchweitz}, so we are justified in working exclusively in the category $\cat{T}$. We note that cokernels of the differentials in complexes in $\cat{T}$ are precisely the \emph{maximal Cohen-Macaulay $R$-modules}, and the stable category $\underline{\CM}(R)$ of these modules is equivalent to $\cat{T}$. The stable category is a classical object of singularity theory; for example, results of Kn\"orrer \cite{Knorrer} and Buchweitz-Greuel-Schreyer \cite{BGS87} characterise the simple hypersurface singularities in terms of the structure of $\underline{\CM}(R)$.

A fundamental theorem of Auslander \cite{Auslander78} states that $\cat{T}$ has a Serre functor $\serre = (-)[d-1]$. There is another proof due to Buchweitz, who points out \cite[\S 7.7]{Buchweitz} that it would be interesting to have a closed formula for the corresponding pairing. There was little progress on this question until 2003, when the mathematical physicists Kapustin and Li derived a formula for the pairing in the singularity category of a hypersurface $\{ W = 0 \} \subseteq \mathbb{C}^n$ with an isolated singularity. More precisely, they found a formula for disc correlators in the B-twisted supersymmetric Landau-Ginzburg model with target space $\mathbb{C}^n$ and potential $W$ \cite{KapustinLi, Herbst05}. The singularity category of the hypersurface appears as a category of boundary conditions in this model, so their formula for the disc correlator gave a strong candidate for the duality pairing. However, it remained an open question how to prove that this candidate pairing was actually nondegenerate. 
\\

In the rest of this introduction we state our general formula for a nondegenerate pairing on the morphism spaces of $\cat{T}$, and explain how this specialises to the Kapustin-Li formula for hypersurfaces.

Fix a complex $X \in \cat{T}$ with differential $\partial$. The punctured spectrum $U = \Spec(R) \setminus \{ \mf{m} \}$ is regular by hypothesis, so the restriction $X|_U$ is contractible. We can therefore choose a cover of $U$ by open sets $D(t_1),\ldots,D(t_d)$, or what is the same, choose a regular sequence $\bs{t} = (t_1,\ldots,t_d)$ in the maximal ideal, such that $t_i$ acts null-homotopically on $X$ for $1 \le i \le d$. Choose a homotopy $\lambda_i$ on $X$ with
\[
\lambda_i \circ \partial + \partial \circ \lambda_i = t_i \cdot 1_X\,.
\]
Let $(-1)^F$ be the grading operator on $X$ which sends a homogeneous element $x \in X$ to $(-1)^{|x|} x$. Given $\alpha \in \cat{T}(X, X[d-1])$ we consider the following degree zero $R$-linear operator on $X$
\begin{equation}\label{eq:defnptr0}
\holn_\alpha := (-1)^F \alpha \circ \lambda_1 \cdots \lambda_d \circ \partial\,.
\end{equation}
The $R$-module $X$ is certainly not finitely generated, so $\holn_\alpha$ does not have a trace in the usual sense. But in each degree $i$ we can take the trace of the endomorphism $\holn^i_{\alpha}$ of $X^i$, and the class
\begin{equation}\label{eq:defnptr}
\plangle \alpha \prangle :=  (-1)^{\binom{d+1}{2}}\begin{bmatrix} \tr( \holn^i_\alpha ) \\ t_1, \ldots, t_d \end{bmatrix} \in H^d_{\mf{m}}(R) 
\end{equation}
in local cohomology is independent of all choices: the integer $i$, the system of parameters $\bs{t}$, and the null-homotopies $\lambda_j$. Here we use the notation of generalised fractions, which is recalled in Section \ref{section:cinjres}. In short: there is an isomorphism of local cohomology with \v{C}ech cohomology $H^d_{\mf{m}}(R) \cong \check{H}^{d-1}(U, \cat{O}_U)$ for $d > 0$ which identifies $\plangle \alpha \prangle$, up to a sign, with the \v{C}ech cocycle $[\tr(\holn^i_\alpha)/(t_1 \cdots t_d)]$. 

There is a $k$-linear map $\zeta: H^d_{\mf{m}}(R) \lto k$ such that composing with $\zeta$ defines an isomorphism
\[
\Hom_R(M, H^d_{\mf{m}}(R)) \lto \Hom_k(M,k)
\]
for any finite-length $R$-module $M$. The value of $\zeta$ on a generalised fraction should be thought of as a residue; indeed, if $R$ is given as a quotient of a power series ring by a regular sequence then $\zeta$ can be defined explicitly in terms of residues over the power series ring.

With this notation, our main theorem is the following:

\begin{theoremn} There is a nondegenerate pairing
\[
\langle -, - \rangle: \cat{T}(Y,X[d-1]) \otimes_k \cat{T}(X,Y) \lto k
\]
natural in both variables and compatible with suspension, defined by
\[
\langle \psi, \phi \rangle = \zeta \plangle \psi \circ \phi \prangle = (-1)^{\binom{d+1}{2}} \zeta\! \begin{bmatrix} \tr\left( \psi \circ \phi \circ \lambda_1 \cdots \lambda_d \circ \partial \right)^0 \\ t_1, \ldots, t_d \end{bmatrix}\,.
\]
\end{theoremn}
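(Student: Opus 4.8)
The plan is to establish four things in turn: that the formula descends to a well-defined map on the morphism spaces of $\cat{T}$; that it is $k$-bilinear and depends on $\psi$ and $\phi$ only through the composite $\psi\circ\phi$; that it is natural in both variables and compatible with suspension; and, finally, that it is nondegenerate. The first three are formal manipulations with the graded trace, and essentially all of the substance lies in the last step. For the first two, observe that $\holn_\alpha$ and hence $\plangle\alpha\prangle$ is visibly additive in its argument, and that the null-homotopies $\lambda_j$, the grading operator $(-1)^F$, and the regular sequence $\bs t$ may be chosen compatibly with finite direct sums. Together with the independence statement recalled after~\eqref{eq:defnptr} (which in particular makes $\plangle\alpha\prangle$ depend only on the homotopy class of $\alpha$), this shows that $\alpha\mapsto\zeta\plangle\alpha\prangle$ is a well-defined $k$-linear map $\cat{T}(X,X[d-1])\to k$, and therefore that $\langle\psi,\phi\rangle=\zeta\plangle\psi\circ\phi\prangle$ is a well-defined $k$-bilinear form on $\cat{T}(Y,X[d-1])\otimes_k\cat{T}(X,Y)$ depending only on $\psi\circ\phi$.

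Because the pairing is a function of the composite alone, naturality in $Y$ is immediate: for $b\colon Y\to Y'$ both $\langle\psi\circ b,\phi\rangle$ and $\langle\psi,b\circ\phi\rangle$ equal $\zeta\plangle\psi\circ b\circ\phi\prangle$. Naturality in $X$, for a morphism $a\colon X'\to X$, reduces to the cyclicity identity $\plangle a[d-1]\circ\beta\prangle=\plangle\beta\circ a\prangle$ for $\beta\colon X\to X'[d-1]$; I would prove it by computing both classes with a common regular sequence $\bs t$ and null-homotopies for the $t_j$ on $X$ and on $X'$, using that the chain map $a$ intertwines these up to terms anticommuting with the differential, and invoking cyclicity of the degreewise matrix trace to transport $a$ around the product $\lambda_1\cdots\lambda_d\circ\partial$ at the cost of terms of vanishing trace. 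Compatibility with suspension is checked by tracking signs: replacing $X$ by $X[1]$ negates $\partial$, the grading operator $(-1)^F$, and each $\lambda_j$, and one verifies that the sign $(-1)^{\binom{d+1}{2}}$ together with the operator in \eqref{eq:defnptr0} is left invariant, in accordance with the Bondal--Kapranov suspension axiom.

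For nondegeneracy I would first reduce to the diagonal case $Y=X$. Computing the invariant $\plangle-\prangle$ for $X\oplus Y$ with block-diagonal data, $\holn_\alpha$ is block-diagonal on the diagonal blocks of $\alpha\colon X\oplus Y\to(X\oplus Y)[d-1]$ and strictly off-diagonal otherwise, so $\plangle\alpha\prangle_{X\oplus Y}=\plangle\alpha_{XX}\prangle_X+\plangle\alpha_{YY}\prangle_Y$; expanding a composite $\psi\circ\phi$ into blocks then shows that the pairing for $X\oplus Y$ is the orthogonal direct sum of the four pairings on $\cat{T}(j,i[d-1])\otimes_k\cat{T}(i,j)$ with $i,j\in\{X,Y\}$, one of which is exactly $\langle-,-\rangle$ for the pair $(X,Y)$. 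Hence it suffices to prove nondegeneracy of the self-pairing for every object $W$. Writing $M=\Coker(W^{-1}\to W^0)$ and $\tau_W=\zeta\plangle-\prangle$, and using that Auslander's theorem already supplies the equality $\dim_k\cat{T}(W,W[d-1])=\dim_k\cat{T}(W,W)$ (from the Serre duality isomorphism $\cat{T}(W,W)^*\cong\cat{T}(W,W[d-1])$), the self-pairing is nondegenerate precisely when the map $\psi\mapsto\tau_W(\psi\circ-)$ from $\underline{\Hom}_R(M,M[d-1])$ to $\Hom_k(\underline{\End}_R(M),k)$ is injective; that is, when the trace functional $\tau_W$ pairs $\underline{\Hom}_R(M,M[d-1])$ perfectly against $\underline{\End}_R(M)$ under composition.

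This last statement is where the explicit complete injective resolutions do the real work, and it is the main obstacle. The plan is: for each maximal Cohen--Macaulay module $M$, construct a complete injective resolution and compare it with the complete projective resolution $W$ to obtain an explicit model of the Auslander--Serre duality in $\underline{\CM}(R)$ --- concretely, an identification of $\underline{\Hom}_R(M,M[d-1])$ with $\Hom_k(\underline{\End}_R(M),k)$ built from local duality, since $H^d_{\mf m}(R)$ is the injective hull of $k$ (as $R$ is Gorenstein of dimension $d$) and $\zeta$ implements Matlis duality on finite-length modules --- and then compute the class $\plangle\psi\prangle$ directly from the resolution data, unwinding the operator $\holn_\psi=(-1)^F\psi\circ\lambda_1\cdots\lambda_d\circ\partial$ and the generalised fraction into the \v{C}ech/residue expression afforded by the isomorphism $H^d_{\mf m}(R)\cong\check{H}^{d-1}(U,\cat{O}_U)$. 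The hard part will be showing that this trace-and-generalised-fraction recipe reproduces exactly the canonical local-duality pairing; the isolated-singularity hypothesis enters throughout, guaranteeing that $W|_U$ is contractible (so the $\lambda_j$ exist) and that $\underline{\End}_R(M)$ has finite length. Granting the identification, nondegeneracy of $\langle-,-\rangle$ follows from that of Matlis duality. Finally, for a hypersurface $R=S/(W)$ the complete injective resolution is two-periodic, and the computation collapses to the residue formula of Kapustin and Li, yielding the specialisation promised in the introduction.
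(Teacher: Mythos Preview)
Your plan and the paper's agree on the decisive ingredient: one must construct an explicit complete injective resolution of a maximal Cohen--Macaulay module (in the form $M\to T\otimes H^d_{\mf m}(R)[1-d]$, built from the null-homotopies $\lambda_i$ and the stable Koszul complex), and then identify the generalised-fraction formula with the canonical local-duality pairing. So the heart of your proposal is correct and matches the paper.

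The organisation, however, is inverted relative to the paper, and this costs you work. The paper first establishes the abstract duality isomorphism $\cat{T}(Y,X[d-1])\cong\Hom_R(\cat{T}(X,Y),H^d_{\mf m}(R))$ as a chain of explicit natural isomorphisms (tensor--Hom adjunctions, Matlis' $\End_R(H^d_{\mf m}(R))\cong\widehat R$, and the universal property of the complete injective resolution), and \emph{then} simply evaluates this chain on $(\psi,\phi)$ to read off the formula. Nondegeneracy, naturality in both variables, independence of $i$, $\bs t$, and the $\lambda_j$, and compatibility with suspension all come for free from the fact that the formula is the value of a canonically defined natural isomorphism. By contrast, you first attempt to verify well-definedness, cyclicity, and suspension-compatibility by hand, then reduce nondegeneracy to the diagonal via $X\oplus Y$, and only then invoke the complete injective resolution to match with the canonical pairing. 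Once you carry out that last step, all the direct verifications become redundant; in particular the cyclicity argument you sketch (intertwining null-homotopies on $X$ and $X'$ across a chain map $a$) is delicate and unnecessary --- in the paper it is a one-line consequence of naturality of the abstract isomorphism (Lemma~\ref{lemma:ptrproperties}). The reduction to the diagonal is likewise not needed, since the identification in Theorem~\ref{theorem:genfracformula} works directly for arbitrary $X,Y$. Note also that the ``independence statement recalled after~\eqref{eq:defnptr}'' that you invoke early on is, in the paper's logic, a \emph{consequence} of the identification you defer to the end, not an input; citing it up front risks circularity unless you supply an independent proof.
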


Let us now consider hypersurfaces, where $\langle \psi, \phi \rangle$ agrees with the pairing of Kapustin and Li.
\\

\textbf{Matrix factorisations.} Suppose now that $R$ is a hypersurface singularity, that is, $R = S/(W)$ where $S = k[[x_1,\ldots,x_n]]$ and $W$ is a polynomial whose zero locus in $\mathbb{A}^n_k$ has an isolated singularity at the origin. In this case it is a theorem of Eisenbud \cite{Eisenbud80} that every acyclic complex of finite free $R$-modules is two-periodic (up to homotopy equivalence) and there is an alternative presentation of the category $\cat{T}$ which makes use of this additional symmetry. 

A \emph{matrix factorisation} of $W$ over $S$ is a $\Ztwo$-graded free $S$-module of finite rank $X = X^0 \oplus X^1$ together with an $S$-linear map $d: X \lto X$ of degree one with $d^2 = W \cdot 1_X$. The map $d$ is referred to as the differential. A morphism of matrix factorisations is an $S$-linear map of degree zero which commutes with the differentials. There is an obvious notion of homotopy, using which we define the homotopy category $\hmf(S,W)$ of matrix factorisations of $W$ over $S$. If $X$ is a matrix factorisation then the differential on $X \otimes_S R$ actually squares to zero, and by the \emph{periodification} of this $\Ztwo$-graded complex we mean the $\ZZ$-graded complex of $R$-modules
\[
\xymatrix{
\cdots \ar[r] & X^1 \otimes_S R \ar[r]^-{d^1 \otimes 1} & X^0 \otimes_S R \ar[r]^-{d^0 \otimes 1} & X^1 \otimes_S R \ar[r]^-{d^1 \otimes 1} & X^0 \otimes_S R \ar[r] & \cdots
}
\]
with $X^0 \otimes_S R$ in degree zero. Abusing notation, we denote this complex by $\perd{X}$. This construction is functorial, and one can show that $\perd{X}$ is an acyclic complex of finite free $R$-modules and that this defines an equivalence of triangulated categories $\hmf(S, W) \lto \cat{T}$. 

Let us write $\m = \hmf(S,W)$. Induced by this equivalence and the duality structure given on $\cat{T}$ above, there is for $X,Y \in \m$ a nondegenerate pairing
\begin{equation}
\langle -, - \rangle: \m(Y, X[n]) \otimes_k \m(X,Y) \lto k
\end{equation}
natural in both variables and compatible with suspension. By choosing the system of parameters $\bs{t}$ and the null-homotopies $\lambda_i$ appropriately, the pairing can be put in the form
\begin{equation}\label{eq:klformulafinalfinal}
\langle \psi, \phi \rangle = \frac{1}{n!}(-1)^{\binom{n-1}{2}} \Res_{S/k}\!\begin{bmatrix} \str_S\!\left(\psi \circ \phi \circ \ud_{S/k}(d_X)^{\wedge n}\right) \\ \partial_1 W, \ldots, \partial_n W \end{bmatrix}\,.
\end{equation}
In this paper we use the residue symbols of Grothendieck \cite{ResiduesDuality} which are defined algebraically, but for $k = \mathbb{C}$ these residues agree with the usual analytic residues defined by integration and, modulo the sign, (\ref{eq:klformulafinalfinal}) is the pairing derived by Kapustin and Li \cite{KapustinLi}. Recently Ed Segal \cite{Segal09} found a derivation of this formula via Hochschild homology of curved dg-algebras, and Carqueville \cite{Carqueville} gave another derivation using the theory of minimal models for $A_\infty$-categories, but neither of these approaches gives a proof of nondegeneracy. We note that the Kapustin-Li pairing has been used in connection with Khovanov and Rozansky's $\mathfrak{sl}(N)$ link homology \cite{Mackaay09}.

We conclude with a sketch of how this pairing arises in mathematical physics, following \cite{KapustinLi} and \cite{Herbst05}. In the B-type supersymmetric topological Landau-Ginzburg model with worldsheet $M$, flat target space $Z = \mathbb{C}^n$ and potential $W \in \mathbb{C}[x^1,\ldots,x^n]$, the bosonic fields are the components\footnote{We use $\mu$ rather than the more standard $\phi$ to avoid a clash of notation with the above.} $\mu^i, \mu^{\bar\imath}$ of maps $\mu: M \lto Z$. For simplicity we are going to assume that $M$ is a disc with boundary $C = \partial M$ and we will omit the fermionic fields $\eta, \theta$ and $\rho$ from the discussion. 

Following a suggestion of Kontsevich it was explained by Kapustin and Li \cite{Kapustin03} how some matrix factorisations of $W$ appear naturally as boundary conditions in this model, and it was later understood how to introduce arbitrary matrix factorisations \cite{Laz03}. Let us take this as our starting point, and fix a matrix factorisation $X$ of $W$ together with a connection. A vector bundle together with a connection is a gauge field, and in the approach of \cite{Laz03} (following \cite{Witten92}) one couples the bulk theory to this gauge field by introducing a boundary term $\mathcal{U}$ in the partition function
\begin{equation}\label{eq:partitionfunction}
Z = \int \mathcal{D} \Psi \, \exp(- \widetilde{S}_{\text{bulk}}) \, \mathcal{U}\,.
\end{equation}
Here $Z$ is a path integral over the space of all fields $\Psi$, and $\widetilde{S}_{\text{bulk}} = \widetilde{S}_{\text{bulk}}(\Psi)$ is the bulk action. To define $\mathcal{U}$, consider pulling back the vector bundle $X$ with its connection $A$ via $\mu$ to the boundary circle $C$, and taking the holonomy of the connection around this loop; then $\mathcal{U}$ is the supertrace
\[
\mathcal{U} = \str\left( P \exp\left( - \oint_C \ud \tau \, M \right) \right)
\]
where $M$ is a matrix built from the differential $d_X$ on $X$ and connection $A$ (see \cite[(4.17)]{Herbst05}), $\ud \tau$ is the length element along $C$ and $P \exp$ is the path-ordered exponential. Note that the entries of $d_X, A$ are polynomials in the $x^i$, so pulling back to $C$ amounts to substituting $\mu^i + i \mu^{\bar\imath}$ for $x^i$.

The disc correlator of interest to us is defined by an integral similar to (\ref{eq:partitionfunction}). Let $\alpha$ be a degree $d-1$ endomorphism of $X$ in the category of matrix factorisations. Pulled back to $C$ this is a matrix of polynomials in the $\mu^i, \mu^{\bar\imath}$. In the quantum field theory the fields $\mu^i, \mu^{\bar\imath}$ are promoted to operators $\mu^i(\tau), \mu^{\bar\imath}(\tau)$ labeled with points $\tau$ of the worldsheet. Given $\tau \in C$ these operators are substituted into $\alpha$ in order to define the corresponding boundary observable as the matrix of operators
\[
\cat{O}_\alpha(\tau) = \alpha( \mu(\tau) )\,.
\]
The disc correlator $\langle \cat{O}_\alpha(\tau) \rangle$ is defined by
\begin{equation}\label{eq:picture_kaplicorrelator}
\left\langle
\begin{tikzpicture}[scale=0.6,baseline]
\draw[very thick] (0,0) circle (2cm);
\nicecolourscheme (0,0) circle (2cm);
\draw (0,0) node {{$M$}};
\draw (180:2.7cm) node {{$\cat{O}_\alpha$}};
\filldraw [black] (180:2cm) circle (3pt);
\draw (2.6,0) node {{$X$}};
\end{tikzpicture}
\right\rangle = \int \mathcal{D} \Psi \, \exp(- \widetilde{S}_{\text{bulk}}) \, \str\!\big[ H(\tau) \cat{O}_\alpha(\tau) \big]
\end{equation}
where $H(\tau)$ is the superholonomy operator. After some careful argument, one shows that this path integral localises to a finite-dimensional integral on the target space $Z$, and this integral is precisely the residue (\ref{eq:klformulafinalfinal}) with $\alpha = \psi \circ \phi$. Note that the field theory is topological, so the correlator does not depend on the position $\tau$ where $\cat{O}_\alpha$ is inserted.
\\

\textbf{Outline.} We begin in Section \ref{section:outline} with a proof of Auslander's duality in the singularity category which is adapted to finding an explicit formula for the corresponding nondegenerate pairing. In Section \ref{section:cinjres} we construct the explicit complete injective resolutions required to actually make the pairing explicit. The main theorem of the paper, quoted above, is proven in Section \ref{section:duality}. Finally in Section \ref{section:mfs} we specialise to hypersurfaces and discuss the Kapustin-Li formula.
\\

\emph{Acknowledgements.} It is a pleasure to thank Bernhard Keller, Amnon Neeman, Henning Krause, Andreas Recknagel, Ragnar Buchweitz, Joseph Lipman, Tony Pantev, Jesse Burke and Hailong Dao for enlightening discussion on the subject of this paper. I am grateful to Nils Carqueville for sharing his Singular algorithms and many detailed comments on the draft, to Srikanth Iyengar for helping with the proof of Lemma \ref{lemma:special_reg_seq_exists}, and to Igor Burban for his encouragement and for introducing me to the paper \cite{KapustinLi}. I owe special thanks to Tobias Dyckerhoff, who pointed out to me that the perturbation arguments used here could be phrased in terms of the standard homological perturbation lemma; this has allowed the current version of the paper to be much shorter!

\section{Preliminaries}\label{section:prelim}

Let $k$ be a field and $\cat{T}$ a $k$-linear triangulated category. A \emph{Serre functor} in $\cat{T}$ is a $k$-linear triangulated functor $\serre: \cat{T} \lto \cat{T}$ together with a family of $k$-linear isomorphisms
\[
\Lambda_{X,Y}: \cat{T}(Y, \serre X) \lto \Hom_k(\cat{T}(X, Y),k)
\]
which are natural in $X,Y$ and compatible with suspension, by which we mean that the diagram
\[
\xymatrix@C+3.5pc@R+0.5pc{
\cat{T}(Y[-1], \serre X) \ar[r]_-{\cong}^-{\Lambda_{X, Y[-1]}} & \Hom_k(\cat{T}(X, Y[-1]),k)\\
\cat{T}(Y, (\serre X)[1]) \ar[u]_{\cong}\\
\cat{T}(Y, \serre (X[1])) \ar[u]_{\cong} \ar[r]^-{\cong}_-{\Lambda_{X[1], Y}} & \Hom_k(\cat{T}(X[1], Y),k) \ar[uu]^{\cong}\,.
}
\]
commutes. Alternatively, we can present $\Lambda_{X,Y}$ as a family of nondegenerate pairings which satisfy conditions expressing the same naturality and compatibility with suspension.

If we say that a local ring $(R,\mf{m},k)$ is a $k$-algebra, it will be implicit that $k \lto R \lto R/\mf{m} = k$ is the identity. Unless specified otherwise all tensor products are $R$-linear.

For background on matrix factorisations see \cite{Yoshino90,Burban08} for commutative algebra, especially local cohomology, we recommend \cite{Herzog}, and for triangulated categories see \cite{Verdier96,NeemanBook}.

\section{Auslander's Duality}\label{section:outline}

In this section we give a new proof of Auslander's duality in the singularity category of a Gorenstein isolated singularity. The argument is predicated on the fundamental fact, proved in the next section, that we can explicitly construct complete injective resolutions. In organising the proof this way we hope to convince the reader of the utility of complete injective resolutions before getting down to the hard work of the construction in the next section. The reader who wants to read the full details in linear order should proceed to read Section \ref{section:cinjres} and then return here.

The setting is more general than the one adopted in the introduction: throughout $(R,\mf{m},k)$ is a local Gorenstein ring of Krull dimension $d$ with an isolated singularity. We do not assume that $R$ is a $k$-algebra, but we will remark on the additional features in this case as we go along. 

We have already introduced the equivalence of triangulated categories 
\begin{equation}\label{eq:kactodsing}
\cat{T} \lto \D_{\sg}(X)
\end{equation}
due to Buchweitz \cite[(4.4.1)]{Buchweitz}, where
\begin{equation}
\cat{T} = \K_{\ac}(\free R)\,.
\end{equation}
This equivalence sends $T$ to the cokernel of the differential $T^{-1} \lto T^0$. The inverse functor sends a module $M$, viewed as an object of the category $\D_{\sg}(R)$, to an acyclic complex of free modules usually called the complete free resolution. This complex will play a role analogous to that played by the projective resolution in the derived category: morphisms in $\D_{\sg}(R)$ are modeled by homotopy equivalence classes of maps between complete resolutions.

For convenience we choose to formalise the definition in the following way:

\begin{definition}\label{defn:cfreeres} A \emph{complete free resolution} of a finitely generated $R$-module $M$ is an acyclic complex of finite free $R$-modules $T$ together with a morphism $\rho: T \lto M$ of complexes of $R$-modules with the following properties:
\begin{itemize}
\item[(i)] $\rho$ \emph{is universal}: for any acyclic complex $F$ of free $R$-modules the induced map
\[
\Hom_R(F, \rho): \Hom_R(F, T) \lto \Hom_R(F, M)
\]
is a quasi-isomorphism.
\item[(ii)] $\rho$ \emph{truncates to a resolution}: the sequence $T^{-1} \lto T^0 \xlto{\rho^0} M \lto 0$ is exact.
\end{itemize}
The complete free resolution, if it exists, is unique up to homotopy equivalence and is denoted $\cf M$.
\end{definition}

\begin{remark}\label{remark:functor_cprojres} The complete free resolution is functorial: given a morphism $\alpha: M \lto N$ of finitely generated $R$-modules with complete free resolutions $\cf M \lto M$ and $\cf N \lto N$, there is a unique morphism $\cf(\alpha): \cf M \lto \cf N$ in $\K(R)$ making the diagram
\[
\xymatrix@C+2pc{
\cf M \ar[d]_{\cf(\alpha)}\ar[r] & M \ar[d]^\alpha\\
\cf N \ar[r] & N
}
\]
commute up to homotopy.
\end{remark}

It is easy to see that every object of $\D_{\sg}(R)$ is isomorphic to a finitely generated $R$-module~$M$ with the property that $\Ext^i_R(k,M) = 0$ for $i < d$. Equivalently, there is a sequence in $\mf{m}$ of length $d$ which is regular on $M$. Such modules are called \emph{maximal Cohen-Macaulay modules} (or CM modules, for short). Below we sketch how to construct the complete free resolution of such a module; if a CM module has finite projective dimension then it is projective, and in this case it will be clear that the complete free resolution is contractible. It follows that there is a functor $\cf(-)$ from $\D_{\sg}(R)$ to $\cat{T}$ which is, by definition, inverse to (\ref{eq:kactodsing}).

For complete details of the following construction see \cite[XII.3]{CartanEilenberg} and \cite{Buchweitz,Christensen00}. Let $M$ be a CM $R$-module. We begin with free resolutions of $M$ and its dual $\Hom_R(M,R)$:
\begin{gather*}
\cdots \lto F^{-1} \lto F^0 \lto M \lto 0\\
\cdots \lto Q^{-1} \lto Q^0 \lto \Hom_R(M,R) \lto 0\,.
\end{gather*}
Then one splices $F$ with $\Hom_R(Q,R)$ using the morphism
\begin{equation}\label{eq:constructed_cproj_ress}
\xymatrix{
F^0 \ar[r] & M \ar[r] & \Hom_R(\Hom_R(M,R),R) \ar[r] & \Hom_R(Q^0,R)
}
\end{equation}
to obtain a complex $T$ of finite free $R$-modules with $F^0$ in degree zero. By construction $M$ is the cokernel of the differential $T^{-1} \lto T^0$ and using the fact that $M$ is CM one argues that $T$ is acyclic. This is the desired complete free resolution of $M$.

\begin{lemma}\label{lemma:source_coprojres} Given $T \in \K_{\ac}(\free R)$ the module $M = \Coker(T^{-1} \lto T^0)$ is CM and the canonical map $T^0 \lto M$, viewed as a morphism of complexes $T \lto M$, is a complete free resolution of $M$.
\end{lemma}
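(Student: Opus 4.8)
The plan is to verify, for the canonical map $\rho\colon T\lto M$, the two conditions of Definition~\ref{defn:cfreeres}; the second is immediate, and the first I would reduce to a comparison with an already-constructed complete free resolution of $M$.

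Condition~(ii) is automatic: by definition $\rho^0$ is the cokernel map of $T^{-1}\lto T^0$, so acyclicity of $T$ makes $T^{-1}\lto T^0\xlto{\rho^0}M\lto 0$ exact. Acyclicity also forces $M$ to be CM: the exact sequence $0\lto M\lto T^1\lto T^2\lto\cdots$ exhibits $M$ as a $d$-th syzygy, and an iterated application of the depth lemma over the Cohen--Macaulay ring $R$ gives $\depth_R M=d$ (this is the fact already recalled in the introduction). For condition~(i), the first step is to observe that, $M$ being CM, it has a complete free resolution $\epsilon\colon\cf M\lto M$ by the construction recalled above; in particular $\cf M$ itself satisfies~(i) and~(ii). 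Truncating brutally in non-positive degrees turns both $T$ and $\cf M$ into ordinary free resolutions of $M$ (for $T$ by acyclicity, for $\cf M$ by its property~(ii)), so the usual comparison theorem for projective resolutions supplies a morphism of complexes $\tilde\gamma\colon T_{\le 0}\lto(\cf M)_{\le 0}$ lifting $1_M$, which is moreover a homotopy equivalence. I would then extend $\tilde\gamma$ to a morphism $\gamma\colon T\lto\cf M$ by induction up the positive degrees: at the $n$-th step the map that must be extended is already defined on $\operatorname{im}(T^{n-1}\lto T^n)\subseteq T^n$, and since the quotient $T^n/\operatorname{im}(T^{n-1}\lto T^n)\cong\operatorname{im}(T^n\lto T^{n+1})$ (using acyclicity) is maximal Cohen--Macaulay while $\cf M^n$ is free, the obstruction lies in $\Ext^1_R(\operatorname{im}(T^n\lto T^{n+1}),\cf M^n)$, which vanishes because $\Ext^{>0}_R(-,R)$ annihilates maximal Cohen--Macaulay modules over the Gorenstein ring $R$. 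By construction $\epsilon\circ\gamma=\rho$ on the nose.

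The crux is to show that $\gamma$ is a homotopy equivalence. Its mapping cone $\operatorname{cone}(\gamma)$ is an acyclic complex of finite free modules, hence an object of $\cat{T}$, and applying Buchweitz's triangulated equivalence $\cat{T}\xrightarrow{\sim}\D_{\sg}(R)$, $X\mapsto\Coker(X^{-1}\lto X^0)$, to the distinguished triangle $T\xrightarrow{\gamma}\cf M\lto\operatorname{cone}(\gamma)\lto T[1]$ identifies the image of $\operatorname{cone}(\gamma)$ with the cone of the morphism $M\lto M$ induced by $\gamma$ on zeroth cokernels. Since $\tilde\gamma$ lifts $1_M$, this induced morphism is $1_M$ itself, so $\operatorname{cone}(\gamma)$ maps to a zero object and is therefore contractible; thus $\gamma$ is a homotopy equivalence. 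Finally, for any acyclic complex $F$ of finite free modules the identity $\Hom_R(F,\rho)=\Hom_R(F,\epsilon)\circ\Hom_R(F,\gamma)$ exhibits $\Hom_R(F,\rho)$ as the composite of the homotopy equivalence $\Hom_R(F,\gamma)$ with the quasi-isomorphism $\Hom_R(F,\epsilon)$ (the latter being precisely property~(i) for $\cf M$), hence is itself a quasi-isomorphism; this is condition~(i). I expect the main obstacle to be the homotopy-equivalence statement for $\gamma$: it rests on the extension step, where the Gorenstein vanishing $\Ext^{>0}_R(\mathrm{MCM},R)=0$ is used, and on Buchweitz's equivalence, invoked to see that the mapping cone is zero.
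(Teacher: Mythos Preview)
Your argument is correct, but it takes a genuinely different route from the paper's. The paper argues directly with the truncation: setting $F=T_{\le 0}$, the cone of $T\to F$ is homotopy equivalent to the bounded-below complex $K=T_{\ge 1}[1]$ of free modules, and one checks that $\Hom_R(T',K)$ is acyclic for any acyclic complex $T'$ of free modules; the universal property then follows from the triangle $T\to F\to K\to T[1]$ together with the obvious quasi-isomorphism $\Hom_R(T',F)\simeq\Hom_R(T',M)$. By contrast, you bootstrap from the explicit splice construction of $\cf M$: you build a comparison map $\gamma\colon T\to\cf M$ over $1_M$, extend it into positive degrees using the Gorenstein vanishing $\Ext^{>0}_R(\mathrm{MCM},R)=0$, and then invoke Buchweitz's equivalence $\cat T\simeq\D_{\sg}(R)$ to conclude that $\gamma$ is a homotopy equivalence (its image being $1_M$). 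The paper's approach is more self-contained---it needs neither the prior construction of $\cf M$ nor the Buchweitz equivalence as input---while yours has the virtue of making transparent that any acyclic finite free complex is homotopy equivalent to the spliced resolution of its zeroth cokernel. One small slip: in your final sentence you restrict to acyclic complexes $F$ of \emph{finite} free modules, whereas Definition~\ref{defn:cfreeres}(i) requires arbitrary free modules; however, your deduction $\Hom_R(F,\rho)=\Hom_R(F,\epsilon)\circ\Hom_R(F,\gamma)$ goes through verbatim for any such $F$, since $\gamma$ is an honest homotopy equivalence and $\epsilon$ already enjoys the full property~(i).
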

\begin{proof}
This is the content of \cite[Lemma 3.6]{Jorgensen07}. The point is that brutally truncating $T$ in degrees $\le 0$ gives a free resolution $F$ of $M$, and the mapping cone of the truncation morphism $T \lto F$ is homotopy equivalent to a bounded below complex $K$ of free modules. If $T'$ is an acyclic complex of free modules then one checks that $\Hom_R(T',K)$ must be acyclic, from which the necessary universal property of $T \lto M$ follows.
\end{proof}

Next we explain why the morphism spaces in $\cat{T}$ have finite-length. By hypothesis, for each non-maximal prime ideal $\mf{p}$ the ring $R_{\mf{p}}$ is regular, and therefore every finitely generated $R_{\mf{p}}$-module has projective dimension $\le d$. If $T$ is an acyclic complex of finite free $R_{\mf{p}}$-modules then the brutal truncation in degrees $\le n$ serves as a free resolution of some module, and hence the modules $Z^i(T)$ are projective for $i \le n - d + 1$. Since $n$ is arbitrary, $T$ is contractible.

\begin{lemma}\label{lemma:homs_finite_length} For $X,Y \in \cat{T}$ the $R$-module $\cat{T}(X,Y)$ has finite length.
\end{lemma}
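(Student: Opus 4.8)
The plan is to reduce the statement to a local-cohomology-style vanishing that follows from the isolated-singularity hypothesis, exploiting the equivalence $\cat{T} \simeq \D_{\sg}(R)$ and the characterisation of morphisms there. First I would observe that, by the equivalence $\cat{T} \lto \D_{\sg}(R)$ together with Lemma \ref{lemma:source_coprojres}, it suffices to show that $\Hom_{\D_{\sg}(R)}(M, N)$ has finite length whenever $M, N$ are CM modules; equivalently, since $M = \Coker(X^{-1} \lto X^0)$ for $X \in \cat{T}$, it suffices to show $\cat{T}(X, Y)$ is $\mf{m}$-torsion and finitely generated over $R$. Finite generation is the easy half: brutally truncating $X$ gives an ordinary projective resolution of $M$ in low degrees, so for $n \gg 0$ one has $\cat{T}(X, Y)^{(n)} \cong \underline{\Hom}_R(\Omega^n M, \Omega^n N)$ (stable homomorphisms), a subquotient of an honest $\Hom$ between finitely generated modules, hence finitely generated over the noetherian ring $R$.

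The heart of the argument is to show that this $R$-module is annihilated by a power of $\mf{m}$. Here I would localise: for a non-maximal prime $\mf{p}$, the ring $R_{\mf{p}}$ is regular by the isolated-singularity hypothesis, so — exactly as in the paragraph preceding the lemma — every acyclic complex of finite free $R_{\mf{p}}$-modules is contractible. Since $\cat{T}(X, Y)$ commutes with localisation (the complexes are bounded-rank free, and $\Hom$ and homotopies localise), we get $\cat{T}(X, Y)_{\mf{p}} \cong \K_{\ac}(\free R_{\mf{p}})(X_{\mf{p}}, Y_{\mf{p}}) = 0$ because $X_{\mf{p}}$ is contractible. Thus $\cat{T}(X, Y)$ is a finitely generated $R$-module supported only at $\mf{m}$, and a finitely generated module with support in $\{\mf{m}\}$ has finite length. (Equivalently: such a module is killed by some power of $\mf{m}$ since $\sqrt{\operatorname{Ann}} = \mf{m}$, and $R/\mf{m}^n$ is artinian.)

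The step I expect to be the main obstacle — though it is really a bookkeeping point rather than a deep one — is verifying cleanly that morphism spaces in $\cat{T}$ localise, i.e. that the natural map $\cat{T}(X, Y)_{\mf{p}} \to \K_{\ac}(\free R_{\mf{p}})(X_{\mf{p}}, Y_{\mf{p}})$ is an isomorphism. This is where one uses that the terms $X^i, Y^i$ are \emph{finite} free modules, so that $\Hom_R(X^i, Y^j)_{\mf{p}} \cong \Hom_{R_{\mf{p}}}(X^i_{\mf{p}}, Y^j_{\mf{p}})$; one then passes this through the $\Hom$-complex and its cohomology, noting localisation is exact. Once that is in hand, everything else is formal. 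An alternative packaging, avoiding even this point, is to note directly that $\Ext^{\gg 0}_R(M, N)$ is annihilated by a power of $\mf{m}$ for CM modules over an isolated singularity (a standard consequence of the fact that $\Ext^i_R(M, N)$ is supported at $\mf{m}$ for $i > 0$, since $M$ is locally free of finite rank on the punctured spectrum) and that stable $\Hom$ agrees with high $\Ext$ up to shift; either route lands at the same conclusion.
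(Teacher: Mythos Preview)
Your proposal is correct and follows essentially the same route as the paper: show the morphism space is finitely generated and then kill it at every non-maximal prime using contractibility of $X_{\mf p}$. The paper's execution is slightly slicker on exactly the bookkeeping point you flag: rather than localising the full Hom-complex $\Hom_R(X,Y)$ (whose graded pieces are infinite products, where localisation is genuinely delicate), it first replaces $Y$ by the single module $N=\Coker(\partial_Y^{-1})$ via the complete-free-resolution property, so that $\cat{T}(X,Y)\cong H^0\Hom_R(X,N)$; now each term $\Hom_R(X^{-i},N)$ is finitely generated and localisation commutes on the nose, giving $\cat{T}(X,Y)_{\mf p}=H^0\Hom_{R_{\mf p}}(X_{\mf p},N_{\mf p})=0$ in one line. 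Your alternative packaging via stable Hom or high $\Ext$ is equivalent to this and avoids the issue just as well.
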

\begin{proof}
For $\mf{p}$ a non-maximal prime $\cat{T}(X,Y)_\mf{p} = H^0 \Hom_{R_\mf{p}}(X_{\mf{p}}, N_{\mf{p}}) = 0$ where $N = \Coker(\partial^{-1}_Y)$.
\end{proof}

If $R$ is a $k$-algebra then it follows that $\cat{T}(X,Y)$ is a finite-dimensional $k$-vector space, and duality in $\cat{T}$ can then formulated in terms of the dual $\Hom_k(\cat{T}(X,Y),k)$. In general we proceed differently, using the local cohomology module
\begin{equation}\label{eq:defn_localcohomology}
H^d_{\mf{m}}(R) := \varinjlim_i \Ext^d_R(R/\mf{m}^i, R)\,.
\end{equation}
Because $R$ is Gorenstein this is an injective envelope of $k$, and the Matlis dual $\Hom_R(-,H^d_{\mf{m}}(R))$ is therefore exact. If $R$ is a $k$-algebra then the Matlis dual agrees with the usual dual on the category of finite-length $R$-modules, that is, for any finite-length $R$-module $M$ there is a natural isomorphism
\begin{equation}\label{eq:firstoccurdualisingpair}
\Hom_R(M,H^d_{\mf{m}}(R)) \cong \Hom_k(M,k)\,.
\end{equation} 
We will have more to say about this isomorphism and its relation to residues in Section \ref{section:dualisingpairsyo}, but for now let us proceed to establish duality in $\cat{T}$ using the functor $\Hom_R(-,H^d_{\mf{m}}(R))$. 

As has already been mentioned, the next theorem gives a new proof of a result originally due to Auslander \cite[Proposition 8.8 in Ch.1 and Proposition 1.3 in Ch.3]{Auslander78}, of which there is another proof by Buchweitz \cite[Theorem 7.7.5]{Buchweitz}.

\begin{theorem}\label{theorem:ausdual} Given $X,Y \in \cat{T}$ there is an isomorphism of $R$-modules
\begin{equation}\label{eq:ausdual1}
\cat{T}(Y, X[d-1]) \cong \Hom_R(\cat{T}(X,Y),H^d_{\mf{m}}(R))
\end{equation}
which is natural in both variables and compatible with suspension, in the sense of Section \ref{section:prelim}.
\end{theorem}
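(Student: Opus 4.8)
\emph{The plan} is to deduce Auslander's isomorphism from Grothendieck local duality over the Gorenstein ring $R$, after presenting the two morphism spaces by means of explicit complete resolutions so that the $\Hom$- and $\otimes$-complexes that arise have cohomology of finite length and the duality can be carried out at the chain level.

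\emph{Reduction.} Pass to the maximal Cohen--Macaulay modules $M=\Coker(\partial^{-1}_X)$ and $N=\Coker(\partial^{-1}_Y)$. By Lemma~\ref{lemma:source_coprojres} the complexes $X$ and $Y$ \emph{are} the complete free resolutions $\cf M$ and $\cf N$, so the universal property of Definition~\ref{defn:cfreeres} gives functorial identifications $\cat{T}(X,Y[i])\cong H^i\Hom_R(X,N)$ and $\cat{T}(Y,X[i])\cong H^i\Hom_R(Y,M)$ for all $i$; write these Tate cohomology modules as $\widehat{\Ext}^i_R(M,N)$ and $\widehat{\Ext}^i_R(N,M)$. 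They have finite length by the argument of Lemma~\ref{lemma:homs_finite_length}, and on such modules $\Hom_R(-,H^d_{\mf m}(R))$ is an exact autoduality. So the theorem is equivalent to a natural isomorphism $\widehat{\Ext}^0_R(M,N)\cong\Hom_R(\widehat{\Ext}^{d-1}_R(N,M),H^d_{\mf m}(R))$, compatible with the syzygy shift.

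\emph{The duality.} Tate cohomology of $(M,N)$ is also computed by $H^{\ast}\Hom_R(M,\ci N)$, using a complete injective resolution $\ci N$ of $N$. The key feature of the construction in Section~\ref{section:cinjres} is that $\ci N$ is obtained \emph{literally} as a Matlis dual $\Hom_R(F,H^d_{\mf m}(R))$ of a concrete complex $F$ of finite free $R$-modules associated to $N$ via the Gorenstein $R$-duality $(-)^{\vee}=\Hom_R(-,R)$ on $\cat{T}$. Hence $\Hom_R(M,\ci N)\cong\Hom_R(M\otimes_R F,\,H^d_{\mf m}(R))$, and --- as $\Hom_R(-,H^d_{\mf m}(R))$ is exact --- on cohomology
\[
\widehat{\Ext}^i_R(M,N)\ \cong\ \Hom_R\!\big(H^{-i}(M\otimes_R F),\,H^d_{\mf m}(R)\big).
\]
It then remains to identify the complex $M\otimes_R F$. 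Expressing $F$ through the $R$-dual of $\cf N$ and invoking the identity $\Hom_R(\cf N,R)\simeq\cf(N^{\vee})[1]$ --- the reflection of $\Hom_R(\Omega N,R)\cong\Omega^{-1}\Hom_R(N,R)$, and the precise point at which the shift by $d-1$ rather than $d$ enters --- one rewrites $M\otimes_R F$, up to a shift, as a complex whose $j$-th cohomology is $\cat{T}(Y,X[j])=\widehat{\Ext}^j_R(N,M)$. Tracking the shifts yields $\widehat{\Ext}^i_R(M,N)\cong\Hom_R(\widehat{\Ext}^{d-1-i}_R(N,M),H^d_{\mf m}(R))$; $i=0$ is the claim.

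\emph{Naturality, suspension, and the main obstacle.} Naturality in $X$ and $Y$ is then automatic, every operation used being functorial; compatibility with suspension reduces to checking that the isomorphism intertwines $[1]$, a sign-careful verification against the diagram in Section~\ref{section:prelim}. I expect the genuine difficulties to be two. First, getting the single degree shift exactly right: this forces a careful treatment of the $R$-duality of complete resolutions and of its interaction with the syzygy functor, so that the answer is really $\cat{T}(Y,X[d-1])$ and not $\cat{T}(Y,X[d])$. Second --- and this is exactly what the explicit construction of $\ci N$ in Section~\ref{section:cinjres} is meant to supply --- one cannot run this argument using abstract derived functors alone, because an unbounded acyclic complex of finite free modules is in general neither $K$-projective nor $K$-flat, so the displayed isomorphisms must be established at the level of actual complexes; having $\ci N$ on the nose as $\Hom_R(F,H^d_{\mf m}(R))$ for an explicit finite-free $F$ is precisely what makes that possible.
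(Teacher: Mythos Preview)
Your approach is broadly correct and shares the decisive ingredient with the paper --- the complete injective resolution of $N=\Coker(\partial_Y^{-1})$ --- but the route is genuinely different. The paper does \emph{not} pass through the balance statement $\widehat{\Ext}^i_R(M,N)\cong H^i\Hom_R(M,\ci N)$ nor write $\ci N$ as a Matlis dual $\Hom_R(F,H^d_{\mf m}(R))$. Instead it begins on the dual side, with $\Hom_R(\Hom_R(X,Y),H^d_{\mf m}(R))$, replaces $Y$ by $N$, and then runs a chain of elementary tensor--Hom adjunctions to reach $\Hom_R(N,\,X\otimes H^d_{\mf m}(R))$. The complete injective resolution enters through its \emph{universal property} (Definition~\ref{defn:cinjres}): since $X\otimes H^d_{\mf m}(R)$ is an acyclic complex of injectives, one may replace $N$ by $Y\otimes H^d_{\mf m}(R)[1-d]$. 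The remaining passage back to $\Hom_R(Y,X[d-1])$ uses Matlis' theorem $\End_R(H^d_{\mf m}(R))\cong\widehat R$ together with the finite-length of the Hom spaces; your argument avoids this step entirely, at the cost of needing the Tate balance property (standard, but not proved in the paper) and a second Matlis dualisation at the end.

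Two smaller points. First, your account of where the shift by $d-1$ enters is a little off: in the paper it is the single shift $[1-d]$ in Theorem~\ref{thm:construct_cinjres}, arising as the composite of the degree-one map $i\colon N\to Y[1]$ and the augmentation $\gamma\colon\skos_\infty\to H^d_{\mf m}(R)[-d]$; rephrasing this as an identity $\Hom_R(\cf N,R)\simeq\cf(N^\vee)[1]$ obscures the $[-d]$ contribution from local cohomology. Second, the paper's route is deliberately chosen so that each step is an explicit chain-level isomorphism that can later be traced to produce the formula in Theorem~\ref{theorem:genfracformula}; your route, while perhaps conceptually cleaner, would require more work to extract the pairing in closed form.
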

\begin{proof}
Let us set $N = \Coker(Y^{-1} \lto Y^0)$ so that the canonical map $Y \lto N$ is a complete free resolution. There is a natural quasi-isomorphism ($\simeq$ denotes quasi-isomorphisms and $X^\dual$ is the dual complex $\Hom_R(X,R)$)
\begin{align*}
\Hom_R(\Hom_R(X,Y),H^d_{\mf{m}}(R)) &\simeq \Hom_R(\Hom_R(X, N),H^d_{\mf{m}}(R))\\
&\cong \Hom_R( X^\dual \otimes N, H^d_{\mf{m}}(R))\\
&\cong \Hom_R( N \otimes X^\dual, H^d_{\mf{m}}(R))\\
&\cong \Hom_R( N, \Hom_R( X^\dual, H^d_{\mf{m}}(R)))\\
&\cong \Hom_R( N, X^{\dual\dual} \otimes H^d_{\mf{m}}(R))\\
&\cong \Hom_R( N, X \otimes H^d_{\mf{m}}(R))\,.
\end{align*}
At this point we make use of two facts whose proofs will be given in the next section. The first is that $X \otimes H^d_{\mf{m}}(R)$ is an acyclic complex of injectives, and the second is that for any acyclic complex of injectives $I$ there is a quasi-isomorphism
\[
\Hom_R(N, I) \lto \Hom_R(Y \otimes H^d_{\mf{m}}(R)[1-d], I)
\]
induced by a special morphism of complexes $N \lto Y \otimes H^d_{\mf{m}}(R)[1-d]$ called the \emph{complete injective resolution} of $N$. Taking this as a given, we may continue
\begin{align*}
\Hom_R(\Hom_R(X, Y),H^d_{\mf{m}}(R)) &\simeq \Hom_R( Y \otimes H^d_{\mf{m}}(R)[1-d], X \otimes H^d_{\mf{m}}(R))\\
&\cong \Hom_R( Y [1-d], \Hom_R(H^d_{\mf{m}}(R), X \otimes H^d_{\mf{m}}(R)))\\
&\cong \Hom_R( Y[1-d], X \otimes \End_R( H^d_{\mf{m}}(R) ) )\\
&\cong \Hom_R( Y[1-d], X \otimes \widehat{R} )\\
&\simeq \Hom_R( Y[1-d], X ) \otimes \widehat{R}\\
&\simeq \Hom_R( Y, X[d-1] )\,.
\end{align*}
Here $\widehat{R}$ is the $\mf{m}$-adic completion of $R$, and we have used a theorem of Matlis which states that $H^d_{\mf{m}}(R)$ is naturally a $\widehat{R}$-module and sending $r \in \widehat{R}$ to multiplication by $r$ on $H^d_{\mf{m}}(R)$ gives an isomorphism $\widehat{R} \cong \End_R( H^d_{\mf{m}}(R) )$. The last line follows from the fact that the Hom spaces in $\cat{T}$ have finite-length, and taking $H^0$ in the above gives the desired isomorphism in (\ref{eq:ausdual1}).

Naturality of this isomorphism is clear from the construction, and compatibility with suspension can be checked directly, but this is tedious; we will see a better proof in Lemma \ref{lemma:ptrproperties} below.
\end{proof}

If $R$ is a $k$-algebra then combining the theorem with (\ref{eq:firstoccurdualisingpair}) yields a natural isomorphism
\begin{equation}
\cat{T}(Y, X[d-1]) \cong \Hom_k( \cat{T}(X,Y), k )\,.
\end{equation}
Observe that every step in the proof of the theorem makes use of explicit standard isomorphisms, with the exception of the step involving the complete injective resolution, and if $R$ is a $k$-algebra the isomorphism (\ref{eq:firstoccurdualisingpair}). If we want explicit formulas we must therefore understand these maps. 

\section{Complete injective resolutions}\label{section:cinjres}

Let $(R,\mf{m},k)$ be a local Gorenstein ring of Krull dimension $d$ with an isolated singularity.

\begin{definition}\label{defn:cinjres} A \emph{complete injective resolution} of an $R$-module $M$ is an acyclic complex of injective $R$-modules $I$ together with a morphism of complexes $\vartheta: M \lto I$ which \emph{universal}, in the sense that for any acyclic complex $J$ of injective $R$-modules the induced map
\[
\Hom_R(\vartheta, J): \Hom_R(I, J) \lto \Hom_R(M, J)
\]
is a quasi-isomorphism \cite{Enochs}.

The complete injective resolution, if it exists, is unique up to homotopy equivalence and denoted $\ci M$. If $\alpha: M \lto N$ is a morphism of $R$-modules the induced morphism on the complete injective resolutions is denoted $\ci(\alpha)$.
\end{definition}

Let $M$ be a CM $R$-module with complete free resolution $\rho: T \lto M$. The differential on $T$ will be denoted $\partial$. Because the singularity of $R$ is isolated the complex $T_{\mf{p}}$ is contractible for every non-maximal prime $\mf{p}$. We are going to construct a complete injective resolution of $M$, essentially by making explicit the fact that $T$ is ``supported'' on the closed point, and for this we use the stable Koszul complex of local cohomology.

Let $\bs{t} = (t_1,\ldots,t_d)$ be a system of parameters for $R$, that is, a sequence of length $d$ generating an $\mf{m}$-primary ideal in $R$. Since $R$ is Gorenstein, this is the same as a regular sequence of length $d$ in $\mf{m}$ or a sequence $\bs{t}$ such that the open sets $D(t_i)$ cover the punctured spectrum $U = \Spec(R) \setminus \{ \mf{m} \}$. The \emph{stable Koszul complex} $\skos_\infty = \skos_\infty(\bs{t})$ is the tensor product
\begin{equation}
\skos_\infty := \bigotimes_{i=1}^d \left( \xymatrix{ \underline{R} \ar[r]^-{\can} & R[t_i^{-1}] \theta_i } \right)\,,
\end{equation}
where the underline indicates cohomological degree zero. Here the $\theta_i$ are formal variables of degree one, introduced to keep track of the grading. If we adopt the convention that the $\theta_i$'s anticommute and that $\theta_i^2 = 0$, then $\skos_\infty$ is the $\mathbb{Z}$-graded $R$-module
\[
\skos_\infty = \bigoplus_{i_1 < \cdots < i_p} R[t_{i_1}^{-1}, \ldots, t_{i_p}^{-1}] \theta_{i_1} \cdots \theta_{i_p}
\]
with the differential $\delta$ given by left multiplication with $\sum_i \theta_i$. If a prime ideal $\mf{p}$ is non-maximal~then $\mf{p} \in D(t_i)$ for some $i$, so $R_\mf{p} \lto R_\mf{p}[t_i^{-1}]$ is an isomorphism and $(\skos_\infty)_\mf{p}$ is contractible. Hence the cohomology of $\skos_\infty$ is supported on the closed point $\mf{m}$, and moreover the projection
\[
\varepsilon: \skos_\infty \lto R
\]
is the universal morphism in the unbounded derived category $\D(R)$ from a complex supported on the closed point to $R$. More precisely, if $Z$ is a complex whose cohomology is supported on $\mf{m}$ then any morphism $Z \lto R$ in the derived category factors uniquely through $\varepsilon$.

The local cohomology of $R$ is defined by  $H^i_{\mf{m}}(R) := H^i(\skos_\infty)$. The cone of the projection $\varepsilon$ is a shift of the \v{C}ech complex of $U$ and it follows that $H^d_{\mf{m}}(R) \cong H^{d-1}(U,\cat{O}_U)$ for $d > 0$. Since $R$ is Gorenstein we have $H^i_{\mf{m}}(R) = 0$ for $i < d$ and therefore an augmentation quasi-isomorphism
\[
\gamma: \skos_\infty \lto H^d_{\mf{m}}(R)[-d]\,,
\]
which amounts to an epimorphism $R[t_1^{-1},\ldots,t_d^{-1}] \theta_1 \cdots \theta_d \lto H^d_{\mf{m}}(R)$. Given $r \in R$ and integers $e_1,\ldots,e_d \ge 0$ one introduces the \emph{generalised fraction} \cite{Sharp82,Sharp82b}
\[
\begin{bmatrix} r \\ t_1^{e_1},\ldots,t_d^{e_d} \end{bmatrix} := (-1)^d \gamma\left(\frac{r \cdot \theta_1 \cdots \theta_d}{t_1^{e_1}\cdots t_d^{e_d}}\right) \in H^d_{\mf{m}}(R).
\]
We also use the notation $\big[ r \,/\, t_1^{e_1},\ldots,t_d^{e_d} \, \big]$. This can be viewed as a \v{C}ech cocycle, since
\begin{equation}\label{eq:cech_present_kos}
H^d_{\mf{m}}(R) = H\Big( \cdots \lto \oplus_{i_1 < \cdots < i_{d-1}} R[t_{i_1}^{-1},\ldots,t_{i_{d-1}}^{-1}] \lto R[t_1^{-1},\ldots,t_d^{-1}] \Big)\,.
\end{equation}
It is clear from the definition that acting on a generalised fraction $[ r \,/\, t_1^{e_1},\ldots,t_d^{e_d} ]$ with some $t_i$ has the effect of decreasing the exponent of $t_i$ in the denominator and that if $e_i = 1$ then $t_i \cdot [r\,/\, t_1^{e_1},\ldots,t_d^{e_d}] = 0$ in $H^d_{\mf{m}}(R)$. In the same way, we define generalised fractions in $H^d_{\mf{m}}(N)$ for any $R$-module $N$. 

In special cases the description of injective envelopes in terms of inverse polynomials goes back to Gabriel \cite{Gabriel}, Hartshorne \cite{Hartshorne} and Northcott \cite{Northcott}. Our presentation largely follows the one in Lipman's monograph \cite{Lipman84} or Kunz's recent book \cite{Kunz08}. See also \cite[\S 3 -- \S 4]{Lipman05}.

While $\skos_\infty(\bs{t})$ is defined using a specific system of parameters $\bs{t}$, it has a universal property in the derived category and therefore $H^d_{\mf{m}}(R)$ is independent of this choice, up to canonical isomorphism. We will have to manipulate generalised fractions in $H^d_{\mf{m}}(R)$ with denominators given by different regular sequences, and these can be related by the so-called transformation rule or transition formula contained in the next proposition. Note that if $\bs{s},\bs{t}$ are systems of parameters then for some $n > 0$ we have $(s_1^n, \ldots, s_d^n)R \subseteq (t_1,\ldots,t_d)R$.

\begin{proposition}\label{prop:transition_det} Let $\bs{t}, \bs{s}$ be systems of parameters such that $(s_1,\ldots,s_d)R \subseteq (t_1,\ldots,t_d)R$ with $s_i = \sum_j a_{ij} t_j$ $(a_{ij} \in R)$. Then as elements of $H^d_{\mf{m}}(R)$ we have
\[
\begin{bmatrix} 1 \\ t_1,\ldots,t_d \end{bmatrix} = \begin{bmatrix}\det(a_{ij}) \\ s_1,\ldots,s_d \end{bmatrix}.
\]
\end{proposition}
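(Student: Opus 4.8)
The plan is to realise the left-hand side as the image of $1\in R/I$ and the right-hand side as the image of $\det A\in R/J$ under the canonical maps into $H^d_\mf{m}(R)$ coming from the top Koszul cohomology of the two systems of parameters, and then to show these two images coincide because $1\in R/I$ maps to $\det A\in R/J$ under the natural transition map of the direct system computing local cohomology. Write $A=(a_{ij})$, so $\det A=\det(a_{ij})$, and put $I=(t_1,\ldots,t_d)$ and $J=(s_1,\ldots,s_d)$; by hypothesis $J\subseteq I$, and Cramer's rule $\det(A)\,\bs{t}=\operatorname{adj}(A)\,\bs{s}$ gives $\det(A)\,t_j\in J$ for every $j$, hence $\det(A)\cdot I\subseteq J$. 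Since $R$ is Gorenstein, hence Cohen--Macaulay, the finite Koszul complex $\skos_\bullet(\bs{u})$ attached to any system of parameters $\bs{u}$ is a free resolution of $R/(u_1,\ldots,u_d)$, self-dual (up to a shift by $d$) once a generator of its top exterior power is fixed, so there are canonical isomorphisms $\Ext^d_R(R/(u_1,\ldots,u_d),R)\cong R/(u_1,\ldots,u_d)$. Unwinding the definition of the augmentation $\gamma$ of $\skos_\infty(\bs{u})$, one checks that the canonical map $R/(u_1,\ldots,u_d)\cong\Ext^d_R(R/(u_1,\ldots,u_d),R)\lto H^d_\mf{m}(R)$ sends the class of $r$ to the generalised fraction $[\,r\,/\,u_1,\ldots,u_d\,]$, up to a sign independent of $\bs{u}$ (it comes only from the normalising $(-1)^d$ in the definition of the generalised fraction and from the fixed orientation).

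Next I would identify the relevant transition map. Since $J\subseteq I$, the surjection $R/J\twoheadrightarrow R/I$ induces a structure map $\Ext^d_R(R/I,R)\lto\Ext^d_R(R/J,R)$ in the direct system $H^d_\mf{m}(R)=\varinjlim_{\mf{a}}\Ext^d_R(R/\mf{a},R)$, compatible with the canonical maps to $H^d_\mf{m}(R)$. To compute it I would lift the quotient map to a chain map $\phi_A\colon\skos_\bullet(\bs{s})\lto\skos_\bullet(\bs{t})$ of free resolutions; the relations $s_i=\sum_j a_{ij}t_j$ let us take $\phi_A$ in homological degree one to be $e_i\mapsto\sum_j a_{ij}e_j$, and therefore $\phi_A$ on the top exterior power is multiplication by $\det A$. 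Dualising $\phi_A$ and using the two self-duality isomorphisms, one computes that the transition map $\Ext^d_R(R/I,R)\lto\Ext^d_R(R/J,R)$ becomes, under the identifications above, the multiplication-by-$\det A$ map $R/I\lto R/J$ --- which is well defined precisely because $\det(A)\cdot I\subseteq J$.

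Finally I would chase the element $1\in R/I$. On the one hand its image in $H^d_\mf{m}(R)$ under the canonical map is, up to the sign above, $[\,1\,/\,t_1,\ldots,t_d\,]$. On the other hand, since $J\subseteq I$ the canonical map from $R/I$ factors through the transition map computed in the previous step, so this image also equals the image of $\det A\in R/J$, which is, up to the same sign, $[\,\det A\,/\,s_1,\ldots,s_d\,]$. As the signs coincide, the asserted identity follows.

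The conceptual content here --- Cramer's rule together with the self-duality of the Koszul complex --- is classical, so I expect the main obstacle to be bookkeeping: fixing the self-duality isomorphism in each degree with a coherent orientation, checking that $\phi_A$ really does lift $R/J\twoheadrightarrow R/I$, and above all verifying that the definition of $[\,\cdot\,/\,\cdot\,]$ via the stable Koszul complex and its augmentation $\gamma$ matches the $\Ext$-colimit description with matching signs for both $\bs{t}$ and $\bs{s}$. An alternative that avoids the $\Ext$ formalism is to pass to the common refinement $\{D(t_i)\cap D(s_j)\}$ of the two \v{C}ech covers of $U=\Spec(R)\setminus\{\mf{m}\}$ and write down an explicit \v{C}ech coboundary relating the cocycles $1/(t_1\cdots t_d)$ and $\det(A)/(s_1\cdots s_d)$; this is self-contained but computationally heavier, and I would prefer the Koszul-comparison route.
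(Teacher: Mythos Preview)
The paper does not actually prove this proposition: its ``proof'' is a bare citation to \cite[Corollary~2.8]{Lipman87} and \cite[Theorem~4.18]{Kunz08}. Your proposal, by contrast, sketches a genuine argument, and the argument is the standard one: identify $\Ext^d_R(R/(\bs{u}),R)\cong R/(\bs{u})$ via Koszul self-duality, realise $H^d_\mf{m}(R)$ as the colimit of these Ext groups over $\mf{m}$-primary ideals, and compute that the transition map induced by $R/J\twoheadrightarrow R/I$ becomes multiplication by $\det A$ on the top exterior power. The use of Cramer's rule to see that $\det(A)\cdot I\subseteq J$, so that this multiplication is well-defined, is exactly right.

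Your own caveats are accurate: the only genuine work is the compatibility between the stable-Koszul/\v{C}ech description of generalised fractions used in the paper and the $\Ext$-colimit description, together with the orientation conventions for Koszul self-duality. These are routine but fiddly; once fixed consistently for both $\bs{t}$ and $\bs{s}$, the signs cancel as you say. The alternative \v{C}ech-refinement argument you mention is also valid and is closer in spirit to how the cited references proceed, but your Koszul-comparison route is cleaner and perfectly adequate here.
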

\begin{proof}
See for example \cite[Corollary 2.8]{Lipman87} or \cite[Theorem 4.18]{Kunz08}.
\end{proof}

Given $x \in T^0$ the image under the complete resolution $\rho: T \lto M$ is denoted $\overline{x} \in M$. We are now prepared to state the main theorem.

\begin{theorem}\label{thm:construct_cinjres} Let $\bs{t}$ be a system of parameters with $t_i \cdot 1_T$ null-homotopic for $1 \le i \le d$ and choose for each $i$ a homotopy $\lambda_i$ on $T$ with $\lambda_i \circ \partial + \partial \circ \lambda_i = t_i \cdot 1_T$. Then the morphism
\[
\vartheta: M \lto T \otimes H^d_{\mf{m}}(R)[1-d]
\]
defined by
\begin{equation}\label{eq:construct_cinjres}
\vartheta(\overline{x}) = (-1)^{\binom{d+1}{2}} \lambda_1 \cdots \lambda_d \circ \partial(x) \otimes \begin{bmatrix} 1 \\ t_1,\ldots,t_d \end{bmatrix}
\end{equation}
is a complete injective resolution. Up to homotopy $\vartheta$ is independent of the system of parameters $\bs{t}$ and homotopies $\lambda_i$.
\end{theorem}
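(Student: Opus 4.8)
The plan is to verify the three defining conditions in turn---that $I := T \otimes H^d_{\mf{m}}(R)[1-d]$ is an acyclic complex of injectives, that $\vartheta$ is a chain map, and that $\vartheta$ is universal in the sense of Definition~\ref{defn:cinjres}---and then treat independence of choices separately via a homotopy argument. That $I$ is a complex of injectives is immediate: $H^d_{\mf{m}}(R)$ is the injective envelope of $k$, so each $T^i \otimes H^d_{\mf{m}}(R)$ is a finite direct sum of copies of an injective, hence injective. Acyclicity of $I$ is the more delicate of the two ``facts promised in the previous section'': the point is that $T_{\mf p}$ is contractible for every non-maximal prime $\mf p$ (isolated singularity), so $T \otimes H^d_{\mf m}(R)$ is built from localisations at $\mf m$ where $T$ is already exact; concretely I would use the \v{C}ech presentation (\ref{eq:cech_present_kos}) of $H^d_{\mf m}(R)$ to write $T \otimes H^d_{\mf m}(R)$ as a subquotient of the \v{C}ech complex $T \otimes \skos_\infty$, note that $T \otimes \skos_\infty$ is exact because each $T[t_i^{-1}]$ is, and extract acyclicity of the top cohomology tensor factor from the Gorenstein vanishing $H^{<d}_{\mf m}(R) = 0$.

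Next, $\vartheta$ being a chain map is a direct computation with the homotopy formula $\lambda_i\partial + \partial\lambda_i = t_i$: one checks $\partial^I_I \circ \vartheta = 0$ on the image of $\rho$, i.e. that $\partial\bigl(\lambda_1\cdots\lambda_d\,\partial(x)\bigr)\otimes[1/t_1,\dots,t_d]$ vanishes in $I$. Here the key algebraic input is that $t_i\cdot[1/t_1,\dots,t_d] = 0$ in $H^d_{\mf m}(R)$ (the exponent-one relation recalled before Proposition~\ref{prop:transition_det}), combined with the fact that the $\lambda_i$ commute up to terms involving $\partial$; the sign $(-1)^{\binom{d+1}{2}}$ and the antisymmetry built into the wedge of the $\lambda_i$ conspire to kill the resulting terms. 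This is exactly the shape of computation that, as the acknowledgements hint, is cleanest when organised through the homological perturbation lemma: one views $\lambda_1\cdots\lambda_d$ as iterating a contracting homotopy and reads off that $\vartheta$ is the induced map on perturbed complexes.

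For universality I would argue as in the dual (complete free resolution) case: given an acyclic complex of injectives $J$, I must show $\Hom_R(I, J) \to \Hom_R(M, J)$ is a quasi-isomorphism, equivalently that $\Hom_R(\Coker(\vartheta)\text{-data}, J)$ is acyclic. The standard device is to factor $\vartheta$ through an ordinary injective resolution $M \to E$ (a bounded-below complex of injectives): the mapping cone of $E \to I$ should be homotopy equivalent to a bounded-\emph{above} complex of injectives $K$, and then $\Hom_R(K, J)$ is acyclic for any acyclic complex of injectives $J$ by a degreewise dévissage (each $\Hom_R(\text{injective}, J^\bullet)$ is acyclic since $J$ is, plus a convergence argument on the bounded-above filtration). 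This is precisely the mirror of the argument sketched for Lemma~\ref{lemma:source_coprojres}, and I expect to cite or adapt \cite{Enochs} for the general existence/uniqueness of $\ci M$ and just identify the explicit complex (\ref{eq:construct_cinjres}) with it.

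Finally, independence of $\bs t$ and the $\lambda_i$: uniqueness up to homotopy of complete injective resolutions (Definition~\ref{defn:cinjres}) already gives that \emph{any} two choices yield homotopy-equivalent complexes, so the content is that the \emph{explicit} formulas agree up to homotopy. For a change of homotopies $\lambda_i \rightsquigarrow \lambda_i'$ with the same $\bs t$, the difference $\lambda_i - \lambda_i'$ is itself chain-homotopic to zero in the appropriate sense, and one builds an explicit homotopy between the two $\vartheta$'s by a telescoping sum. For a change of system of parameters, one first reduces to the case $(s_1,\dots,s_d)R \subseteq (t_1,\dots,t_d)R$ (using that any two systems of parameters have a common refinement up to powers), and then the transition formula, Proposition~\ref{prop:transition_det}, converts $[1/s_1,\dots,s_d]$ into $[\det(a_{ij})/t_1,\dots,t_d]$ while the corresponding $\lambda$'s transform by the same matrix $(a_{ij})$, the determinant appearing because $\lambda_1\cdots\lambda_d$ is alternating in the $\lambda_i$; the two effects cancel. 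The main obstacle, I expect, is the acyclicity of $T \otimes H^d_{\mf m}(R)$ together with getting all the signs in the chain-map verification to come out consistently---everything else is either a citation or a bookkeeping exercise once the perturbation-lemma viewpoint is in place.
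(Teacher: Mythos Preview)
Your universality argument has a genuine gap. You propose to factor $\vartheta$ through an ordinary injective resolution $M \to E$ and argue that the mapping cone of the induced map $E \to I$ is homotopy equivalent to a bounded-above complex of injectives, by dualising Lemma~\ref{lemma:source_coprojres}. But in that lemma the bound is free: $F = T_{\le 0}$ is a brutal truncation of $T$, so the cone of $T \to F$ is \emph{by construction} concentrated in positive degrees. Here there is no a priori relationship between $E$ and $I = T \otimes H^d_{\mf m}(R)[1-d]$ in non-negative degrees; knowing that $I_{\ge 0}$ agrees (up to homotopy) with an injective resolution of $M$ is essentially equivalent to the universal property you are trying to prove. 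Citing \cite{Enochs} for abstract existence of $\ci M$ does not help either, since identifying the explicit $I$ with $\ci M$ is exactly the content of the theorem.

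The paper closes this gap with a different factorisation, in which the perturbation lemma does more work than you give it. One defines $\vartheta$ as the composite
\[
M \xrightarrow{\;i\;} T[1] \xrightarrow{\;\iota_\infty\;} T \otimes \skos_\infty(\bs t)[1] \xrightarrow{\;1 \otimes \gamma\;} T \otimes H^d_{\mf m}(R)[1-d],
\]
where $i(\overline{x}) = \partial(x)$, $\gamma$ is the augmentation to top local cohomology, and $\iota_\infty$ is produced by perturbing the deformation retract $(T, \partial) \leftrightarrows (T \otimes \skos_\infty, \partial \otimes 1)$ built from the contracting homotopies $t_{i_1}^{-1}\lambda_{i_1}$ on the localised summands. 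Computing $\gamma \circ \iota_\infty$ recovers the formula~(\ref{eq:construct_cinjres}), so the chain-map property is automatic rather than a separate calculation. Universality is then proved piecewise on $\Hom_R(-,J)$: the middle map is a homotopy equivalence by the perturbation lemma; $\Hom_R(i,J)$ is a quasi-isomorphism by an explicit lifting argument using projectivity of $T$ and injectivity of $J$ (Lemma~\ref{lemma:iisgood}); and $\Hom_R(1 \otimes \gamma, J)$ is handled by a truncation-and-cone argument exploiting that the cone of $\gamma$ is bounded acyclic (Lemma~\ref{lemma:tskosvs}). This factorisation also dispatches independence in one line: $i$, $\gamma$, and $\varepsilon$ are canonical, and any two homotopy inverses of $\varepsilon$ are homotopic, so the composite is canonical up to homotopy and no explicit comparison of formulas for different $\bs t$ or $\lambda_i$ is needed. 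Incidentally, the paper proves acyclicity of $T \otimes H^d_{\mf m}(R)$ by a short $\operatorname{Tor}$ argument (injectives over a Gorenstein ring have projective dimension $\le d$), which is cleaner than extracting it from the \v{C}ech complex.
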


The proof will occupy the rest of this section. In outline: since $M$ is the cokernel of the differential $\partial^{-1}$ there is a unique map $i: M \lto T^1$ with $i \circ \rho^0 = \partial^0$, and this defines a morphism of complexes $i: M \lto T[1]$. We will prove that the projection $\varepsilon: T \otimes \skos_\infty \lto T$ is a homotopy equivalence and produce, using the perturbation lemma, an explicit inverse $\iota_\infty$ involving the $\lambda_i$. The composite
\begin{equation}\label{eq:defnvartheta}
\xymatrix{
M \ar[r]^-{i} & T[1] \ar[r]^-{\iota_\infty} & T \otimes \skos_\infty [1] \ar[r]^-{\gamma} & T \otimes H^d_{\mf{m}}(R)[1-d]
}
\end{equation}
turns out to be described by (\ref{eq:construct_cinjres}), and we prove that it has the necessary universal property.

\begin{lemma} There exists a system of parameters $\bs{t}$ such that $t_i \cdot 1_T$ is null-homotopic for $1 \le i \le d$.
\end{lemma}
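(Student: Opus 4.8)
The plan is to exploit the fact that $T$ is contractible away from the closed point, so each element of $\mf{m}$ acts invertibly on the (vanishing) localisations of the endomorphism dg-algebra; the point is to promote this pointwise vanishing to a global null-homotopy witnessed by a \emph{single} sequence of $d$ elements, chosen moreover to be a system of parameters. First I would consider the graded endomorphism complex $E = \Hom_R(T,T)$, a complex of $R$-modules whose zeroth cohomology is $\cat{T}(T,T)$ and, more to the point, such that $H^j(E)$ for every $j$ has finite length: indeed for a non-maximal prime $\mf{p}$ we have $E_\mf{p} = \Hom_{R_\mf{p}}(T_\mf{p},T_\mf{p})$, which is acyclic since $T_\mf{p}$ is contractible, exactly as in the proof of Lemma \ref{lemma:homs_finite_length}. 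Thus the cohomology of $E$ is supported on $\{\mf{m}\}$ and in particular is annihilated by a power of $\mf{m}$.

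Next I would interpret "$t \cdot 1_T$ is null-homotopic" as "$t$ annihilates $H^0(E)$", or more precisely: a homotopy $\lambda$ with $\lambda\partial + \partial\lambda = t\cdot 1_T$ exists iff the cocycle $t\cdot 1_T \in Z^0(E)$ is a coboundary, i.e. iff $t$ kills the class $[1_T] \in H^0(E)$. So the naive requirement is only that each $t_i$ lie in the annihilator $\mf{a} := \operatorname{ann}_R[1_T] \subseteq R$; since $H^0(E) = \cat{T}(T,T)$ has finite length, $\mf{a}$ is $\mf{m}$-primary. Now the task reduces to a commutative-algebra statement: an $\mf{m}$-primary ideal $\mf{a}$ in a $d$-dimensional local ring contains a system of parameters, i.e.\ a sequence of $d$ elements generating an $\mf{m}$-primary ideal. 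This is the standard prime-avoidance argument: choose $t_1 \in \mf{a}$ avoiding all minimal primes of $R$ (possible as $\dim R = d > 0$ means $\mf{a} \not\subseteq$ any minimal prime, these being non-maximal), then inductively choose $t_{i+1} \in \mf{a}$ avoiding all minimal primes of $R/(t_1,\dots,t_i)$; after $d$ steps $(t_1,\dots,t_d)$ has height $d$ and hence is $\mf{m}$-primary, and since $R$ is Gorenstein (hence Cohen-Macaulay) this sequence is automatically regular, so it is a genuine system of parameters lying in $\mf{a}$.

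I would then assemble these: for each such $t_i \in \mf{a}$, since $t_i[1_T] = 0$ in $H^0(E) = Z^0(E)/B^0(E)$, the cocycle $t_i \cdot 1_T$ is a coboundary, so there exists $\lambda_i \in E^{-1} = \Hom_R(T,T)^{-1}$ with $\partial^{E}(\lambda_i) = t_i\cdot 1_T$, which unwinds to precisely $\lambda_i\circ\partial + \partial\circ\lambda_i = t_i\cdot 1_T$. (If one prefers to be careful about which differential on $E$, note $B^0(E)$ is exactly the set of null-homotopic endomorphisms, so this is a tautology once $t_i \in \mf{a}$.) This produces the desired $\bs{t}$ together with homotopies, though the lemma as stated only asks for the existence of $\bs{t}$.

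The main obstacle — really the only non-formal input — is the commutative-algebra step guaranteeing that the $\mf{m}$-primary ideal $\mf{a}$ contains a system of parameters; this is where the hypothesis $\dim R = d$ and the Gorenstein (Cohen-Macaulay) property are used, and it is presumably this that Srikanth Iyengar helped with, suggesting the statement may be phrased slightly differently (e.g.\ via a general lemma about regular sequences inside a given ideal acting on a finite-length-cohomology complex — cf.\ Lemma \ref{lemma:special_reg_seq_exists} referenced later). One should double-check the edge case $d = 0$: then $R$ is Artinian Gorenstein, every complex in $\cat{T}$ is contractible, and the empty sequence works vacuously, so the statement holds trivially. Everything else is bookkeeping: identifying null-homotopies with coboundaries in $\Hom_R(T,T)$, and observing that finite length of $\cat{T}(T,T)$ — already established — forces the annihilator of $1_T$ to be $\mf{m}$-primary.
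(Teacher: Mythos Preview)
Your proof is correct and follows exactly the paper's approach: the paper's one-line proof simply observes that $\cat{T}(T,T)$ has finite length (Lemma~\ref{lemma:homs_finite_length}), so its annihilator contains a system of parameters, and you have spelled out the details of this (the identification of null-homotopies with coboundaries, and the prime-avoidance construction of a system of parameters inside an $\mf{m}$-primary ideal).

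Two small corrections to side remarks, not affecting the argument: first, in the $d=0$ case it is \emph{not} true that every object of $\cat{T}$ is contractible (e.g.\ $R = k[x]/(x^2)$ with the periodic complex $\cdots \xrightarrow{x} R \xrightarrow{x} R \xrightarrow{x} \cdots$), but the lemma is still vacuously true since the empty sequence is a system of parameters and there is no condition to check; second, the acknowledgement to Iyengar refers to Lemma~\ref{lemma:special_reg_seq_exists} in the matrix factorisation section, not to this lemma.
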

\begin{proof}
The $R$-module $\cat{T}(T,T)$ of homotopy equivalence classes of self-maps of $T$ has finite-length by Lemma \ref{lemma:homs_finite_length}, so the annihilator contains a system of parameters.
\end{proof}

\begin{lemma} $T \otimes H^d_{\mf{m}}(R)$ is an acyclic complex of injective $R$-modules.
\end{lemma}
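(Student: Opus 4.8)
The statement splits into two claims --- that each term $T^i \otimes_R H^d_{\mf m}(R)$ is injective, and that the complex $T \otimes_R H^d_{\mf m}(R)$ is acyclic --- and I would dispatch them in turn. Write $E = H^d_{\mf m}(R)$; since $R$ is Gorenstein, $E$ is an injective $R$-module (it is the injective envelope of $k$, as recorded in Section \ref{section:outline}). Each $T^i$ is a finite free module, so $T^i \otimes_R E$ is a finite direct sum of copies of $E$ and is therefore injective over the noetherian ring $R$. That settles the first claim.

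For acyclicity I would use Matlis duality. For a finite free module $P$ the natural map $P \otimes_R E \to \Hom_R(\Hom_R(P,R), E)$, $p \otimes e \mapsto (f \mapsto f(p)e)$, is an isomorphism, and it is natural in $P$; applying it degreewise identifies the complex $T \otimes_R E$, up to the usual sign in the differential, with $\Hom_R(T^\dual, E)$ where $T^\dual = \Hom_R(T, R)$. Now $T^\dual$ is again an acyclic complex of finite free modules. Indeed the cocycle modules $Z^i(T) = \Ker(\partial^i) = \Im(\partial^{i-1})$ are, up to a shift, cokernels of differentials of $T$, hence maximal Cohen-Macaulay by Lemma \ref{lemma:source_coprojres}; over the Gorenstein ring $R$ a maximal Cohen-Macaulay module $C$ satisfies $\Ext^{>0}_R(C, R) = 0$, so each short exact sequence $0 \to Z^i(T) \to T^i \to Z^{i+1}(T) \to 0$ dualises to a short exact sequence, and splicing these back together exhibits $T^\dual$ as acyclic. (This is exactly the acyclicity of $X^\dual$ used without comment in the proof of Theorem \ref{theorem:ausdual}.) Finally $\Hom_R(-,E)$ is exact because $E$ is injective, so $\Hom_R(T^\dual, E) \cong T \otimes_R E$ is acyclic.

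A more conceptual variant, which makes plain that $T \otimes_R E$ is essentially a local cohomology of $T$: the stable Koszul complex $\skos_\infty$ presents $E$ as an object of finite flat dimension, and for a maximal Cohen-Macaulay module $C$ one has $C \otimes^{\mathbb L}_R E \simeq (C \otimes_R \skos_\infty)[d]$ in $\D(R)$, with cohomology $H^\bullet_{\mf m}(C)[d]$, concentrated in degree zero because $\depth C = \dim C = d$. Hence $\operatorname{Tor}^R_{>0}(C, E) = 0$, and tensoring the cocycle short exact sequences of $T$ with $E$ keeps them exact, so that they splice to show $T \otimes_R E$ acyclic. Either way, the one genuinely non-formal input is the vanishing $\Ext^{>0}_R(C,R) = 0$ (equivalently $\operatorname{Tor}^R_{>0}(C,E) = 0$) for maximal Cohen-Macaulay $C$ over a Gorenstein local ring --- the standard homological characterisation of such modules --- and I expect this, rather than any of the surrounding bookkeeping, to be the only point requiring care.
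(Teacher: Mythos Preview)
Your proof is correct, but the route differs from the paper's. The paper argues directly via Tor: for any injective $J$ over a Gorenstein ring $R$ one has $\operatorname{pd}_R(J) \le d$, so using the brutal truncation $T_{\le i}$ as a free resolution of $Z^{i+1}T$ gives
\[
H^j(T \otimes J) = \operatorname{Tor}^R_{i-j}(Z^{i+1}T, J) = 0 \quad \text{for } i - j > d\,,
\]
and since $i$ is arbitrary this forces $H^j(T \otimes J) = 0$ for every $j$. This argument never invokes the maximal Cohen--Macaulay property of the cocycles; the only non-formal input is that injectives over a Gorenstein ring have finite projective dimension. Your arguments, by contrast, both pivot on the fact that the cocycle modules of $T$ are maximal Cohen--Macaulay (whence $\Ext^{>0}_R(Z^iT, R) = 0$, or equivalently $\operatorname{Tor}^R_{>0}(Z^iT, E) = 0$). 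Your Matlis-duality route has the virtue of identifying $T \otimes E$ with $\Hom_R(T^\dual, E)$, which dovetails nicely with the chain of isomorphisms in the proof of Theorem \ref{theorem:ausdual}; the paper's route is a touch more self-contained and would work verbatim for any acyclic complex of finite free modules over a Gorenstein ring without appeal to Lemma \ref{lemma:source_coprojres}.
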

\begin{proof}
Since $H^d_{\mf{m}}(R)$ is injective, it suffices to prove that this complex is acyclic. The argument is standard: given an injective $R$-module $J$ and a pair of integers $j < i$ the brutal truncation $T_{\le i}$ is a free resolution of the module of cocycles $Z^{i+1} T$, so we have
\[
H^j( T \otimes J ) = H^j( T_{\le i} \otimes J ) = \textup{Tor}_{i-j}( Z^{i+1} T, J ) 
\]
which vanishes for $j < i - d$ since $J$ has projective dimension $\le d$ (here we use that $R$ is Gorenstein). We conclude that $T \otimes J$ is acyclic.
\end{proof}

From now on we suppose that a system of parameters $\bs{t}$ and sequence of null-homotopies $\{ \lambda_i \}_{i=1}^d$ has been chosen, and we construct the inverse to the projection $\varepsilon: T \otimes \skos_\infty \lto T$. The basic idea is to begin with a homotopy equivalence between $T \otimes \skos_\infty$ and $T$ with the differential $\delta$ on $\skos_\infty$ ``turned off'' and then perturb this differential back in while maintaning the homotopy equivalence.

\begin{definition}\label{defn:2perp} A \emph{deformation retract datum} of complexes of $R$-modules consists of a diagram
\[
\xymatrix@C+2pc{
(L,b) \ar@<-0.8ex>[r]_{\iota} & (M,b), \ar@<-0.8ex>[l]_p
} \quad h
\]
where $(L,b)$ and $(M,b)$ are complexes, $p$ and $\iota$ are morphisms of complexes, and $h$ is a degree one $R$-linear map $M \lto M$, which together satisfy the following two conditions:
\begin{itemize}
\item[(i)] $p\iota = 1$,
\item[(ii)] $\iota p = 1 + bh + hb$.
\end{itemize}
Notice that in particular $p$ is a homotopy equivalence with inverse $\iota$.
\end{definition}

Suppose we are given a deformation retract datum. A degree one $R$-linear map $\mu: M \lto M$ is a \emph{small perturbation} if $(b + \mu)^2 = 0$ and $(\mu h)^n = 0$ for all sufficiently large integers $n$. In this case
\[
(1 - \mu h)\sum_{n \ge 0} (\mu h)^n = 1
\]
so $1-\mu h$ is an isomorphism of graded $R$-modules, and we set
\[
A = (1 - \mu h)^{-1} \mu = \sum_{n \ge 0} (\mu h)^n \mu.
\]
Consider the following collection of data:
\begin{equation}\label{eq:new_perturb_fac}
\xymatrix@C+2pc{
(L,b) \ar@<-0.8ex>[r]_{\iota_\infty} & (M,b + \mu), \ar@<-0.8ex>[l]_{p}
} \quad h_\infty
\end{equation}
where
\begin{equation}\label{eq:new_perturb_fac2}
\iota_\infty = \iota + hA\iota, \quad h_\infty = h + hAh\,.
\end{equation}

The next result is known as the \emph{homological perturbation lemma} \cite{Shih62,Brown67,Gugenheim}.

\begin{theorem}\label{theorem:perp} If $\mu$ is a small perturbation and
\[
p h = 0, \quad p \mu = 0
\]
then (\ref{eq:new_perturb_fac}) is a deformation retract datum.
\end{theorem}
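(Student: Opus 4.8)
The plan is to verify directly the four requirements of Definition~\ref{defn:2perp} for the data (\ref{eq:new_perturb_fac}): that $p$ is a morphism of complexes $(M,b+\mu)\to(L,b)$, that $\iota_\infty$ is a morphism of complexes $(L,b)\to(M,b+\mu)$, that $p\iota_\infty = 1$, and that $\iota_\infty p = 1 + (b+\mu)h_\infty + h_\infty(b+\mu)$. The whole computation is controlled by a few algebraic identities for $A$, which I would record first. From $A = \sum_{n\ge0}(\mu h)^n\mu$ one reads off the recursions
\[
A = \mu + \mu hA, \qquad A = \mu + Ah\mu,
\]
and one checks that $1-\mu h$ and $1-h\mu$ are invertible with inverses $1+Ah$ and $1+hA$ respectively; this is the only place where the smallness of $\mu$ enters. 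Since every monomial of $A$ begins with the factor $\mu$, the side condition $p\mu = 0$ forces $pA = 0$, and this disposes of two of the four points at once: $p(b+\mu) = bp + p\mu = bp$, so $p$ is a morphism of complexes, and $p\iota_\infty = p\iota + (ph)A\iota = 1$ using $p\iota = 1$ and $ph = 0$.

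The heart of the proof is a pair of ``twisted'' identities. Writing condition~(ii) of Definition~\ref{defn:2perp} as $bh = \iota p - 1 - hb$ and using $\iota p A = \iota(pA) = 0$ gives $bhA = -A - hbA$; substituting this and $b\mu = -\mu b - \mu^2$ (which is $(b+\mu)^2 = 0$) into $bA = b\mu + b\mu hA$ and cleaning up with $\mu hA = A - \mu$ yields
\[
bA = -\mu b + \mu h bA.
\]
A parallel computation starting from $A = \mu + Ah\mu$, and again using $p\mu = 0$, gives $Ab = -b\mu + Abh\mu$. Right-multiplying the displayed identity by $h$ turns it into the fixed-point equation $bAh = -\mu bh + \mu h(bAh)$, which, since $1-\mu h$ is invertible and $(1-\mu h)^{-1}\mu = A$, has the unique solution $bAh = -Abh$; equivalently $(bA+Ab)h = 0$. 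Precomposing the same manipulation with $\iota$ and using $b\iota = \iota b$ gives in the same way $h(bA+Ab)\iota = 0$.

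The two remaining checks are now bookkeeping. Expanding $(b+\mu)\iota_\infty - \iota_\infty b$ and simplifying with $b\iota = \iota b$, the recursion $A = \mu + \mu hA$, the identity $bhA = -A - hbA$ and $pA = 0$, one finds the whole expression collapses to $-h(bA+Ab)\iota$, which vanishes; so $\iota_\infty$ is a morphism of complexes. For the homotopy identity, expand $\iota_\infty p - 1 - (b+\mu)h_\infty - h_\infty(b+\mu)$ using $\iota p = 1 + bh + hb$ and both recursions for $A$; almost everything telescopes, and the residue is $hAbh + hbAh$, which is zero because $(bA+Ab)h = 0$ gives $bAh = -Abh$ and hence $hbAh = -hAbh$. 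I expect this last expansion to be the main obstacle: it produces a dozen-odd monomials in $b, \mu, h, A$, and the skill is to invoke the two recursions at exactly the right moments to make the cancellations occur, while being careful to use only the stated conditions $ph = 0$ and $p\mu = 0$ (the further side conditions $h\iota = 0$, $h^2 = 0$ familiar from the strong-deformation-retract setting are \emph{not} available here). Isolating the two twisted identities above is precisely what turns this step into a mechanical verification rather than a delicate one.
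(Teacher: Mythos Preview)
Your proof is correct. The paper does not give its own argument here; it simply cites \cite[Theorem 2.3]{Crainic}. You have supplied a self-contained direct verification, which is more than the paper does. The key algebraic identities you isolate, namely $bA = -\mu b + \mu h\, bA$ and $Ab = -b\mu + Abh\mu$, together with the consequence $(bA+Ab)h = 0$, are exactly what organise the computation; from there the checks that $p$ and $\iota_\infty$ are chain maps and that the homotopy condition holds are indeed mechanical, and your bookkeeping is right (the residue in the final step is $hAbh + hbAh = h(bA+Ab)h = 0$, as you say). Your care to use only $ph=0$ and $p\mu=0$, and not the stronger side conditions $h\iota=0$ or $h^2=0$, matches the hypotheses of the theorem as stated.
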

\begin{proof}
See \cite[Theorem 2.3]{Crainic}.
\end{proof}

Given a sequence $\bs{i} = i_1 < \cdots < i_p$ we define $R[t^{-1}_{\bs{i}}] := R[t_{i_1}^{-1},\ldots,t_{i_p}^{-1}]$ and $\theta_{\bs{i}} := \theta_{i_1} \cdots \theta_{i_p}$. The complex $( T \otimes \skos_\infty, \partial \otimes 1 )$ is a direct sum of $T$ and $T[t^{-1}_{\bs{i}}] \theta_{\bs{i}}$ for various sequences $\bs{i}$. In the notation of the theorem $\lambda_i \circ \partial + \partial \circ \lambda_i = t_i \cdot 1_T$ and therefore over $R[t_i^{-1}]$
\[
\big( t_i^{-1} \lambda_i \big) \circ \partial + \partial \circ \big( t_i^{-1} \lambda_i\big) = 1_T\,.
\]
Thus each complex $T[t^{-1}_{\bs{i}}] \theta_{\bs{i}}$ is contractible, with $t_j^{-1} \lambda_j$ giving a contracting homotopy for any $j \in \bs{i}$. For convenience, we use $j = i_1$ in what follows. The upshot is that the inclusion of $T$ as a subcomplex of $( T \otimes \skos_\infty, \partial \otimes 1 )$ is a homotopy equivalence. We can express this in terms of a deformation retract by introducing the $R$-linear homotopy
\[
h: T \otimes \skos_\infty \lto T \otimes \skos_\infty\,, \qquad h = \sum_{\bs{i} = i_1 < \cdots < i_p} h_{\bs{i}}
\]
where for the empty sequence $\bs{i} = \emptyset$ we set $h_{\bs{i}} = 0$, and for sequences of positive length we define
\begin{gather*}
h_{\bs{i}}: T[t^{-1}_{\bs{i}}] \theta_{\bs{i}} \lto T[t^{-1}_{\bs{i}}] \theta_{\bs{i}} \,,\\
h_{\bs{i}} = t_{i_1}^{-1} \lambda_{i_1}\,.
\end{gather*}
It is then easily checked that there is a deformation retract datum
\begin{equation}
\xymatrix@C+2pc{
(T,\partial) \ar@<-0.8ex>[r]_-{\iota} & (T \otimes \skos_\infty,\partial \otimes 1), \ar@<-0.8ex>[l]_-{\varepsilon}
} \quad -h
\end{equation}
where $\iota$ is the inclusion of $T$ into $T \otimes \skos_\infty$. Since $K_\infty$ is bounded it is clear that $\mu = 1 \otimes \delta$ is a small perturbation on $T \otimes \skos_\infty$, so as a consequence of the perturbation lemma we have:

\begin{lemma}\label{lemma:projishe} There is a deformation retract datum
\[
\xymatrix@C+2pc{
(T,\partial) \ar@<-0.8ex>[r]_-{\iota_\infty} & (T \otimes \skos_\infty,\partial \otimes 1 + 1 \otimes \delta), \ar@<-0.8ex>[l]_-{\varepsilon}
} \quad h_\infty
\]
where $\iota_\infty = \sum_{m \ge 0} (-1)^m (h \delta)^m \iota$. In particular, $\varepsilon$ is a homotopy equivalence with inverse $\iota_\infty$.
\end{lemma}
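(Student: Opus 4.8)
The plan is to read this lemma off the homological perturbation lemma, Theorem~\ref{theorem:perp}. I would apply that theorem to the unperturbed deformation retract datum exhibited just above, whose homotopy is $-h$ and whose retract is $\varepsilon\colon (T\otimes\skos_\infty,\partial\otimes 1)\to(T,\partial)$, taking as perturbation $\mu = 1\otimes\delta$, so that $\partial\otimes 1 + \mu$ is the differential on $T\otimes\skos_\infty$. Before the theorem applies I must check three things: that $\mu$ is a small perturbation, and the side conditions $\varepsilon\circ(-h)=0$ and $\varepsilon\circ\mu = 0$.

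First I would verify that $\mu$ is a small perturbation. That $(\partial\otimes 1+\mu)^2 = 0$ is clear, since this operator is the total differential of the tensor product of the complexes $T$ and $\skos_\infty$. For the nilpotence $(\mu h)^n = 0$: the operator $\mu$ strictly raises the $\theta$-degree, while $h = \sum_{\bs{i}}h_{\bs{i}}$ preserves it, so $(\mu h)^n$ raises the $\theta$-degree by $n$, which must vanish for $n > d$ because there are only $d$ variables $\theta_1,\dots,\theta_d$. Next, the side conditions: each $h_{\bs{i}}$ maps the summand $T[t^{-1}_{\bs{i}}]\theta_{\bs{i}}$ into itself and $h$ vanishes on the summand $\bs{i}=\emptyset$, which is the copy of $T$ that $\varepsilon$ projects onto; hence $\varepsilon h = 0$. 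Similarly $\mu$ has image in the summands with at least one $\theta$, so $\varepsilon\mu = 0$.

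Granting these, Theorem~\ref{theorem:perp} immediately yields a deformation retract datum of the shape~(\ref{eq:new_perturb_fac}) for $(T,\partial)$ and $(T\otimes\skos_\infty,\partial\otimes 1+1\otimes\delta)$, which is the assertion of the lemma, so all that is left is to unwind the formula for $\iota_\infty$. With the homotopy of the starting datum equal to $-h$ one has $A = (1+\mu h)^{-1}\mu = \sum_{n\ge 0}(-1)^n(\mu h)^n\mu$ and $\iota_\infty = \iota + (-h)A\iota$; using $h(\mu h)^n\mu = (h\mu)^{n+1}$ this telescopes to $\iota_\infty = \sum_{m\ge 0}(-1)^m(h\delta)^m\iota$, where $\delta$ acts through $\mu = 1\otimes\delta$. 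The final sentence of the lemma is then just the last remark in Definition~\ref{defn:2perp}: in a deformation retract datum $p$ is a homotopy equivalence with inverse $\iota$ (here $\varepsilon$ with inverse $\iota_\infty$).

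The proof carries almost no risk, since the two genuinely substantial ingredients — the construction of the unperturbed datum from the contracting homotopies $t_{i_1}^{-1}\lambda_{i_1}$, and the perturbation lemma itself — are already in hand. The only point demanding care is the sign bookkeeping: one has to keep the homotopy in the starting datum equal to $-h$ rather than $h$, as this is precisely what produces the alternating signs $(-1)^m$ in the closed form for $\iota_\infty$.
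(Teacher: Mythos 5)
Your proposal is correct and takes exactly the same route the paper does: the paper records the unperturbed datum with homotopy $-h$, asserts that $\mu = 1\otimes\delta$ is a small perturbation because $\skos_\infty$ is bounded, and then invokes Theorem~\ref{theorem:perp}. You have simply spelled out the verifications (nilpotence of $\mu h$ from the $\theta$-grading, the side conditions $\varepsilon h = 0$ and $\varepsilon\mu = 0$, and the sign bookkeeping that converts $\iota + (-h)A\iota$ into $\sum_{m\ge 0}(-1)^m(h\delta)^m\iota$) which the paper leaves implicit.
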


We define $\vartheta$ to be the composite in (\ref{eq:defnvartheta}), noting that there is an sign due to the isomorphism
\begin{equation}\label{eq:commutesignd}
T \otimes \big( H^d_{\mf{m}}(R)[-d] \big) \cong (T \otimes H^d_{\mf{m}}(R))[-d]\,.
\end{equation}

\begin{lemma}\label{lemma:varthetaisok} The map $\vartheta$ is given by the formula in (\ref{eq:construct_cinjres}).
\end{lemma}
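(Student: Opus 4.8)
\textbf{Proof plan for Lemma \ref{lemma:varthetaisok}.} The plan is to evaluate the composite (\ref{eq:defnvartheta}),
\[
M \xlto{i} T[1] \xlto{\iota_\infty} T\otimes\skos_\infty[1] \xlto{\gamma} T\otimes H^d_{\mf m}(R)[1-d],
\]
on a class $\overline{x} = \rho^0(x)$. Since $i(\overline{x}) = \partial(x)\in T^1$ and, by Lemma \ref{lemma:projishe}, $\iota_\infty = \sum_{m\ge 0}(-1)^m(h\delta)^m\iota$, the problem reduces to computing $\gamma\circ\iota_\infty$ on $\partial(x)$, and the remaining content is purely combinatorial together with a sign count.

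First I would cut the infinite sum for $\iota_\infty$ down to a single term. The homotopy $h$ preserves the Koszul grading on $\skos_\infty$ (each $h_{\bs i}$ maps the summand $T[t^{-1}_{\bs i}]\theta_{\bs i}$ to itself), while $\delta$ raises it by exactly one or kills the class; hence $(h\delta)^m\iota$ carries $T$ into the summand of Koszul degree exactly $m$, and vanishes for $m>d$ for lack of room. As the chain map $\gamma$ has target $H^d_{\mf m}(R)[-d]$ concentrated in cohomological degree $d$, it factors through the projection of $\skos_\infty$ onto its top summand $R[t_1^{-1},\ldots,t_d^{-1}]\theta_1\cdots\theta_d$, so $\gamma\circ\iota_\infty = \gamma\circ\big((-1)^d(h\delta)^d\iota\big)$ on the nose.

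Next I would expand $(h\delta)^d\iota(\partial x)$. With $\delta$ acting as left multiplication by $\sum_i\theta_i$ (carrying the usual Koszul sign on the $T$-factor) and $h_{\bs i}=t_{i_1}^{-1}\lambda_{i_1}$ with $i_1=\min\bs i$, a direct induction on the number of applied factors shows that $(h\delta)^d\iota(\partial x)$ is a sum, over sequences $(j_1,\dots,j_d)$ of \emph{distinct} elements of $\{1,\dots,d\}$ (a repeat makes the class vanish at some $\delta$-step), of terms whose operator part is a composite of the homotopies $\lambda_{\mu_k}$ with $\mu_k:=\min\{j_1,\dots,j_k\}$, whose coefficient has denominator $t_{\mu_1}\cdots t_{\mu_d}$, applied to $\partial x$ and tensored with $\theta_1\cdots\theta_d$. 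Now one invokes the \v{C}ech presentation (\ref{eq:cech_present_kos}): $\gamma$ annihilates any generalised fraction whose denominator fails to involve all of $t_1,\dots,t_d$, since such a fraction already lies in the image of some $R[t_j^{-1}:j\neq i]$. As the multiset $\{\mu_1,\dots,\mu_d\}$ has size $d$, lies in $\{1,\dots,d\}$, and satisfies $\mu_1\ge\cdots\ge\mu_d=1$, it equals $\{1,\dots,d\}$ only for the reversal $(j_1,\dots,j_d)=(d,d-1,\dots,1)$; for this unique surviving sequence $\mu_k=d+1-k$, so the operator part is exactly $\lambda_1\circ\cdots\circ\lambda_d$ and the denominator is $t_1\cdots t_d$, giving $\gamma\circ\iota_\infty(i(\overline{x})) = \pm\,\lambda_1\cdots\lambda_d\circ\partial(x)\otimes[1/t_1,\dots,t_d]$, which is the formula (\ref{eq:construct_cinjres}) up to sign.

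Finally I would pin down the sign, which is where the real effort lies. For the surviving reversal sequence each of the $d$ applications of $1\otimes\delta_\skos$ contributes a Koszul sign equal to $(-1)$ raised to the degree of the $T$-factor it meets; since the $k$-th application meets a $T$-factor of degree $2-k$ and $(-1)^{2-k}=(-1)^k$, their product over $k=1,\dots,d$ is $(-1)^{\binom{d+1}{2}}$, and — conveniently — the reversal is exactly the sequence for which each newly inserted $\theta$ is the smallest so far, so the $\theta$-monomial never needs reordering and no further sign arises from there. It then remains to verify that the leftover explicit signs cancel: the $(-1)^d$ from the degree-$d$ term of $\iota_\infty$, the $(-1)^d$ in the definition of generalised fractions via $\gamma$, the shift in (\ref{eq:defnvartheta}), and the commutation sign (\ref{eq:commutesignd}). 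I expect this last reconciliation — not any conceptual point — to be the only genuine obstacle, and it is most safely carried out by writing $d=1$ and $d=2$ out by hand and then bookkeeping the general case.
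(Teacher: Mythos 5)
Your proposal follows essentially the same route as the paper: truncate $\iota_\infty$ to the $m=d$ term by degree considerations, expand $(h\delta)^d$ over sequences $(j_1,\ldots,j_d)$, observe that only the reversal $(d,d-1,\ldots,1)$ yields a fraction with full denominator $t_1\cdots t_d$, and then bookkeep Koszul signs. Your $\mu_k = \min\{j_1,\ldots,j_k\}$ argument is a cleaner articulation of why only one term survives than the paper's somewhat terse phrasing, while your sign reconciliation is slightly less finished; both match the substance of the paper's proof.
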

\begin{proof}
Let a homogeneous element $x \in T$ be given. Then
\[
\gamma \iota_\infty (x) = \sum_{m \ge 0} (-1)^m \gamma (h \delta)^m \iota (x)
\]
Since $h$ decreases the $T$-degree by one and $\delta$ increases the $\skos_\infty$-degree by one, only the $m = d$ term survives after applying $\gamma$, so $\gamma \iota_\infty(x) = (-1)^d \gamma (h\delta)^d \iota(x)$. Here $\delta = 1 \otimes \delta$ applies to $T \otimes \skos_\infty$ with Koszul signs, as we move the $\theta$'s past elements of $T$.

Since $\delta$ is left multiplication by $\sum_i \theta_i$ the product $(h \delta)^d$ expands as a sum of $d^2$ terms. It follows from (\ref{eq:cech_present_kos}) that in $H^d_{\mf{m}}(R)$ a fraction without a full complement of $t_i$'s vanishes, and since $h$ applied to $x \cdot \theta_{\bs{i}}$ multiplies by $t_{i_1}^{-1}$ the only one of these $d^2$ summands which contributes a nonzero cocycle in local cohomology is the one where we apply $h \theta_d$, then $h \theta_{d-1}$, and so on; that is,
\begin{align*}
\gamma (h \delta)^d \iota(x) &= \gamma h \theta_1 \cdots h \theta_d \iota(x)\\
&= (-1)^{d|x| + \binom{d}{2}} \gamma\left( \frac{\lambda_1 \cdots \lambda_d(x) \cdot \theta_1 \cdots \theta_d }{t_1 \cdots t_d} \right)\,.
\end{align*}
Multiplying by the extra factor of $(-1)^d$ and accounting for the sign from (\ref{eq:commutesignd}), this recovers the formula of (\ref{eq:construct_cinjres}). Note that if $x \in T^0$ then $i(\overline{x}) = \partial(x)$.
\end{proof}

It remains to check that $\vartheta$ has the desired universal property.

\begin{lemma}\label{lemma:iisgood} Let $I$ be an acyclic complex of injective $R$-modules. Then the induced map
\[
\Hom_R(i, I): \Hom_R(T[1], I) \lto \Hom_R(M,I)
\]
is a quasi-isomorphism.
\end{lemma}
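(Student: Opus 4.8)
The plan is to identify $i\colon M\lto T[1]$ with a truncation-type map and show the cone is ``small'' relative to acyclic complexes of injectives, exactly as in the proof of Lemma \ref{lemma:source_coprojres}. First I would recall that $M=\Coker(\partial^{-1})$, so brutally truncating $T$ in degrees $\ge 1$ yields a \emph{co}resolution: the complex $F$ with $F^j=T^j$ for $j\ge 1$ and $F^0=0$ fits into a short exact sequence of complexes $0\lto F\lto T[1]\lto C\lto 0$ where $C$ is concentrated in non-positive degrees, and the composite $M\xlto{i} T[1]\lto$ (the quotient identifying $M$ with $Z^1T=\Im\partial^0$) realises $i$ up to the natural identifications; more precisely the mapping cone of $i\colon M\lto T[1]$ is homotopy equivalent to the brutal truncation $T_{\le 0}[1]$, which is a bounded-\emph{above} complex of finite free $R$-modules (it is, after a shift, a free resolution of $M$).

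The key step is then: for any bounded-above complex $K$ of finite free $R$-modules and any acyclic complex of injectives $I$, the complex $\Hom_R(K,I)$ is acyclic. This is the dual of the argument already used twice in the paper (in Lemma \ref{lemma:source_coprojres} and in the proof that $T\otimes H^d_{\mf m}(R)$ is acyclic): write $K=\varinjlim K_{\ge -n}$ as a union of its bounded truncations, or equivalently filter by stupid truncations, and reduce to the case $K$ bounded; a bounded complex of frees is built from finitely many shifts of $R$ by finitely many mapping cones, and $\Hom_R(R,I)\cong I$ is acyclic by hypothesis, so $\Hom_R(K,I)$ is acyclic for bounded $K$ by induction on the length. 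For the unbounded-above case one passes to the limit, noting $\Hom_R(\varinjlim K_m, I)=\varprojlim \Hom_R(K_m,I)$ and that the transition maps are surjective (since each $K_m\lto K_{m+1}$ is a split mono in each degree, the $\Hom$'s being onto), so the $\varprojlim^1$ term vanishes and cohomology commutes with the limit.

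Granting this, the long exact sequence in cohomology attached to the triangle $M\xlto{i}T[1]\lto \operatorname{cone}(i)\simeq T_{\le 0}[1]$, after applying $\Hom_R(-,I)$, shows that $\Hom_R(i,I)$ is a quasi-isomorphism precisely because $\Hom_R(T_{\le 0}[1], I)$ is acyclic; here $T_{\le 0}$ is bounded above, so the previous paragraph applies. I expect the main obstacle to be purely bookkeeping: pinning down the homotopy equivalence $\operatorname{cone}(i)\simeq T_{\le 0}[1]$ with the correct degrees and signs (the subtlety being that $i$ is defined by the universal property $i\circ\rho^0=\partial^0$ and not as a literal truncation map), and making the $\varprojlim^1$-vanishing rigorous for the unbounded-above complex $T_{\le 0}$. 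Neither is conceptually hard — both are standard once set up — so the proof should be short, essentially a citation to \cite[Lemma 3.6]{Jorgensen07} run in the injective rather than the projective variable.
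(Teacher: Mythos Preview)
Your approach is genuinely different from the paper's: the paper proves the statement by a direct lifting argument (given $f\colon M\to I$, factor through $Z^0(I)$ and build $F\colon T[1]\to I$ degree by degree using projectivity of the $T^i$ going left and injectivity of the $I^i$ going right), whereas you argue via the mapping cone. Your strategy is sound, but there is one genuine error that is not ``purely bookkeeping''.

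The claim that $\operatorname{cone}(i)\simeq T_{\le 0}[1]$ in $\K(R)$ is false in general. Writing out the cone, one finds a degreewise split short exact sequence of complexes
\[
0 \lto K \lto \operatorname{cone}(i) \xlto{p} T_{\le 0}[1] \lto 0
\]
where $K$ is the complex $0\to M\xlto{i} T^1\to T^2\to\cdots$ with $M$ in degree $-1$. For the asserted homotopy equivalence to hold, $K$ would have to be contractible; a contracting homotopy would in particular produce a retraction $T^1\to M$ of $i$, forcing $M\cong Z^1(T)$ to be a summand of the free module $T^1$ and hence free. For a nontrivial CM module this fails.

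The fix is immediate and fits your framework: $K$ is a bounded-below \emph{acyclic} complex, so every chain map $K\to I$ into a complex of injectives is null-homotopic, and thus $\Hom_R(K,I)$ is acyclic. Combined with your key step (that $\Hom_R(T_{\le 0}[1],I)$ is acyclic for $I$ acyclic injective, via the inverse-limit argument you give), the triangle coming from the short exact sequence above yields acyclicity of $\Hom_R(\operatorname{cone}(i),I)$, and you are done. So: replace the incorrect homotopy equivalence by this two-step d\'evissage and the proof goes through.

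As for what each approach buys: the paper's direct lifting is shorter and entirely elementary, but it treats surjectivity and injectivity separately (and leaves the latter to the reader). Your cone argument is more structural and handles both directions at once; it also makes transparent the parallel with Lemma~\ref{lemma:source_coprojres}, as you note.
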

\begin{proof}
We will prove that $\Hom_{\K(R)}(T[1], I) \lto \Hom_{\K(R)}(M,I)$ is surjective and leave the proof of injectivity to the reader. If $f: M \lto I$ is a morphism of complexes then there is a factorisation $\overline{f}: M \lto Z$, where $Z = Z^0(I)$, as in the commutative diagram
\[
\xymatrix{
\cdots \ar[r] & T^{-1} \ar[r]^{-\partial} & T^0 \ar[dr]_{-\rho} \ar[rr]^-{-\partial} & & T^1 \ar[r]^{-\partial} & T^2 \ar[r] & \cdots\\
& & & M \ar@/^/[ddr]^f \ar[d]_{\overline{f}} \ar[ur]_i\\
& & & Z \ar[dr]\\
\cdots \ar[r] & I^{-2} \ar[r] & I^{-1} \ar[ur] \ar[rr] & & I^0 \ar[r] & I^1 \ar[r] & \cdots
}
\]
Using projectivity of the top row and acyclicity of the bottom row, we lift $\overline{f}$ to a sequence of maps $F^i: T^{i+1} \lto I^i$ making the diagram commute in degrees $i \le -1$. Then using acyclicity of the top row and injectivity of the bottom row we produce the $F^i$ for $i \ge 0$, and together these maps define a morphism $F: T[1] \lto I$ lifting $f$.
\end{proof}

\begin{lemma}\label{lemma:tskosvs} Let $I$ be an acyclic complex of injective $R$-modules. Then the induced map
\[
\Hom_R(\gamma, I): \Hom_R( T \otimes \skos_\infty, I ) \lto \Hom_R( T \otimes H^d_{\mf{m}}(R)[-d], I )
\]
is a quasi-isomorphism.
\end{lemma}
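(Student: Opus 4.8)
The plan is to deduce this from the augmentation quasi-isomorphism $\gamma\colon\skos_\infty\lto H^d_{\mf{m}}(R)[-d]$ by tensoring with $T$ and taking Hom into $I$. First I would recall that $\gamma$ is a quasi-isomorphism of complexes of $R$-modules, so its mapping cone $C(\gamma)$ is acyclic; moreover $\skos_\infty$ is a bounded complex of flat $R$-modules and $H^d_{\mf{m}}(R)$ is flat (being a filtered colimit of the $\Ext^d_R(R/\mf m^i,R)\cong R/\mf m^i$-duals... in fact $H^d_{\mf m}(R)$ is the injective hull of $k$, so one should instead argue directly), hence tensoring with $T$ is exact enough that $1_T\otimes\gamma\colon T\otimes\skos_\infty\lto T\otimes H^d_{\mf m}(R)[-d]$ is again a quasi-isomorphism — equivalently its cone $T\otimes C(\gamma)$ is acyclic. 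A clean way to see the latter: $C(\gamma)$ is an acyclic bounded-above complex whose terms are localisations of $R$ (hence flat), so $T\otimes C(\gamma)$ is acyclic by the same brutal-truncation argument already used twice above (truncate $T$ in low degrees to get a free resolution of a cocycle module, and Tor against a flat module vanishes).

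The heart of the argument is then the following: $\Hom_R(\gamma,I)$ is a quasi-isomorphism precisely when $\Hom_R\big(C(1_T\otimes\gamma),I\big)$ is acyclic, and since $C(1_T\otimes\gamma)=T\otimes C(\gamma)$ is an \emph{acyclic} complex whose terms are flat (in fact direct sums of localisations of free modules), the key input is that $\Hom_R(A,I)$ is acyclic whenever $A$ is an acyclic complex of flat modules bounded above and $I$ is an acyclic complex of injectives. I would prove this by the standard double-truncation/spectral-sequence or direct diagram-chase: because $A$ is bounded above and acyclic, brutal truncations $A_{\ge n}$ are bounded complexes of flats resolving cocycle modules, and $\Hom_R(\text{flat},\text{injective})$ behaves well; alternatively one notes $I$, being an acyclic complex of injectives over a Gorenstein ring, is \emph{homotopy injective relative to acyclics} (this is exactly the kind of statement underlying the definition of complete injective resolution), so $\Hom_R(-,I)$ sends acyclic complexes to acyclic complexes. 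Since $C(\gamma)$ has flat terms, no projectivity is lost in tensoring with $T$, and the conclusion follows.

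Concretely I would organize the proof as: (1) observe $\gamma$ is a quasi-isomorphism with cone $C(\gamma)$ an acyclic complex of flat $R$-modules, bounded above; (2) tensor with $T$ and invoke the truncation argument (as in the proof that $T\otimes J$ is acyclic) to conclude $T\otimes C(\gamma)$ is acyclic; (3) identify $T\otimes C(\gamma)$ with the cone of $1_T\otimes\gamma$, so that $1_T\otimes\gamma$ is a quasi-isomorphism; (4) for an acyclic complex of injectives $I$, argue that $\Hom_R(-,I)$ carries quasi-isomorphisms between complexes with flat terms to quasi-isomorphisms — equivalently carries the acyclic flat complex $T\otimes C(\gamma)$ to an acyclic complex — and then $\Hom_R(\gamma,I)$ is a quasi-isomorphism after applying the shift identification \eqref{eq:commutesignd}.

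The main obstacle I anticipate is step (4): one must be careful that $I$ is only assumed acyclic (not bounded, and not a genuine injective resolution), so the usual ``Hom out of an acyclic into an injective is acyclic'' needs the flatness of the terms of $T\otimes C(\gamma)$ together with the fact that over a Gorenstein ring an acyclic complex of injectives is right-orthogonal (in $\K(R)$) to acyclic complexes of flats — this is the Gorenstein-ring input (finite injective dimension of $R$, equivalently finite flat dimension of injectives), used via exactly the same $\mathrm{Tor}$-vanishing / bounded-truncation device that appears in the lemma ``$T\otimes H^d_{\mf m}(R)$ is acyclic'' above. Once that orthogonality is in hand, the rest is formal manipulation of cones and shifts.
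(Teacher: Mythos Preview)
Your argument has a genuine gap at the point you yourself flag as worrying. The cone $C(\gamma)$ does \emph{not} have flat terms: its top term is $H^d_{\mf{m}}(R)$, the injective hull of $k$, which for $d\ge 1$ is never flat (for instance over a DVR one computes $\operatorname{Tor}_1^R(E(k),k)\cong k$). Consequently $T\otimes C(\gamma)$ is not an acyclic complex of flats, and the orthogonality you invoke in step~(4) --- that an acyclic complex of injectives is right-orthogonal in $\K(R)$ to acyclic complexes of flats --- simply does not apply. Nor can one rescue the argument by weakening ``flat'' to ``finite projective dimension'': over a Gorenstein ring the injective $H^d_{\mf{m}}(R)$ does have finite projective dimension, but the putative orthogonality fails in this generality, as the example $A=I=T\otimes H^d_{\mf{m}}(R)$ shows (this is a nonzero object of $\K_{\ac}(\Inj R)$ with nonzero endomorphisms).

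The paper circumvents exactly this difficulty by not treating $T\otimes C(\gamma)$ as a whole. Instead it brutally truncates $T$ into a bounded-above piece $F=T_{\le 0}$ and a bounded-below piece $K=T_{\ge 1}[1]$, and handles the two resulting cones separately with \emph{different} mechanisms. For $F$ one uses adjunction $[F\otimes C,I]\cong[F,[C,I]]$; here $C$ is bounded, so any map $C\to I$ factors through a bounded-below truncation of $I$ and is null-homotopic, whence $[C,I]$ is acyclic, and then K-projectivity of the bounded-above free complex $F$ gives acyclicity of $[F,[C,I]]$. For $K$ one first shows $K\otimes C$ is a \emph{bounded-below} acyclic complex (this uses the already-established acyclicity of $T\otimes H^d_{\mf{m}}(R)$), and then any map from a bounded-below complex into $I$ factors through a bounded-below truncation of $I$, which is K-injective. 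The splitting into $F$ and $K$ is precisely what lets one avoid needing any global orthogonality statement for the unbounded mixed complex $T\otimes C(\gamma)$.
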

\begin{proof}
Let $F = T_{\le 0}$ and $K = T_{\ge 1}[1]$ be the brutal truncations. There is a triangle
\begin{equation}
\xymatrix{
T \ar[r] & F \ar[r] & K \ar[r]^{+} &
}
\end{equation}
from which we deduce morphisms of triangles (we write $J = H^d_{\mf{m}}(R)[-d]$ to avoid clutter)
\begin{equation}\label{eq:tskosvs1}
\xymatrix{
T \otimes \skos_\infty \ar[r]\ar[d] & F \otimes \skos_\infty \ar[r]\ar[d] & K \otimes \skos_\infty \ar[d]\ar[r]^-{+} &\\
T \otimes J \ar[r] & F\otimes J \ar[r] & K \otimes J \ar[r]^-{+} &
}
\end{equation}
and (writing $[-,-] = \Hom_R(-,-)$)
\begin{equation}
\xymatrix{
[T \otimes \skos_\infty, I] & [F \otimes \skos_\infty,I] \ar[l] & [K \otimes \skos_\infty, I] \ar[l] & \ar[l]_-{+}\\
[T \otimes J, I] \ar[u]^{\psi'} & [F\otimes J, I]\ar[u]^{\psi}\ar[l] & [K \otimes J,I] \ar[l]\ar[u]^{\psi''}  & \ar[l]_-{+}\,.
}
\end{equation}
To prove that $\psi'$ is a quasi-isomorphism, it suffices to prove that both $\psi$ and $\psi''$ are. Let $C$ denote the cone of the quasi-isomorphism $\gamma: \skos_\infty \lto J$, so that $C$ is a bounded acyclic complex. Then
\[
[ P \otimes C, I ] \cong [ P, [C,I]]\,.
\]
Any morphism from $C$ to $I$ factors through a bounded below complex of injectives and is therefore null-homotopic, that is, $[C,I]$ is acyclic. Hence $[P \otimes C, I]$ is acyclic and $\psi$ is a quasi-isomorphism. 

The cone of $\psi''$ is $[K \otimes C, I]$. We know that $T \otimes J$ is acyclic, so the first two vertical maps in (\ref{eq:tskosvs1}) are quasi-isomorphisms. Hence the third vertical map is a quasi-isomorphism, and $K \otimes C$ is a bounded below acyclic complex. But then $[ K \otimes C, I]$ is acyclic, so $\psi''$ is a quasi-isomorphism and we are done.
\end{proof}

\begin{proof}[Proof of Theorem \ref{thm:construct_cinjres}] For an acyclic complex of injective $R$-modules $I$ we apply $[-,I] = \Hom_R(-,I)$ to (\ref{eq:defnvartheta}) to obtain a chain of quasi-isomorphisms using Lemma \ref{lemma:tskosvs}, Lemma \ref{lemma:projishe} and Lemma \ref{lemma:iisgood}
\[
\xymatrix{
\big[T \otimes H^d_{\mf{m}}(R)[1-d], I\big] \ar[r] & \big[T \otimes \skos_\infty[1], I\big] \ar[r] & \big[T[1], I\big] \ar[r] & \big[M, I\big]\,.
}
\]
This proves that $\vartheta$ is a complete injective resolution. It only remains to argue that, up to homotopy, $\vartheta$ is independent of $\bs{t}$ and the $\lambda_i$. But this is clear, since the definition in (\ref{eq:defnvartheta}) involves no choices.
\end{proof}

We include the following remarks for completeness: they will not be needed in the sequel.

\begin{remark}\label{remark:minimal_comp_free_res} A complex of finite free $R$-modules $X$ is \emph{minimal} if $k \otimes X$ has zero differential. Any isomorphism class in $\cat{T}$ contains a minimal complex \cite[\S 8]{Avramov} which is unique up to isomorphism in the category of complexes. If $M$ has no free summands and we take both $F$ and $Q$ to be minimal resolutions, then the complete resolution of $M$ constructed in (\ref{eq:constructed_cproj_ress}) is minimal.

A complex of injective $R$-modules $I$ is \emph{minimal} if an endomorphism $f: I \lto I$ is a homotopy equivalence if and only if it is an isomorphism of complexes; equivalently, if for every $n \in \mathbb{Z}$ the inclusion $Z^n(I) \lto I^n$ is an injective envelope \cite[Lemma B.1]{Krause05}. Suppose that $T \in \cat{T}$ is minimal and consider the isomorphism
\begin{equation}\label{eq:minimality_complex_inj}
\Hom_R(k, T \otimes H^d_{\mf{m}}(R)) \cong \Hom_R(k, H^d_{\mf{m}}(R)) \otimes T \cong k \otimes T\,.
\end{equation}
Set $I = T \otimes H^d_{\mf{m}}(R)$. Since $\Hom_R(k, I)$ has zero differential, for $n \in \mathbb{Z}$ the socle of $I^n$ is contained in $Z^n(I)$. Every element of $I^n$ is $\mf{m}$-torsion, so it follows that the inclusion $Z^n(I) \lto I^n$ is essential, and thus $I$ is minimal. Hence if $M$ is a CM $R$-module and $T$ a minimal complete free resolution of $M$, Theorem \ref{thm:construct_cinjres} will produce a \emph{minimal} complete injective resolution.
\end{remark}

\begin{remark}\label{remark:orderingandsigns} The homotopy class of $\vartheta$ is independent of the ordering of the symbols $\lambda_1,\ldots,\lambda_d,\partial$ in (\ref{eq:construct_cinjres}), up to signs: applying these maps in any order defines a morphism $M \lto T \otimes H^d_{\mf{m}}(R)[1-d]$ homotopic to $\textup{sgn}(\sigma)\vartheta$ where $\sigma$ is the corresponding permutation on $d+1$ letters. 

To see this note that $\lambda_i\circ  \partial + \partial \circ \lambda_i = t_i \cdot 1_T$ annihilates any generalised fraction with $t_1,\ldots,t_d$ in the denominator, so $\partial$ effectively anticommutes with $\lambda_i$ in (\ref{eq:construct_cinjres}). Let $\bs{t}'$ be $\bs{t}$ with $t_j$ and $t_{j+1}$ interchanged. By the transformation rule (Proposition \ref{prop:transition_det}) there is an equality $[ 1 \,/\, \bs{t} ] = - [1\,/\, \bs{t}']$ of generalised fractions, and hence $\lambda_j$ and $\lambda_{j+1}$ anticommute in (\ref{eq:construct_cinjres}).
\end{remark}

\begin{remark} Complete injective resolutions give an embedding of the singularity category into a compactly generated triangulated category, for any separated noetherian scheme $X$. Using Brown representability Krause proves in \cite{Krause05} that complete injective resolutions exist in this generality, and that taking complete injective resolutions defines a fully faithful functor
\[
\ci(-): \D_{\sg}(X) \lto \K_{\ac}(\Inj X)
\]
where $\K_{\ac}(\Inj X)$ is the homotopy category of acyclic complexes of injective quasi-coherent sheaves. Moreover, he shows that this category is compactly generated and that the image of the embedding $\ci(-)$ is, up to split idempotents, exactly the subcategory of compact objects.
\end{remark}

\section{Computing the pairing}\label{section:duality}

Let $(R,\mf{m},k)$ be a local Gorenstein ring of Krull dimension $d$ with an isolated singularity and define
\begin{equation}
\cat{T} = \K_{\ac}(\free R)\,.
\end{equation}
Given $X,Y \in \cat{T}$ the isomorphism of Theorem \ref{theorem:ausdual} corresponds to a nondegenerate pairing
\begin{equation}\label{eq:duality1}
\plangle -, - \prangle: \cat{T}(Y, X[d-1]) \otimes \cat{T}(X,Y) \lto H^d_{\mf{m}}(R)\,.
\end{equation}
In this section we give an explicit formula for this pairing, using the construction of complete~injective resolutions in Section \ref{section:cinjres}. When $R$ is a $k$-algebra this can be refined to a nondegenerate pairing taking values in $k$, by taking residues of generalised fractions. Our convention is that functions taking values in local cohomology (resp. in $k$) are given double brackets $\plangle - \prangle$ (resp. ordinary brackets $\langle - \rangle$).

We will see that the pairing $\plangle -, - \prangle$ factors as composition followed by a ``pretrace'' map
\[
\xymatrix@C+2pc{
\cat{T}(Y, X[d-1]) \otimes \cat{T}(X,Y) \ar[r]^-{- \circ -} & \cat{T}(X,X[d-1]) \ar[r]^-{\plangle - \prangle} & H^d_{\mf{m}}(R)\,,
}
\]
that is $\plangle \psi, \phi \prangle = \plangle \psi \circ \phi \prangle$. We are going to first describe a formula for $\plangle - \prangle$, and then prove that~the pairing determined by this formula is indeed the one produced by Theorem \ref{theorem:ausdual}. To this end we~fix~a complex $X \in \cat{T}$ with differential $\partial$ and, as in the previous section, we choose a system of parameters~$\bs{t}$ acting null-homotopically on $X$ together with null-homotopies $\lambda_i$ on $X$ such that
\begin{equation}\label{eq:homotopylambdasec4}
\lambda_i \circ \partial + \partial \circ \lambda_i = t_i \cdot 1_X\,.
\end{equation}
Given a morphism $\alpha: X \lto X[d-1]$ we define the degree zero operator $\holn_\alpha$ on $X$ as in Equation (\ref{eq:defnptr0}) of the introduction. Recall the claim made in the introduction that the class
\begin{equation}\label{eq:defnptragain}
\plangle \alpha \prangle := (-1)^{\binom{d+1}{2}}\begin{bmatrix} \tr( \holn^i_\alpha ) \\ t_1, \ldots, t_d \end{bmatrix} = (-1)^{\binom{d+1}{2} + i} \begin{bmatrix} \tr\left( \alpha \circ \lambda_1 \cdots \lambda_d \circ \partial \right)^i \\ t_1, \ldots, t_d \end{bmatrix} \in H^d_{\mf{m}}(R)
\end{equation}
is independent of all choices: the integer $i$, the system of parameters $\bs{t}$, and the null-homotopies $\lambda_j$. Independence of the second two choices will follow from the next theorem, and independence of $i$ is the statement of the following lemma. We write $\plangle - \prangle_X$ for $\plangle - \prangle$ if we want to emphasise $X$.

\begin{lemma}\label{lemma:ptrandinteger} For $i \in \mathbb{Z}$ we have the following equality in $H^d_{\mf{m}}(R)$:
\[
\begin{bmatrix} \tr( \holn^i_\alpha ) \\ t_1, \ldots, t_d \end{bmatrix} = \begin{bmatrix} \tr( \holn^0_\alpha ) \\ t_1, \ldots, t_d \end{bmatrix}\,.
\]
\end{lemma}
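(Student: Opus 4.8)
The plan is to reduce the statement to a computation of the commutator of $\holn_\alpha$ with the differential, and then exploit the fact that any trace of such a commutator vanishes degree-by-degree. First I would observe that $\holn_\alpha = (-1)^F \alpha \circ \lambda_1 \cdots \lambda_d \circ \partial$ is a degree zero $R$-linear operator on $X$, so in each degree $i$ the component $\holn_\alpha^i$ is an endomorphism of the finite free module $X^i$ and $\tr(\holn_\alpha^i)$ makes sense. The key structural remark is that the generalised fraction $[ \, r \, / \, t_1,\ldots,t_d \, ]$ depends only on the class of $r$ modulo $(t_1,\ldots,t_d)R$ — more precisely, acting on a fraction with a single $t_i$ in the denominator by $t_i$ kills it, as noted just before Proposition \ref{prop:transition_det}. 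So it suffices to show that $\tr(\holn_\alpha^i) - \tr(\holn_\alpha^{i-1})$ lies in the ideal $(t_1,\ldots,t_d)R$, or better still, that after summing telescopically it equals $t_j$ times something for each $j$ (actually one only needs membership in the sum of the $t_j R$, which is all of $(\bs t)$).

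The main computation I would carry out: compare $\partial^{i-1} \circ \holn_\alpha^{i-1}$ with $\holn_\alpha^i \circ \partial^{i-1}$, both maps $X^{i-1} \to X^i$. Since $\alpha$ is a chain map $X \to X[d-1]$ it commutes with $\partial$ up to the sign built into the shift, and $\lambda_1 \cdots \lambda_d$ together with $\partial$ satisfy the homotopy relations \eqref{eq:homotopylambdasec4}. Expanding $\partial \circ \lambda_1 \cdots \lambda_d \circ \partial$ using $\partial \lambda_i = t_i - \lambda_i \partial$ repeatedly, every term that is not already visibly divisible by some $t_j$ reassembles into $\lambda_1 \cdots \lambda_d \circ \partial \circ \partial = 0$ modulo sign, so one gets an identity of the shape
\[
\partial^{i-1} \circ \holn_\alpha^{i-1} - \holn_\alpha^i \circ \partial^{i-1} = \sum_{j=1}^d t_j \cdot g_j^{i}
\]
for suitable $R$-linear $g_j^i : X^{i-1} \to X^i$ (the grading operator $(-1)^F$ contributes only signs, which I would track carefully). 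Now $X$ is acyclic, hence exact; still, I do not want to use a splitting. Instead I would use the standard trick: for a short exact sequence $0 \to Z^i \to X^i \to Z^{i+1} \to 0$ with $Z^i = \ker \partial^i$, the operator $\holn_\alpha^i$ restricts to $Z^i$ (because $\holn_\alpha$ is a chain map up to the relevant sign — this needs checking, since $\alpha$ is a chain map and $\partial^2=0$) and induces an operator on $Z^{i+1}$, and $\tr(\holn_\alpha^i) = \tr(\holn_\alpha|_{Z^i}) + \tr(\text{induced on } Z^{i+1})$. Chasing this, the telescoping sum $\sum (\tr \holn_\alpha^i - \tr \holn_\alpha^{i-1})$ collapses, modulo $(\bs t)$, to the trace of a commutator-type expression that vanishes.

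Alternatively — and this is the route I would actually write up, as cleaner — I would work directly with the cocycle representative. Because $\tr(\,[\partial, s]^i\,) $-type differences telescope, it is enough to note that $[1\,/\,\bs t]$ annihilates $(\bs t)$ and that $\tr(\holn_\alpha^i) - \tr(\holn_\alpha^{i-1})$, as computed from the displayed identity above together with the cyclicity of trace $\tr(\partial^{i-1} g) = \tr(g\,\partial^{i-1})$ applied to the components of $g_j^i$, lands in $(t_1,\ldots,t_d)R$; hence the two generalised fractions agree in $H^d_{\mf m}(R)$, and an easy induction on $|i|$ gives the general case. The main obstacle is the bookkeeping in expanding $\partial \circ \lambda_1\cdots\lambda_d \circ \partial$: one must show that \emph{every} monomial in the $\lambda$'s and $\partial$'s that survives modulo $(\bs t)$ either contains $\partial^2 = 0$ or is cancelled by another such monomial once the Koszul/grading signs from $(-1)^F$ and from the shift in $\alpha$ are accounted for. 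This is the same anticommutation phenomenon recorded in Remark \ref{remark:orderingandsigns}, so I expect the sign discipline there to transfer, but it is where the real care is needed.
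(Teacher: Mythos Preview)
Your plan assembles all the right ingredients—reduce to $R/(\bs t)$, use that $\partial$ anticommutes with each $\lambda_j$ there, invoke the chain-map property of $\alpha$, and cyclicity of the trace—but the way you organise them around the commutator $[\partial,\holn_\alpha]$ leaves a gap. Observe that $\holn_\alpha\circ\partial=(-1)^F\alpha\lambda_1\cdots\lambda_d\,\partial\partial=0$ identically, so your ``displayed identity'' collapses to the statement that $\partial\holn_\alpha\equiv 0$ modulo $(\bs t)$; on its own this does not give $\tr(\holn_\alpha^i)\equiv\tr(\holn_\alpha^{i-1})$, because $\partial\holn_\alpha$ and $\holn_\alpha\partial$ are maps $X^{i-1}\to X^i$, not endomorphisms one can trace. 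Your fallback via the short exact sequence $0\to Z^i\to X^i\to Z^{i+1}\to 0$ is also problematic: the cycle modules $Z^i$ are maximal Cohen--Macaulay but in general not free (that is the whole point—we are in a singular ring), so $\tr(\holn_\alpha|_{Z^i})$ is not defined and the additivity you want is not available.

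The paper's proof uses the same ingredients but applied to a different factorisation. Write $\holn_\alpha=B\circ\partial$ with $B=(-1)^F\alpha\,\lambda_1\cdots\lambda_d$, a degree $-1$ operator. The object to control is $\partial B$ versus $B\partial$, not $\partial\holn_\alpha$ versus $\holn_\alpha\partial$. Working in $R/(\bs t)$ one commutes $\partial$ through $B$ directly: each $\lambda_j$ contributes a sign, the chain-map property of $\alpha$ contributes $(-1)^{d-1}$, and $(-1)^F$ another sign, so $\alpha\lambda_1\cdots\lambda_d\,\partial\equiv -\,\partial\,\alpha\lambda_1\cdots\lambda_d$ modulo $(\bs t)$. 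Now cyclicity of the trace, $\tr(\partial^{-1}\circ g)=\tr(g\circ\partial^{-1})$ for $g\colon X^0\to X^{-1}$, moves the $\partial$ around to the right and lands us on $X^{-1}$, where one reads off $\holn_\alpha^{-1}$ (the remaining sign being absorbed by $(-1)^F$ in degree $-1$). This is a five-line rotation of $\partial$ through the trace; no exact-sequence bookkeeping is needed. Your ``alternative route'' is groping towards exactly this, but until you replace the commutator $[\partial,\holn_\alpha]$ by the comparison of $\partial B$ with $B\partial$ the argument does not close.
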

\begin{proof}
It suffices to prove that $\tr( \holn^i_\alpha ) = \tr( \holn^0_\alpha )$ in $R/(\bs{t})$. But using (\ref{eq:homotopylambdasec4}) we have
\begin{align*}
\tr( \holn^0_\alpha ) &= \tr\left( \alpha^{1-d} \circ \lambda_1^{2-d} \cdots \lambda_d^1 \circ \partial^0 \right)\\
&= (-1)^d \tr\left( \alpha^{1-d} \circ \partial^{-d} \circ \lambda_1^{1-d} \cdots \lambda_d^0 \right)\\
&= - \tr\left( \partial^{-1} \circ \alpha^{-d} \circ \lambda_1^{1-d} \cdots \lambda_d^0 \right)\\
&= - \tr\left( \alpha^{-d} \circ \lambda_1^{1-d} \cdots \lambda_d^0 \circ \partial^{-1} \right)\\
&= - \tr( \holn^{-1}_\alpha )\,.
\end{align*}
Continuing to ``rotate'' $\partial$ through the trace in this direction takes care of all $i < 0$, and for $i > 0$ we simply rotate the other way.
\end{proof}

The main theorem states that the functional $\plangle - \prangle$ determines Auslander's duality.

\begin{theorem}\label{theorem:genfracformula} The nondegenerate pairing of (\ref{eq:duality1}) takes the value
\[
\plangle \psi, \phi \prangle = \plangle \psi \circ \phi \prangle = (-1)^{\binom{d+1}{2}}\begin{bmatrix} \tr\left( \psi \circ \phi \circ \lambda_1 \cdots \lambda_d \circ \partial \right)^0 \\ t_1, \ldots, t_d \end{bmatrix}
\]
for a pair of morphisms $\psi: Y \lto X[d-1]$ and $\phi: X \lto Y$.
\end{theorem}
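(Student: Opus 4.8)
The plan is to unwind the chain of isomorphisms in the proof of Theorem \ref{theorem:ausdual} and check that, after inserting the explicit complete injective resolution $\vartheta$ constructed in Theorem \ref{thm:construct_cinjres}, the pairing one obtains is precisely the generalised-fraction formula for $\plangle \psi \circ \phi \prangle$. Since every step in the proof of Theorem \ref{theorem:ausdual} is a standard canonical isomorphism except the one using the complete injective resolution, the entire computation reduces to tracking how $\vartheta$ intervenes. More precisely: given $\psi \in \cat{T}(Y, X[d-1])$, its image under the isomorphism (\ref{eq:ausdual1}) is a functional on $\cat{T}(X,Y)$; evaluating it on $\phi \in \cat{T}(X,Y)$, we must follow $\psi \otimes \phi$ through the Hom-tensor adjunctions, the double-dual identification $X^{\dual\dual} \cong X$, the Matlis isomorphism $\End_R(H^d_{\mf{m}}(R)) \cong \widehat R$, and the map $\Hom_R(N,I) \to \Hom_R(Y \otimes H^d_{\mf{m}}(R)[1-d], I)$ induced by $\vartheta$, and read off the resulting element of $H^d_{\mf{m}}(R) = H^0\Hom_R(k, X \otimes H^d_{\mf{m}}(R))$ — using here that $\cat{T}(X,X[d-1])$ is finite length, so this $H^0$ computes the pairing value after composing with the unit $k \to R$.

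First I would reduce to the pretrace statement $\plangle \psi, \phi \prangle = \plangle \psi \circ \phi \prangle$. The adjunction isomorphisms in the proof of Theorem \ref{theorem:ausdual} are all natural, so the pairing is the composite of $\cat{T}(Y,X[d-1]) \otimes \cat{T}(X,Y) \xrightarrow{\circ} \cat{T}(X, X[d-1])$ with the map $\cat{T}(X, X[d-1]) \to H^d_{\mf{m}}(R)$ obtained by specialising the theorem to $Y = X$ and evaluating on $1_X$; this is a formal consequence of naturality in the $Y$ variable together with functoriality of $\cf$ and $\ci$. It therefore suffices to prove that for $\alpha \in \cat{T}(X, X[d-1])$ the class $\plangle \alpha \prangle$ coming from Auslander duality equals $(-1)^{\binom{d+1}{2}} [\,\tr(\alpha \circ \lambda_1 \cdots \lambda_d \circ \partial)^0 / t_1, \dots, t_d\,]$. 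Independence of $\bs t$ and the $\lambda_i$ is then automatic, and independence of the degree $i$ is Lemma \ref{lemma:ptrandinteger}.

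Next, for the key computation, I would set up the evaluation explicitly. Writing the complete injective resolution of $X$ itself (in the role of $N$ for $Y = X$) as $\vartheta\colon M \to X \otimes H^d_{\mf{m}}(R)[1-d]$ with $M = \Coker(\partial^{-1}_X)$, the relevant composite in the proof of Theorem \ref{theorem:ausdual} starts from $1_X \in \cat{T}(X,X)$, identifies this with $\id \in \Hom_R(M, X \otimes H^d_{\mf{m}}(R)[1-d])$ via $\vartheta$ and the natural quasi-isomorphism $\Hom_R(X, N) \simeq \Hom_R(X,M)$, and then transports $\alpha$ across $\Hom_R(X \otimes H^d_{\mf{m}}(R)[1-d], X \otimes H^d_{\mf{m}}(R))$. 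Here $\alpha$ acts by post-composition $(\alpha \otimes 1) \circ (-)$ after shifting. Tracing $\vartheta$ through the adjunctions, the resulting element of $\Hom_R(k, X \otimes H^d_{\mf{m}}(R))$ sends $1 \in k$ to $\sum$ of the images $\vartheta$ of a basis of $M$ paired with $\alpha$; using the formula (\ref{eq:construct_cinjres}) for $\vartheta$, namely $\vartheta(\overline x) = (-1)^{\binom{d+1}{2}} \lambda_1 \cdots \lambda_d \partial(x) \otimes [1/t_1,\dots,t_d]$, one sees that applying $\alpha$ and then taking the trace against a dual basis of $X^0$ produces exactly $(-1)^{\binom{d+1}{2}}\tr(\alpha \circ \lambda_1 \cdots \lambda_d \circ \partial)^0 \otimes [1/t_1,\dots,t_d]$, which under the identification $H^d_{\mf{m}}(R) \otimes R \cong H^d_{\mf{m}}(R)$ and the $R$-linearity of generalised fractions is $(-1)^{\binom{d+1}{2}}[\,\tr(\alpha \circ \lambda_1 \cdots \lambda_d \circ \partial)^0 / t_1, \dots, t_d\,]$.

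The main obstacle will be bookkeeping the signs and the Koszul conventions rigorously: the shift $[1-d]$ interacts with the tensor factor $H^d_{\mf{m}}(R)$ through the sign of (\ref{eq:commutesignd}), the double-dual isomorphism $X^{\dual\dual} \cong X$ and the swap $X^\dual \otimes N \cong N \otimes X^\dual$ contribute further signs in each cohomological degree, and the trace of a post-composition operator on a tensor product must be computed compatibly with the grading operator $(-1)^F$ implicit in $\holn_\alpha$. I would handle this by fixing, once and for all, degreewise bases and writing each isomorphism as an explicit matrix, then verifying that the accumulated sign is exactly the $(-1)^{\binom{d+1}{2}}$ already present in $\vartheta$ (so no extra sign appears). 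A secondary point requiring care is that $\alpha$ is only defined up to homotopy: one must check that replacing $\alpha$ by $\alpha + \partial h \pm h \partial$ changes $\tr(\alpha \circ \lambda_1 \cdots \lambda_d \circ \partial)^0$ by something lying in $(\bs t)$, which follows from the same ``rotating $\partial$ through the trace'' manoeuvre used in the proof of Lemma \ref{lemma:ptrandinteger} together with the fact that $t_i \cdot [1/t_1,\dots,t_d] = 0$ in $H^d_{\mf{m}}(R)$. Once these compatibilities are in place, the theorem follows by assembling the chain of identifications.
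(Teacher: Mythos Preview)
Your proposal is correct and follows essentially the same route as the paper: both unwind the chain of isomorphisms in Theorem \ref{theorem:ausdual}, insert the explicit formula (\ref{eq:construct_cinjres}) for $\vartheta$, and read off the trace against a basis of $X^0$. The paper's execution is more compact---it packages the composite directly as the coevaluation/evaluation diagram
\[
R \xrightarrow{\textup{coev}} X^\dual \otimes M \xrightarrow{1 \otimes \vartheta} X^\dual \otimes X \otimes H^d_{\mf{m}}(R)[1-d] \xrightarrow{1 \otimes (\psi\circ\phi) \otimes 1} X^\dual \otimes X \otimes H^d_{\mf{m}}(R) \xrightarrow{\eval \otimes 1} H^d_{\mf{m}}(R)
\]
and computes on a basis without separately verifying homotopy-invariance of $\alpha$ (which is automatic, since the pairing of (\ref{eq:duality1}) is already defined on $\cat{T}$) or doing a line-by-line sign audit (the signs are absorbed into the one $(-1)^{\binom{d+1}{2}}$ carried by $\vartheta$).
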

\begin{proof}
If we set $M = \Coker(\partial^{-1})$ then the morphism $\vartheta: M \lto X \otimes H^d_{\mf{m}}(R)[1-d]$ defined by
\begin{gather*}
\vartheta(\overline{x}) = (-1)^{\binom{d+1}{2}} \lambda_1 \cdots \lambda_d \circ \partial(x) \otimes \big[ 1 \; / \; t_1, \ldots, t_d \,\big]
\end{gather*}
is a complete injective resolution, by Theorem \ref{thm:construct_cinjres}. If we begin with a morphism $\psi: Y \lto X[d-1]$ at the end of the chain of quasi-isomorphisms in the proof of Theorem \ref{theorem:ausdual}, then the corresponding function $\Hom_R(X,Y) \lto H^d_{\mf{m}}(R)$ evaluates on $\phi: X \lto Y$ to the image of the identity under the composite
\[
\xymatrix@C+3pc{
X^{\dual} \otimes M \ar[r]^-{1 \otimes \vartheta} & X^{\dual} \otimes X \otimes H^d_{\mf{m}}(R)[1-d] \ar[r]^-{1 \otimes (\psi \circ \phi) \otimes 1} & X^{\dual} \otimes X \otimes H^d_{\mf{m}}(R) \ar[d]^{\eval \otimes 1}\\
R \ar[u]^{\textup{coev}} & & H^d_{\mf{m}}(R)\,.
}
\]
Here $\textup{coev}: R \lto X^{\dual} \otimes M$ denotes the map sending $1 \in R$ to $\sum_i e_i^* \otimes \overline{e_i}$ for a basis $e_i$ of $X^0$, and $\eval: X^{\dual} \otimes X \lto R$ is the usual evaluation map. For readability we omit the canonical isomorphisms used to pull shifts out of components of the tensor product. We conclude that
\begin{align*}
\plangle \psi, \phi \prangle &= \sum_i \eval ( 1 \otimes \psi \circ \phi \otimes 1 )(1 \otimes \vartheta)(e_i^* \otimes \overline{e_i})\\
&= (-1)^{\binom{d+1}{2}} \sum_i \eval\left( e_i^* \otimes \psi \circ \phi \circ \lambda_1 \cdots \lambda_d \circ \partial(e_i) \right) \cdot \big[ 1 \; / \; t_1, \ldots, t_d \,\big]\\
&= (-1)^{\binom{d+1}{2}} \tr\left( \psi \circ \phi \circ \lambda_1 \cdots \lambda_d \circ \partial \right)^0 \cdot \big[ 1 \; / \; t_1, \ldots, t_d \,\big]\\
&= \plangle \psi \circ \phi \prangle
\end{align*}
as claimed.
\end{proof}
Finally we enumerate some basic properties of the pretrace map
\[
\plangle - \prangle: \cat{T}(X,X[d-1]) \lto H^d_{\mf{m}}(R)\,.
\]
Since the pairing $\plangle -, - \prangle$ is canonically defined and $\plangle \psi \prangle = \plangle \psi, 1 \prangle$ we see that $\plangle - \prangle$ does not depend on a choice of system of parameters or null-homotopies.

\begin{lemma}\label{lemma:ptrproperties} The pretrace map has the following properties:
\begin{itemize}
\item[(i)] For morphisms $\psi: Y \lto X[d-1]$ and $\phi: X \lto Y$, $\plangle \psi \circ \phi \prangle = \plangle \phi \circ \psi \prangle$.
\item[(ii)] For a morphism $\psi: X \lto X[d-1]$, $\plangle \psi \prangle_X = (-1)^d \cdot \plangle \psi \prangle_{X[1]}$.
\end{itemize}
\end{lemma}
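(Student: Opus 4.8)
The plan is to deduce (i) formally from the naturality of the Auslander pairing, and to establish (ii) by a direct computation keeping track of the signs introduced by the shift functor. Recall from Theorem~\ref{theorem:genfracformula} that $\plangle\psi,\phi\prangle = \plangle\psi\circ\phi\prangle$, and that $\plangle\alpha\prangle = \plangle\alpha,1\prangle$, so both parts are statements purely about the pairing $\plangle -, - \prangle$. For (i) one first interprets $\phi\circ\psi$ as the composite $\serre(\phi)\circ\psi = \phi[d-1]\circ\psi\colon Y \lto Y[d-1]$ (the only composite that typechecks), so that $\plangle\phi\circ\psi\prangle = \plangle\serre(\phi)\circ\psi,1_Y\prangle$. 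Then apply the naturality of the isomorphism of Theorem~\ref{theorem:ausdual} in the \emph{first} variable to the morphism $\phi\colon X \lto Y$: this reads $\plangle\serre(\phi)\circ\psi', h\prangle_{(Y,Y)} = \plangle\psi', h\circ\phi\prangle_{(X,Y)}$ for all $\psi'\colon Y \lto X[d-1]$ and $h\colon Y \lto Y$, and with $\psi' = \psi$, $h = 1_Y$ this becomes $\plangle\serre(\phi)\circ\psi,1_Y\prangle = \plangle\psi,\phi\prangle = \plangle\psi\circ\phi\prangle$, which is (i). Only naturality is used here, not compatibility with suspension (the latter being deferred precisely to this lemma), and naturality was noted to be clear from the construction in the proof of Theorem~\ref{theorem:ausdual}.

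For (ii), fix a system of parameters $\bs{t}$ acting null-homotopically on $X$ via homotopies $\lambda_i$, and write $\partial$ for the differential of $X$. Then $X[1]$ has differential $-\partial$, and $-\lambda_i$ are null-homotopies for $\bs{t}$ on $X[1]$, since $(-\lambda_i)(-\partial) + (-\partial)(-\lambda_i) = \lambda_i\partial + \partial\lambda_i = t_i\cdot 1$. Writing $(-1)^F$ for the grading operator of $X[1]$ and recalling that $\psi[1]$ has the same underlying graded map as $\psi$, the operator of (\ref{eq:defnptr0}) computed for $X[1]$ is
\[
(-1)^F\,\psi\,(-\lambda_1)\cdots(-\lambda_d)\,(-\partial) \;=\; (-1)^{d+1}\,(-1)^F\,\psi\,\lambda_1\cdots\lambda_d\,\partial .
\]
By Lemma~\ref{lemma:ptrandinteger} we may evaluate the trace in the degree $-1$ component of $X[1]$, that is, on $X^0$; there the grading operator of $X[1]$ contributes $-1$, whereas that of $X$ contributes $+1$ in degree $0$. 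Hence the degree $-1$ trace for $X[1]$ equals $(-1)^{d+1}(-1) = (-1)^d$ times the degree $0$ trace for $X$, and multiplying by the common prefactor $(-1)^{\binom{d+1}{2}}$ and forming generalised fractions over the same $\bs{t}$ gives $\plangle\psi\prangle_{X[1]} = (-1)^d\,\plangle\psi\prangle_X$, which is (ii).

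The only genuinely delicate point is the sign bookkeeping in (ii): there are three independent sources of sign under the shift — the grading operator $(-1)^F$, the $d$ null-homotopies $\lambda_i$, and the differential $\partial$ — and one must also choose the homological degree in which the trace is taken so that the relevant free modules match up, the freedom to do so being supplied by Lemma~\ref{lemma:ptrandinteger}. Making these three contributions and the degree shift consistent with the conventions fixed in Section~\ref{section:cinjres} is the entire content of the argument.
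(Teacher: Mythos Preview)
Your proof is correct and follows the same approach as the paper: part (i) is deduced from naturality of the isomorphism in Theorem~\ref{theorem:ausdual} (the paper says only this one line), and part (ii) uses the null-homotopies $-\lambda_j$ on $X[1]$ together with Lemma~\ref{lemma:ptrandinteger}, which is exactly the paper's argument. You have simply made the sign bookkeeping for (ii) explicit, which the paper leaves to the reader.
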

\begin{proof}
(i) is an immediate consequence of the naturality of the isomorphism in Theorem \ref{theorem:ausdual}. If we use the null-homotopies $\lambda'_j = -\lambda_j$ for the action of $t_j$ on $X[1]$, then (ii) follows from Lemma \ref{lemma:ptrandinteger}.
\end{proof}

\begin{remark}\label{remark:gradedcomm} In the formula (\ref{eq:defnptragain}) the maps $\psi, \lambda_1,\ldots,\lambda_d, \partial$ are graded commutative, that is, if $a,b$ stand for one of these maps then interchanging $ab$ with $(-1)^{|a||b|} ba$ does not change the cohomology class of the fraction in $H^d_{\mf{m}}(R)$. This follows by the arguments of Remark \ref{remark:orderingandsigns}, and Lemma \ref{lemma:ptrandinteger}.
\end{remark}

\subsection{The case of $k$-algebras}\label{section:dualisingpairsyo}

Let us now assume in addition that $R$ is a $k$-algebra. In this section we elaborate on the isomorphism (\ref{eq:firstoccurdualisingpair}) and its consequences for duality. An $R$-module $M$ is called \emph{$\mf{m}$-torsion} if every element of $M$ is annihilated by some power of $\mf{m}$. The Hom-spaces $\cat{T}(X,Y)$ are finite-length and therefore $\mf{m}$-torsion, and it is evident from (\ref{eq:defn_localcohomology}) that the infinite module $H^d_{\mf{m}}(R)$ is also $\mf{m}$-torsion. 

A \emph{dualising pair} $(E,\zeta)$ is an $\mf{m}$-torsion module $E$ together with a $k$-linear map $\zeta: E \lto k$ which is universal, in the sense that for every $\mf{m}$-torsion module $M$ the map induced by $\zeta$
\begin{equation}\label{eq:dualising_pair_defn}
\Hom_R(M,E) \lto \Hom_k(M,k)
\end{equation}
is an isomorphism. Clearly dualising pairs are unique up to isomorphism. 

Every $\mf{m}$-torsion module is a direct limit of its finite-length submodules and $\Hom_k(-,k)$ is exact, so (\ref{eq:dualising_pair_defn}) is an isomorphism for all $M$ if and only if it is an isomorphism for $M = k$. Thus $(E, \zeta)$ is a dualising pair if and only if the socle $\Hom_R(k,E)$ is one-dimensional and $\zeta$ is nonzero on the socle. In this case there is a unique $R$-linear map $\iota: k \lto E$ such that $\zeta \circ \iota = 1$, and one can check that $E$ is an injective $R$-module and that $\iota$ is an injective envelope. 

Reversing this, it is easy to produce a dualising pair abstractly: if $E$ is an injective envelope of $k$ then any $k$-linear map $\zeta: E \lto k$ which does not vanish on the socle must be a dualising pair; see for example \cite[Proposition 0.4]{Salas02}. Since $R$ is Gorenstein we know that $H^d_{\mf{m}}(R)$ is an injective envelope of $k$, so a dualising pair $(H^d_{\mf{m}}(R), \zeta)$ exists.

The following is immediate from Theorem \ref{theorem:ausdual} once we choose a dualising pair:

\begin{corollary} Given $X, Y \in \cat{T}$ there is a nondegenerate pairing
\[
\langle -, - \rangle: \cat{T}(Y, X[d-1]) \otimes_k \cat{T}(X,Y) \lto k
\]
defined by $\langle \psi, \phi \rangle = \zeta \plangle \psi, \phi \prangle$ which is natural in both variables and compatible with suspension.
\end{corollary}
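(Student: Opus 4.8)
The plan is to obtain everything by post-composing the isomorphism of Theorem~\ref{theorem:ausdual} with the map $\zeta$ of the chosen dualising pair $(H^d_{\mf{m}}(R),\zeta)$. Since $R$ is now assumed to be a $k$-algebra, Lemma~\ref{lemma:homs_finite_length} shows that $\cat{T}(X,Y)$ has finite length and is in particular $\mf{m}$-torsion; the same holds for $\cat{T}(Y,X[d-1])$, and both become finite-dimensional $k$-vector spaces. By the defining universal property of a dualising pair, post-composition with $\zeta$ gives a $k$-linear isomorphism
\[
\zeta_\ast\colon\ \Hom_R\big(\cat{T}(X,Y),\,H^d_{\mf{m}}(R)\big)\ \xlto{\sim}\ \Hom_k\big(\cat{T}(X,Y),\,k\big)\,.
\]
Composing $\zeta_\ast$ with the isomorphism of Theorem~\ref{theorem:ausdual} produces a $k$-linear isomorphism $\cat{T}(Y,X[d-1])\cong\Hom_k(\cat{T}(X,Y),k)$. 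Unwinding the definitions — recall that, by~(\ref{eq:duality1}), the pairing $\plangle-,-\prangle$ is precisely the one adjoint to the isomorphism of Theorem~\ref{theorem:ausdual} — this composite sends $\psi$ to the functional $\phi\mapsto\zeta\plangle\psi,\phi\prangle$. Since $\cat{T}(Y,X[d-1])\to\Hom_k(\cat{T}(X,Y),k)$ is thus an isomorphism and both spaces are finite-dimensional over $k$, the adjoint map in the other variable is automatically an isomorphism as well, so $\langle-,-\rangle$ is nondegenerate.

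For naturality, I would note that $\zeta_\ast$ is a natural transformation in its argument (it is functoriality of $\Hom_R(-,H^d_{\mf{m}}(R))$ followed by a fixed map), so composing it with the isomorphism of Theorem~\ref{theorem:ausdual}, which is natural in $X$ and $Y$, preserves naturality in both variables; the resulting pairing $\langle-,-\rangle$ is then natural in $X$ and $Y$. Compatibility with suspension is inherited in the same mechanical way: apply $\zeta_\ast$ throughout the compatibility square of Theorem~\ref{theorem:ausdual} (the diagram displayed in Section~\ref{section:prelim}); since $\zeta_\ast$ is a natural transformation it carries that commuting square of $H^d_{\mf{m}}(R)$-valued Hom-spaces to a commuting square of $k$-valued Hom-spaces, which is exactly the required compatibility condition for $\langle-,-\rangle$.

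I do not anticipate any genuine obstacle here: all the substance already resides in Theorem~\ref{theorem:ausdual}, in the construction of complete injective resolutions, and in the existence of a dualising pair over a Gorenstein $k$-algebra, all of which were established above. The only point deserving a moment's attention is verifying that the composite isomorphism really does send $\psi$ to $\phi\mapsto\zeta\plangle\psi,\phi\prangle$, and not to some sign- or order-twisted variant; but this is immediate once one uses that $\plangle-,-\prangle$ was \emph{defined} as the pairing adjoint to the isomorphism of Theorem~\ref{theorem:ausdual}, and that forming this adjoint commutes with post-composition by $\zeta$.
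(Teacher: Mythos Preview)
Your proposal is correct and is exactly the argument the paper has in mind: the paper states the corollary as ``immediate from Theorem~\ref{theorem:ausdual} once we choose a dualising pair'' and gives no further proof, so you have simply spelled out the details of that immediate deduction. The only nuance is that the paper never proves compatibility with suspension directly in Theorem~\ref{theorem:ausdual} (it defers to Lemma~\ref{lemma:ptrproperties}), but your inheritance argument applies equally well once that lemma is in place.
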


\begin{remark} Let $S$ denote the functor $(-)[d-1]$ on $\cat{T}$. Together with the isomorphism $S \circ [1]  \cong [1] \circ S$ given by $(-1)^{d-1} \cdot 1_{[d]}$ this is a triangulated functor, and it follows from Lemma \ref{lemma:ptrproperties} and the previous corollary that $\cat{T}$ is $(d-1)$-Calabi-Yau in the sense of \cite[Proposition 2.2]{Keller08}.
\end{remark}

To obtain a concrete nondegenerate pairing on the morphism spaces of $\cat{T}$ it only remains to find an explicit $k$-linear functional $\zeta$ on $H^d_{\mf{m}}(R)$ which is nonvanishing on the socle. Such a functional is unique up to an automorphism, but fixing this automorphism is quite subtle: while any element of $H^d_{\mf{m}}(R)$ can be presented as a generalised fraction, this presentation is not unique, so it is a challenge to assign scalars to fractions in a way which is well-defined. 

This is the classical problem of defining residues of meromorphic differential forms on algebraic varieties, solved by Serre \cite[Ch. II]{Serre59} and Tate \cite{Tate68} for curves and generalised by Grothendieck to arbitrary varieties \cite{ResiduesDuality,Conrad00}. The theory of residue symbols is extensive, and we only sketch the parts we need; for more details see \cite[\S 5.3]{Lipman01}, \cite[pp.64--67]{Lipman84} or \cite{Lipman87,Kunz90,Kunz08}.

We begin with residues over power series rings, and then move on to singular rings. 

\begin{example}\label{example:residuespowerseries} If $S = k\llbracket x_1,\ldots,x_n \rrbracket$ there is a canonical $k$-linear map $\Res_{S/k}: H^n_{\mf{m}}(\omega_S) \lto k$ with
\begin{equation}\label{eq:eval_rule_gen_frac}
\Res_{S/k}\!\begin{bmatrix} \ud x_1 \wedge \cdots \wedge \ud x_n \\ x_1^{e_1},\ldots,x_n^{e_n} \end{bmatrix} = \begin{cases} 1 & e_1 = \cdots = e_n = 1,\\ 0 & \text{otherwise}. \end{cases}
\end{equation}
Here $\omega_S$ is a suitably defined module of top-degree differential forms. This determines the value of $\Res_{S/k}$ on any generalised fraction with powers of the variables in the denominator. If $\bs{f}$ is a regular sequence of length $n$ in $S$ then there exist integers $e_1,\ldots,e_n \ge 1$ such that $x_i^{e_i} \in (\bs{f})S$, say
\[
x_i^{e_i} = \sum_j a_{ij} f_j.
\]
Then by the transformation rule of Proposition \ref{prop:transition_det} (with $\ud V := \ud x_1 \wedge \cdots \wedge \ud x_n$)
\begin{equation}\label{eq:grothendieck_res_symbol}
\Res_{S/k}\!\begin{bmatrix} s \cdot \ud V \\ f_1,\ldots,f_n \end{bmatrix} = \Res_{S/k}\!\begin{bmatrix} s \cdot \det(a_{ij}) \cdot \ud V \\ x_1^{e_1},\ldots,x_n^{e_n} \end{bmatrix}\,
\end{equation}
so (\ref{eq:eval_rule_gen_frac}) uniquely determines the value of $\Res_{S/k}$ on every generalised fractions. This is called, for obvious reasons, the \emph{residue symbol}. In the case $k = \mathbb{C}$ the residue symbol can also be constructed by integration, see \cite[Chapter V]{Griffiths}. To see that the definition in terms of local cohomology agrees with the analytic definition, it suffices to observe that both constructions obey the transformation rule and both have the same values on the basic fractions in (\ref{eq:eval_rule_gen_frac}).

Here we trivialise $\omega_S \cong S$ via the generator $\ud V$ and note that the map $\Res_{S/k}: H^n_{\mf{m}}(S) \lto k$ thus defined is nonvashing on the socle (the fraction in (\ref{eq:eval_rule_gen_frac}) with all $e_i = 1$ generates the socle, which is one-dimensional). Thus $(H^n_{\mf{m}}(S), \Res_{S/k})$ is a dualising pair for $S$.
\end{example}

From now on suppose that that our local Gorenstein $k$-algebra $R$ is a complete intersection, i.e.
\[
S = k\llbracket x_1, \ldots, x_n \rrbracket, \qquad R = S / (f_1,\ldots,f_c)
\]
for a regular sequence $\bs{f} = (f_1,\ldots,f_c)$ in $\mf{m}$. Hence $n = d + c$. In order to canonically define residues over $R$, one has to introduce modules of regular differential forms \cite{Kunz90}. But if one simply wants to construct a dualising pair for $R$ which is computable in terms of residues over $S$, one can proceed more directly by relating generalised fractions over $R$ and $S$.

\begin{proposition}\label{prop:tauinclusion} There is a well-defined $S$-linear map
\[
\tau: H^d_{\mf{m}}(R) \lto H^n_{\mf{m}}(S),
\]
which for a system of parameters $\bs{t}$ in $R$ is given by
\[
\tau \begin{bmatrix} r \\ t_1, \ldots, t_d \end{bmatrix} = \begin{bmatrix} r \\ f_1, \ldots, f_c, t_1, \ldots, t_d \end{bmatrix}\,.
\]
Moreover $\tau$ is injective and defines an isomorphism between $H^d_{\mf{m}}(R)$ and the submodule of elements of $H^n_{\mf{m}}(S)$ annihilated by the ideal $(\bs{f})$.
\end{proposition}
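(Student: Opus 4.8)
The plan is to construct $\tau$ intrinsically as a composite of connecting homomorphisms attached to the regular sequence $\bs{f}$, and then to read off both the explicit formula and the description of the image from that construction. First I would set up the filtration: for $0 \le j \le c$ put $R_j = S/(f_1,\ldots,f_j)$, so that $R_0 = S$, $R_c = R$, and $f_{j+1}$ is a nonzerodivisor on $R_j$. Each $R_j$ is a complete intersection, hence Cohen--Macaulay of dimension $n-j$, so $H^i_{\mf{m}}(R_j) = 0$ for $i \ne n-j$ (here local cohomology over $S$ and over $R_j$ coincide). Applying $H^\bullet_{\mf{m}}(-)$ to the short exact sequence $0 \to R_j \xlto{f_{j+1}} R_j \to R_{j+1} \to 0$ and invoking this vanishing, the long exact sequence collapses to
\[
0 \lto H^{n-j-1}_{\mf{m}}(R_{j+1}) \xlto{\delta_j} H^{n-j}_{\mf{m}}(R_j) \xlto{f_{j+1}} H^{n-j}_{\mf{m}}(R_j) \lto 0\,.
\]
Thus each connecting map $\delta_j$ is an injective $S$-linear map which identifies $H^{n-j-1}_{\mf{m}}(R_{j+1})$ with $(0 :_{H^{n-j}_{\mf{m}}(R_j)} f_{j+1})$.

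I would then define $\tau := \delta_0 \circ \delta_1 \circ \cdots \circ \delta_{c-1} \colon H^d_{\mf{m}}(R) \to H^n_{\mf{m}}(S)$, which is injective as a composite of injections. For the image, a short descending induction on $j$ shows that $\delta_j \circ \cdots \circ \delta_{c-1}$ has image $\bigcap_{i=j+1}^{c}(0 :_{H^{n-j}_{\mf{m}}(R_j)} f_i)$; the inductive step uses only that $\delta_j$ is injective and $S$-linear, so it carries $\ker(f_i)$ in the source onto $\Im(\delta_j) \cap \ker(f_i)$ in the target. At $j=0$ this gives $\Im(\tau) = \bigcap_{i=1}^{c}(0 :_{H^n_{\mf{m}}(S)} f_i) = (0 :_{H^n_{\mf{m}}(S)} (\bs{f}))$, which is the asserted description of the image. (As a sanity check this is forced abstractly: $H^n_{\mf{m}}(S)$ is an injective hull of $k$ over $S$, so $(0 :_{H^n_{\mf{m}}(S)}(\bs{f})) = \Hom_S(R, H^n_{\mf{m}}(S))$ is an injective hull of $k$ over $R$, hence a copy of $H^d_{\mf{m}}(R)$ since $R$ is Gorenstein of dimension $d$.)

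The remaining task is to evaluate $\tau$ on a generalised fraction. Fixing a system of parameters $\bs{t}$ for $R$ and lifting it to $S$, the sequence $(f_{j+1},\ldots,f_c,t_1,\ldots,t_d)$ is a system of parameters for each $R_j$, and $H^{n-j}_{\mf{m}}(R_j)$ is spanned by generalised fractions with this sequence as denominator. Chasing a cocycle through the \v{C}ech complexes of these parameter systems — lift along $R_j \twoheadrightarrow R_{j+1}$, apply the \v{C}ech differential, and divide the result (which lands in $f_{j+1}R_j$) by $f_{j+1}$ — should show that, up to a sign fixed by the conventions of Section~\ref{section:cinjres},
\[
\delta_j\begin{bmatrix} r \\ f_{j+2},\ldots,f_c,t_1,\ldots,t_d \end{bmatrix} = \begin{bmatrix} r \\ f_{j+1},f_{j+2},\ldots,f_c,t_1,\ldots,t_d \end{bmatrix}\,,
\]
i.e.\ $\delta_j$ simply prepends $f_{j+1}$ to the denominator. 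This is compatible with the transformation rule (Proposition~\ref{prop:transition_det}), so it does not depend on the chosen denominators. Composing $\delta_0,\ldots,\delta_{c-1}$ then yields $\tau[\, r \,/\, t_1,\ldots,t_d\,] = [\, r \,/\, f_1,\ldots,f_c,t_1,\ldots,t_d\,]$; since the right-hand side is manifestly killed by $(\bs{f})$ and is transformation-rule compatible, this also re-establishes that $\tau$ is well defined by the stated formula, independently of $\bs{t}$.

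The genuinely technical point — the main obstacle — is the \v{C}ech cocycle computation identifying $\delta_j$ with ``prepend $f_{j+1}$'', and in particular pinning down the sign so that it comes out as stated; everything else is formal homological algebra together with the Cohen--Macaulay vanishing of local cohomology of the $R_j$. If one only wants well-definedness of $\tau$ by the formula, together with injectivity and the image, without the closed form for each $\delta_j$, an alternative is to check directly that the assignment $[\, r \,/\, t_1,\ldots,t_d\,] \mapsto [\, r \,/\, f_1,\ldots,f_c,t_1,\ldots,t_d\,]$ respects both families of relations in $H^d_{\mf{m}}(R)$ (those from the \v{C}ech differential and those from the transformation rule), and then to identify the resulting map with the connecting-homomorphism composite above, whence injectivity and the identification of the image follow as before.
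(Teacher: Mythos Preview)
Your proposal is correct and the approach is genuinely different from the paper's. You build $\tau$ as a composite of connecting homomorphisms in local cohomology attached to the short exact sequences $0 \to R_j \xlto{f_{j+1}} R_j \to R_{j+1} \to 0$, using only Cohen--Macaulay vanishing and a \v{C}ech cocycle chase to identify each $\delta_j$ with ``prepend $f_{j+1}$ to the denominator''. The paper instead passes through residual complexes: it takes a minimal injective resolution $\omega_S \to I_S$, identifies $I_R$ with $\Hom_S(R,I_S)[1]$, computes the resolution map $\kappa$ explicitly via the operators $\partial_{[W]}$, and then invokes the theorem of Sastry--Yekutieli (Theorem \ref{theorem:sastry_yek}) expressing a generalised fraction as $\partial_{[s_n]} \circ \cdots \circ \partial_{[s_1]}(\eta(\gamma)/s_1\cdots s_n)$ to prove the identity $[\overline{\gamma}/t_2,\ldots,t_n]_R = [\gamma/W,t_2,\ldots,t_n]_S$ in the codimension-one case; the general statement follows by iteration, which is exactly your filtration.

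What each approach buys: yours is self-contained and elementary, needing nothing beyond the long exact sequence in local cohomology and the \v{C}ech description of $H^\bullet_{\mf{m}}$, but as you rightly flag, the sign in the \v{C}ech snake-lemma chase must be tracked carefully against the paper's conventions in Section~\ref{section:cinjres}. The paper's route offloads precisely this sign bookkeeping to the Sastry--Yekutieli formula (which packages the signs into the $(-1)^{\binom{n}{2}}$ and the composition of $\partial_{[t]}$'s), at the cost of importing the machinery of residual complexes and an external reference. Your parenthetical sanity check that $(0:_{H^n_{\mf{m}}(S)}(\bs{f})) \cong \Hom_S(R,H^n_{\mf{m}}(S))$ is an injective hull of $k$ over $R$ is exactly the abstract fact underlying the paper's identification $I_R = \Hom_S(R,I_S)[1]$, so the two arguments are closely parallel at the structural level even though the computational engines differ.
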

\begin{proof}
The proof is technical and we have relegated it to Appendix \ref{section:resid_genfrac}. See Lemma \ref{lemma:genfracquotientsrelate}.
\end{proof}

The residue map $\Res_{S/k}: H^n_{\mf{m}}(S) \lto k$ is a dualising pair for $S$, and the composite
\begin{gather*}
\zeta := \Res_{S/k}\, \circ\, \tau: H^d_{\mf{m}}(R) \lto H^n_{\mf{m}}(S) \lto k,\\
\zeta\! \begin{bmatrix} r \\ t_1, \ldots, t_d \end{bmatrix} = \Res_{S/k} \! \begin{bmatrix} r \\ f_1, \ldots, f_c, t_1, \ldots, t_d \end{bmatrix}
\end{gather*}
is easily seen to be a dualising pair for $R$. Combining this dualising pair with Theorem \ref{theorem:genfracformula} we have a more explicit form of the duality for complete intersections:

\begin{corollary}\label{corollary:ciresformula} When $R$ is a complete intersection there is a nondegenerate pairing
\[
\langle -, - \rangle: \cat{T}(Y, X[d-1]) \otimes_k \cat{T}(X,Y) \lto k
\]
defined in the above notation by
\[
\langle \psi, \phi \rangle = (-1)^{\binom{d+1}{2}} \Res_{S/k}\! \begin{bmatrix} \tr_R\left( \psi \circ \phi \circ \lambda_1 \cdots \lambda_d \circ \partial \right)^0 \\ f_1, \ldots, f_c, t_1, \ldots, t_d \end{bmatrix}
\]
which is natural in both variables and compatible with suspension.
\end{corollary}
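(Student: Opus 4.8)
The plan is to read off the formula by combining Theorem~\ref{theorem:genfracformula} with the dualising pair $(H^d_{\mf{m}}(R),\zeta)$, $\zeta = \Res_{S/k}\circ\tau$, assembled just above the statement, and then invoke the (unnumbered) corollary asserting that composition with any dualising pair upgrades $\plangle-,-\prangle$ to a nondegenerate $k$-valued pairing.

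First I would check that $\zeta$ genuinely is a dualising pair, i.e. that it does not vanish on the socle of $H^d_{\mf{m}}(R)$. By Example~\ref{example:residuespowerseries} the pair $(H^n_{\mf{m}}(S),\Res_{S/k})$ is dualising, so $\Res_{S/k}$ is nonzero on the one-dimensional socle of $H^n_{\mf{m}}(S)$; by Proposition~\ref{prop:tauinclusion} the map $\tau$ is injective and $S$-linear. A socle generator of $H^d_{\mf{m}}(R)$ is killed by $\mf{m}$, hence by the maximal ideal $\mf{m}_S$ of $S$ (which acts through the surjection $\mf{m}_S \twoheadrightarrow \mf{m}$), so its image under the $S$-linear map $\tau$ lies in the socle of $H^n_{\mf{m}}(S)$ and is nonzero since $\tau$ is injective. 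Therefore $\zeta = \Res_{S/k}\circ\tau$ is nonzero on the socle, and $(H^d_{\mf{m}}(R),\zeta)$ is a dualising pair. The cited corollary then gives that $\langle\psi,\phi\rangle := \zeta\plangle\psi,\phi\prangle$ is a nondegenerate pairing, natural in both variables and compatible with suspension.

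Second I would compute the right-hand side. By Theorem~\ref{theorem:genfracformula},
\[
\plangle\psi,\phi\prangle = (-1)^{\binom{d+1}{2}}\begin{bmatrix} \tr(\psi\circ\phi\circ\lambda_1\cdots\lambda_d\circ\partial)^0 \\ t_1,\ldots,t_d\end{bmatrix} \in H^d_{\mf{m}}(R),
\]
where the trace is the ordinary trace of an $R$-endomorphism of the finite free module $X^0$, so $\tr = \tr_R$. Note that $\bs{f},\bs{t}$ is a system of parameters for $S$, since $\bs{f}$ is $S$-regular and $\bs{t}$ is a system of parameters for $R = S/(\bs{f})$, so the fraction $[\,r\,/\,f_1,\ldots,f_c,t_1,\ldots,t_d\,]$ over $S$ is defined for any lift $r \in S$ of the trace, and is independent of that lift by Proposition~\ref{prop:tauinclusion}. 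Applying $\zeta = \Res_{S/k}\circ\tau$ and using the formula for $\tau$ in that proposition gives
\[
\langle\psi,\phi\rangle = \zeta\plangle\psi,\phi\prangle = (-1)^{\binom{d+1}{2}}\Res_{S/k}\begin{bmatrix} \tr_R(\psi\circ\phi\circ\lambda_1\cdots\lambda_d\circ\partial)^0 \\ f_1,\ldots,f_c,t_1,\ldots,t_d\end{bmatrix},
\]
which is the asserted formula.

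Since this corollary is essentially a bookkeeping consequence of Theorem~\ref{theorem:genfracformula} and Proposition~\ref{prop:tauinclusion}, there is no substantial obstacle here; the only points that deserve explicit mention are the socle check above (to know $\zeta$ is a dualising pair) and the fact that the $R$-valued trace lifts to $S$ without affecting the fraction, which is subsumed in Proposition~\ref{prop:tauinclusion}.
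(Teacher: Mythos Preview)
Your proposal is correct and follows exactly the route the paper indicates: the paper simply says that $\zeta = \Res_{S/k}\circ\tau$ ``is easily seen to be a dualising pair for $R$'' and that the corollary follows by ``combining this dualising pair with Theorem~\ref{theorem:genfracformula}''. You have just unpacked these two sentences, supplying the socle argument for the first and the substitution via Proposition~\ref{prop:tauinclusion} for the second.
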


\section{Matrix factorisations}\label{section:mfs}

Throughout $k$ is a field of characteristic zero, $S = k\llbracket x_1,\ldots,x_n \rrbracket$, and $W \in S$ is a polynomial chosen such that the zero locus $\{ W = 0 \}$ in $\mathbb{A}^n_k$ has an isolated singularity at the origin. Then $R = S/(W)$ is a local Gorenstein $k$-algebra with an isolated singularity, and the results of the previous section apply to the triangulated category $\K_{\ac}(\free R)$. The arguments work more generally for any regular local $k$-algebra $S$, but for simplicity we stick to power series.

Throughout all matrix factorisations will defined over $S$ and factorise $W$, and $\partial_i$ denotes $\partial/\partial x_i$. Recall the construction, given in the introduction, of a $\mathbb{Z}$-graded complex $\perd{X}$ of finite free $R$-modules from any matrix factorisation $X$.

\begin{lemma}[(\cite{Buchweitz, Eisenbud80})] $\perd{X}$ is an acyclic complex of finite free $R$-modules and the functor
\[
\perd{(-)}: \hmf(S, W) \lto \K_{\ac}(\free R)
\]
is an equivalence.
\end{lemma}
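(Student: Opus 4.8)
The plan is to verify the two assertions separately: first that $\perd{X}$ is an acyclic complex of finite free $R$-modules, and then that $\perd{(-)}$ is an equivalence of triangulated categories. For the first point, the finiteness and freeness are immediate from the construction, since each term of $\perd{X}$ is $X^0 \otimes_S R$ or $X^1 \otimes_S R$, and $X^0, X^1$ are finite free over $S$. To see that the differential squares to zero on $\perd{X}$, note that over $R = S/(W)$ the relation $d^2 = W \cdot 1_X$ becomes $d^2 = 0$. For acyclicity, I would argue locally: away from the origin the singularity is isolated, so $W$ is part of a regular system of parameters and a standard computation with the Koszul-type relation $d \circ d = W$ shows the complex is even contractible there; at the origin one uses that $W$ is a nonzerodivisor on $S$, so that the two-periodic complex $\cdots \to X^1 \otimes R \to X^0 \otimes R \to X^1 \otimes R \to \cdots$ is exact precisely because $d^0 d^1 = d^1 d^0 = W$ acts as a nonzerodivisor on the free $S$-modules $X^0, X^1$ — this is Eisenbud's original argument in \cite{Eisenbud80}, which I would simply cite.

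For the equivalence, I would break it into three steps: the functor is well-defined on homotopy categories, it is fully faithful, and it is essentially surjective. Well-definedness is routine: a homotopy between maps of matrix factorisations, tensored with $R$, gives a homotopy between the periodifications, because the correction terms involving $W$ vanish mod $W$. For essential surjectivity one invokes Eisenbud's theorem that every acyclic complex of finite free $R$-modules is, up to homotopy equivalence, two-periodic; such a two-periodic complex $T$ lifts to a matrix factorisation by choosing arbitrary $S$-linear lifts of the two alternating differentials $T^0 \to T^1$ and $T^1 \to T^0$ and checking that the two composites, which reduce to zero mod $W$ and hence are multiplication by $W$ up to a unit after suitable normalisation, can be adjusted to make the composite exactly $W \cdot 1$. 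For full faithfulness, one shows the map $\hmf(S,W)(X,Y) \to \K_{\ac}(\free R)(\perd{X},\perd{Y})$ is bijective: a morphism of the periodifications is a compatible system of $R$-linear maps, which by freeness lifts to $S$-linear maps, and the obstruction to these lifted maps commuting with the $S$-differentials lands in $W \cdot \Hom_S(X,Y)$, which one kills by modifying the lift; injectivity on homotopy classes is handled the same way. Alternatively — and this is cleaner — one can cite Buchweitz \cite{Buchweitz}, who establishes exactly this equivalence.

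The main obstacle is bookkeeping rather than conceptual: making the lifting arguments precise requires keeping careful track of the $\Ztwo$-grading versus the $\ZZ$-grading and of the correction terms proportional to $W$, and in the essential surjectivity step one must be slightly careful that the two lifted composites $\widetilde{d^0}\widetilde{d^1}$ and $\widetilde{d^1}\widetilde{d^0}$ can be simultaneously normalised to $W \cdot 1$ and not just to $W \cdot u$ for a unit $u$; this is where Eisenbud's periodicity theorem does the real work. Since all of this is classical and the statement is attributed to \cite{Buchweitz, Eisenbud80}, the honest approach in the paper is to give the one-line reductions and defer to those references for the details.
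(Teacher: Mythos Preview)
Your acyclicity argument is over-complicated and the local framing is a red herring. You do not need to distinguish ``away from the origin'' from ``at the origin'': the paper gives a three-line global argument. If $x \in X^i$ has $d(x) \in W \cdot X^{i+1}$, write $d(x) = W \cdot y = d(d(y))$; since $d \circ d = W \cdot 1$ is injective ($W$ a nonzerodivisor on the free $S$-module), $d$ itself is injective, hence $x = d(y)$. Your remark that the complex is contractible on the punctured spectrum is true but is a consequence of acyclicity plus regularity there, not a step in proving acyclicity.

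For the equivalence, you take a genuinely different route from the paper. The paper does not verify fully-faithful and essentially-surjective directly; instead it factors through the stable category $\underline{\CM}(R)$ via the commutative triangle
\[
\xymatrix{
\hmf(S,W) \ar[rr]^{\perd{(-)}} \ar[dr]_{\Coker(d^1)} && \K_{\ac}(\free R) \ar[dl]^{\Coker(\partial^{-1})}\\
& \underline{\CM}(R) &
}
\]
and then quotes Eisenbud for the left leg and Buchweitz for the right leg. This two-out-of-three argument avoids exactly the bookkeeping you flag. In particular, your full-faithfulness sketch has a genuine gap: a $\ZZ$-graded chain map $\perd{X} \to \perd{Y}$ need not be two-periodic, so before you can lift it to $S$ you must argue that it is \emph{homotopic} to a two-periodic map. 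That step is not automatic and is precisely what the detour through $\underline{\CM}(R)$ buys you: the induced map on cokernels is a single module map, which Eisenbud's equivalence lifts back to a morphism of matrix factorisations. Your direct approach can be made to work, but it amounts to re-proving pieces of the cited results rather than invoking them.
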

\begin{proof}
Let us recall the proof that $\overline{X}$ is acyclic. If $x \in Z^i(\overline{X})$ then $d_X^i(x) \in W \cdot X^{i+1}$ and hence $d^i_X(x) = d^i_X \circ d^{i-1}_X(y)$ for some $y \in X^{i-1}$. But since $d^i_X \circ d^{i-1}_X = W \cdot 1_X$ is injective $d^i_X$ must be injective, whence $x = d^{i-1}_X(y)$ is a coboundary, and $\overline{X}$ is acyclic. There is a diagram of functors
\[
\xymatrix{
\hmf(S,W) \ar[dr]_-{F}\ar[rr]^-{\perd{(-)}} & & \K_{\ac}(\free R)\ar[dl]^-{G}\\
& \underline{\CM}(R)
}
\]
which commutes up to natural isomorphism, where $\underline{\operatorname{CM}}(R)$ is the stable category of maximal Cohen-Macaulay $R$-modules, $G(Y) = \Coker(\partial^{-1}_Y)$ and $F(X) = \Coker(d^{1}_X)$. The functor $G$ is an equivalence by \cite[(4.4.1)]{Buchweitz} and $F$ is an equivalence by \cite{Eisenbud80}, so periodification is also an equivalence.
\end{proof}

We know an explicit formula (see Corollary \ref{corollary:ciresformula}) for the nondegenerate pairing on the morphism spaces of $\cat{T} = \K_{\ac}(\free R)$, since $R$ is a complete intersection with $c = 1$. Using the equivalence of $\cat{T}$ with the homotopy category of matrix factorisations $\m = \hmf(S,W)$ we obtain a nondegenerate pairing on the morphism spaces of $\m$. Here is what we find:

\begin{theorem}\label{theorem:klformula} For matrix factorisations $X,Y$ there is a nondegenerate pairing
\begin{equation}\label{eq:klformulaproof1}
\langle -, - \rangle: \m(Y, X[n]) \otimes_k \m(X,Y) \lto k
\end{equation}
defined by
\[
\langle \psi, \phi \rangle = \frac{1}{n!}(-1)^{\binom{n-1}{2}} \Res_{S/k}\!\begin{bmatrix} \str_S\!\left(\psi \circ \phi \circ \ud_{S/k}(d_X)^{\wedge n}\right) \\ \partial_1 W, \ldots, \partial_n W \end{bmatrix}\,.
\]
which is natural in both variables and compatible with suspension.
\end{theorem}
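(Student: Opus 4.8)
The plan is to start from the explicit formula of Corollary \ref{corollary:ciresformula}, applied to the complete intersection $R = S/(W)$ with $c = 1$, $f_1 = W$ and $d = \dim R = n-1$, and transported to $\m$ along the equivalence $\perd{(-)}\colon\m\lto\cat T$. That corollary gives, for a matrix factorisation $X$ and \emph{any} system of parameters $\bs t = (t_1,\ldots,t_{n-1})$ acting null-homotopically on $\perd X$ together with null-homotopies $\lambda_i$,
\[
\langle\psi,\phi\rangle = (-1)^{\binom n2}\Res_{S/k}\!\begin{bmatrix}\tr_R\!\left(\psi\circ\phi\circ\lambda_1\cdots\lambda_{n-1}\circ\partial\right)^0\\ W,\,t_1,\ldots,t_{n-1}\end{bmatrix},
\]
with $\partial$ the differential of $\perd X$. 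The naturality and compatibility with suspension of the resulting pairing on $\m$ are immediate from those of Corollary \ref{corollary:ciresformula}, so the entire content lies in identifying this residue with the Kapustin--Li expression.

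First I would choose the data well. Since $\{W=0\}$ has an isolated singularity the Jacobian ideal $(\partial_1 W,\ldots,\partial_n W)$ is $\mf m$-primary in $S$, hence in $R$; I would pick, using Lemma \ref{lemma:special_reg_seq_exists}, a generic matrix $C=(c_{ij})\in k^{(n-1)\times n}$ so that $t_i := \sum_j c_{ij}\partial_j W$ is a system of parameters in $R$ \emph{and} so that the residue manipulation below is legitimate. Differentiating $d_X^2 = W\cdot 1_X$ gives $\partial_j(d_X)\circ d_X + d_X\circ\partial_j(d_X) = \partial_j W\cdot 1_X$, so $\lambda_i := \sum_j c_{ij}\partial_j(d_X)$, read off in the appropriate degrees of $\perd X$ and reduced modulo $W$, is a null-homotopy for multiplication by $t_i$; this is my choice of $\lambda_i$. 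Note that $\psi,\phi,\partial_j(d_X)$ are already honest $S$-linear maps on the free $S$-module $X^0\oplus X^1$, so after substituting and using Proposition \ref{prop:tauinclusion} to view the fraction in $H^n_\mf m(S)$, every ingredient on the right-hand side is visibly defined over $S$; the passage from the degree-zero trace over $R$ on $\perd X^0$ to a trace over $S$ of the corresponding $S$-linear operator on $X^0$ is harmless, since the fraction only depends on its numerator modulo $W$.

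The heart of the argument is then a residue computation over $S$. Expanding $\lambda_1\cdots\lambda_{n-1}$ into a sum, indexed by $(n-1)$-element subsets of $\{1,\ldots,n\}$, of minors of $C$ against ordered products $\partial_{j_1}(d_X)\cdots\partial_{j_{n-1}}(d_X)$, I would use the transformation rule (Proposition \ref{prop:transition_det}, in the flexible form of Example \ref{example:residuespowerseries}) to replace the denominator $(W,t_1,\ldots,t_{n-1})$ by the full Jacobian sequence $(\partial_1 W,\ldots,\partial_n W)$, and simultaneously trade the single occurrence of $\partial$ in the numerator for the ``missing'' partial derivative $\partial_{\hat k}(d_X)$, using the relations $d_X^2 = W$, $\{d_X,\partial_j(d_X)\} = \partial_j W$ and the graded-cyclicity of the supertrace; this is the step where the special choice of $\bs t$ from Lemma \ref{lemma:special_reg_seq_exists} is essential. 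Once the denominator is the Jacobian sequence, each $\partial_j W$ annihilates the fraction, so $d_X,\partial_1(d_X),\ldots,\partial_n(d_X)$ become graded-anticommuting modulo it (cf.\ Remarks \ref{remark:orderingandsigns} and \ref{remark:gradedcomm}), the $\tr^0$ turns into a supertrace $\str_S$, and the sum over subsets reassembles into the single operator $\str_S(\psi\circ\phi\circ\partial_1(d_X)\cdots\partial_n(d_X))$. Trivialising $\Omega^n_{S/k}$ by $\ud x_1\wedge\cdots\wedge\ud x_n$, one has $\ud_{S/k}(d_X)^{\wedge n} = (-1)^{\binom n2}\,n!\,\partial_1(d_X)\cdots\partial_n(d_X)$ modulo the same anticommutativity, which both explains the factor $\tfrac1{n!}$ and, together with the $(-1)^{\binom n2}$ from Corollary \ref{corollary:ciresformula} and the reordering and transformation-rule signs, produces the overall constant $\tfrac1{n!}(-1)^{\binom{n-1}2}$.

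The main obstacle is precisely the residue computation just sketched: making rigorous the passage from the denominator $(W,t_1,\ldots,t_{n-1})$ to the Jacobian sequence $(\partial_1 W,\ldots,\partial_n W)$ --- which is why one needs the special regular sequence of Lemma \ref{lemma:special_reg_seq_exists} rather than an arbitrary one --- and carrying out the trade of the differential $\partial$ for a partial derivative $\partial_{\hat k}(d_X)$ with all signs correct. A secondary, more mechanical difficulty is the bookkeeping in passing between the $\ZZ$-graded complex $\perd X$, the setting of Corollary \ref{corollary:ciresformula}, and the $\ZZ_2$-graded matrix factorisation $X$, the setting of the Kapustin--Li formula: one must track the degree-shift signs and the conversion of the degree-zero trace into a supertrace.
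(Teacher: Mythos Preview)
Your outline is the paper's approach: start from Corollary \ref{corollary:ciresformula} with $c=1$, $d=n-1$, choose the null-homotopies to be $k$-linear combinations of the $\partial_j(d_X)$ via Lemma \ref{lemma:special_reg_seq_exists}, and then manipulate the residue using the transformation rule together with the identities $d_X^2=W$ and $\{d_X,\partial_j(d_X)\}=\partial_jW\cdot 1_X$. The one tactical difference is in how the ``trade $\partial$ for the missing partial derivative'' step is organised. You take $C\in k^{(n-1)\times n}$ and plan to expand $\lambda_1\cdots\lambda_{n-1}$ over $(n-1)$-subsets and minors; the paper instead takes $C\in\operatorname{GL}_n(k)$, so that there is an \emph{extra} homotopy $\lambda_n$ with $\{\lambda_n,d_X\}=w'_n\cdot 1_X$. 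The paper's trick is to multiply numerator and denominator by $w'_n$ (replacing $W$ by $W\cdot w'_n$ in the denominator, which is still a regular sequence), expand $w'_n\cdot 1_X=\lambda_n d_X+d_X\lambda_n$ in the numerator, and use $d_X^2=W$ to cancel the $W$; this produces $\str_S(\psi\phi\,\lambda_1\cdots\lambda_n)$ over $(w'_1,\ldots,w'_n)$ directly, after which a single application of the transformation rule with $\det(C)$ yields the Jacobian denominator. This is cleaner than the subset/minor expansion and makes the sign bookkeeping (which you rightly flag as delicate) essentially mechanical; your route should also work, but the square-matrix version is what resolves the obstacle you identified.
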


Regarding the notation: we choose homogeneous bases for $X,Y$ so that the matrix-valued $n$-form
\[
\ud_{S/k}(d_X)^{\wedge n} = \sum_{\sigma \in S_n} \textup{sgn}(\sigma) \cdot \partial_{\sigma(1)}(d_X) \cdots \partial_{\sigma(n)}(d_X) \cdot \ud x_1 \wedge \cdots \wedge \ud x_n
\]
makes sense\footnote{The careful reader will note that we are confusing K\"ahler differentials over the polynomial and power series ring, but this is harmless since we work within a generalised fraction killed by all sufficiently high powers of the variables.}, where $S_n$ is the symmetric group. The \emph{supertrace} of a homogeneous endomorphism $\alpha$ of a finite free $\Ztwo$-graded $S$-module is by definition the trace of $(-1)^F \alpha$. We also note that $(-)[2]$ is the identity functor on $\cat{M}$, so $X[d-1] = X[n-2] = X[n]$.

The proof will be preceeded by a series of lemmas, establishing some basic commutative algebra which we will need. The main mismatch between our current setting and that of Section \ref{section:duality} which needs to be accounted for is the following: to apply our earlier results, we need a system of parameters $\bs{t} = (t_1,\ldots,t_{n-1})$ in $R$ acting null-homotopically on the acyclic complex $\perd{X}$ associated to a matrix factorisation $X$, and associated null-homotopies $\lambda_j$.

This data \emph{almost} comes for free: since the singularity of the hypersurface $\{W = 0\}$ at the origin is isolated the partial derivatives $\bs{w} = (\partial_1 W, \ldots, \partial_n W)$ form a regular sequence\footnote{For $k = \mathbb{C}$ see \cite[Lemma 23]{Greuel07}. The generalisation to arbitrary $k$ is routine.}, and these derivatives act null-homotopically on any matrix factorisation and therefore also on the periodification:

\begin{lemma}\label{lemma:periodofdoys} Let $X$ be a matrix factorisation. The periodification of the map $\partial_i(d_X)$ is a homotopy
\[
\perd{\partial_i(d_X)}: \perd{X} \lto \perd{X}
\]
satisfying $\perd{\partial_i(d_X)} \circ d_{\perd{X}} + d_{\perd{X}} \circ \perd{\partial_i(d_X)} = \partial_i W \cdot 1_{\perd{X}}$.
\end{lemma}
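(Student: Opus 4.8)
The plan is to derive the whole statement by differentiating the single identity that defines a matrix factorisation. First I would recall that, by definition, $d_X \circ d_X = W \cdot 1_X$ as an $S$-linear endomorphism of the $\Ztwo$-graded free module $X$; after choosing homogeneous bases the two components $d_X^0 \colon X^0 \lto X^1$ and $d_X^1 \colon X^1 \lto X^0$ become matrices with entries in $S$, and composition of such maps is ordinary matrix multiplication. Writing $\partial_i(d_X)$ for the degree one $S$-linear map obtained by applying the $k$-linear derivation $\partial_i = \partial/\partial x_i$ to each entry of $d_X$, the Leibniz rule applied entrywise gives
\[
\partial_i(d_X)\circ d_X + d_X\circ\partial_i(d_X) \;=\; \partial_i\!\left(d_X\circ d_X\right) \;=\; \partial_i(W\cdot 1_X) \;=\; \partial_i W\cdot 1_X
\]
as an identity in $\End_S(X)$.

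Next I would push this identity through the periodification functor $-\otimes_S R$. This functor sends the $\Ztwo$-graded pair $(X,d_X)$ to the two-periodic $\mathbb{Z}$-graded complex $(\perd{X}, d_{\perd{X}})$, and sends the degree one map $\partial_i(d_X)$ to the map $\perd{\partial_i(d_X)}\colon\perd{X}\lto\perd{X}$ appearing in the statement. Since $-\otimes_S R$ is additive and takes composites to composites — it is just the ring homomorphism $S\lto R$ applied to matrix entries — the displayed identity descends to
\[
\perd{\partial_i(d_X)}\circ d_{\perd{X}} + d_{\perd{X}}\circ\perd{\partial_i(d_X)} \;=\; \partial_i W\cdot 1_{\perd{X}}
\]
in $\End_R(\perd{X})$, which is exactly the assertion of the lemma. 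In particular $\perd{\partial_i(d_X)}$ is a homotopy witnessing that multiplication by $\partial_i W$ is null-homotopic on $\perd{X}$, which is what will let us apply the results of Section \ref{section:duality} with $\bs{t} = (\partial_1 W,\ldots,\partial_{n-1}W)$ later on.

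There is essentially no obstacle here: the only point requiring a moment's care is the bookkeeping of which component of $\partial_i(d_X)$ and of $d_X$ lands in which degree of $\perd{X}$, together with the question of whether Koszul signs intervene when transporting the relation along periodification. This turns out to be harmless, because the differential of $\perd{X}$ carries no signs — the successive composites $d^0_X\otimes 1$ and $d^1_X\otimes 1$ vanish simply because $W$ maps to $0$ in $R$, not because of any sign adjustment — so the periodification functor is sign-free and the identity in $\End_S(X)$ transfers verbatim, with the same pattern of plus signs, to $\End_R(\perd{X})$.
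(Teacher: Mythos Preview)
Your proof is correct and follows exactly the same approach as the paper: differentiate the defining identity $(d_X)^2 = W \cdot 1_X$ via the Leibniz rule to obtain $\partial_i(d_X)\circ d_X + d_X\circ\partial_i(d_X) = \partial_i W\cdot 1_X$, then reduce modulo $W$. The paper's proof is the one-line version of yours; your additional remarks on periodification being sign-free and compatible with composition are accurate but not strictly needed.
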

\begin{proof}
Since $(d_X)^2 = W$, this follows by the Leibniz rule $\partial_i(d_X) \circ d_X + d_X \circ \partial_i(d_X) = \partial_i W \cdot 1_X$.
\end{proof}

If we could find a system of parameters for $R$ as a subset of $\bs{w}$, say $(\partial_1 W, \ldots, \partial_{n-1} W)$, then we could take as our null-homotopies the $\overline{\partial_i(d_X)}$ and Corollary \ref{corollary:ciresformula} would provide a residue formula for the pairing in $\m$. This is not quite the Kapustin-Li pairing, but it is very close. 

But we are getting ahead of ourselves: it is \emph{not} true in general that a subset of $\bs{w}$ gives a system of parameters for $R$. However this can always be arranged by a change of variables, or what amounts to the same thing, replacing $\bs{w}$ by the sequence
\begin{equation}\label{eq:defnwprime}
\bs{w}' = \left(\sum_{j=1}^n c_{1j} \cdot \partial_j W, \ldots, \sum_{j=1}^n c_{nj} \cdot \partial_j W\right)
\end{equation}
for some invertible $n \times n$ matrix $C$ over $k$. We can always choose $C$ such that
\begin{equation}\label{eq:tseq}
\bs{t} := (w'_1, \ldots, w'_{n-1})
\end{equation}
is a system of parameters for $R$. This is the content of the next pair of lemmas.

\begin{lemma}\label{lemma:technical_sop} Let $(A,\mf{m})$ be a Cohen-Macaulay local ring of dimension $d > 0$ which is an algebra over an infinite field $K$, and let $u_1, \ldots, u_m$ be elements of $A$ generating an $\mf{m}$-primary ideal. There are linear combinations $y_i = \sum_{j=1}^m a_{ij} u_j$ with coefficients $a_{ij} \in K$ such that $y_1, \ldots, y_d$ forms a system of parameters for $A$.
\end{lemma}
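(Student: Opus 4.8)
The plan is to construct the $y_i$ one at a time, using prime avoidance in the form that holds over infinite fields. Since $A$ is Cohen-Macaulay of dimension $d$, a system of parameters is the same thing as a sequence of $d$ elements forming a regular sequence, or equivalently a sequence $y_1,\dots,y_d$ such that $\dim A/(y_1,\dots,y_i) = d-i$ for each $i$. So it suffices to produce $y_1,\dots,y_d$ with each $y_{i+1}$ avoiding every minimal prime of $(y_1,\dots,y_i)$.

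First I would set up the inductive step. Suppose $y_1,\dots,y_i$ have been chosen (for $i < d$) as $K$-linear combinations of the $u_j$, with $\dim A/(y_1,\dots,y_i) = d-i > 0$. Let $\mf{p}_1,\dots,\mf{p}_r$ be the minimal primes over $(y_1,\dots,y_i)$. None of them is $\mf{m}$, since the quotient has positive dimension; on the other hand, since $(u_1,\dots,u_m)$ is $\mf{m}$-primary, the $u_j$ cannot all lie in any single $\mf{p}_s$ — if they did, $\mf{p}_s \supseteq (u_1,\dots,u_m)$ would force $\mf{p}_s = \mf{m}$. Now consider the $K$-subspace $V \subseteq A$ spanned by $u_1,\dots,u_m$. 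For each $s$, the intersection $V \cap \mf{p}_s$ is a proper $K$-subspace of $V$. Because $K$ is infinite, a finite union of proper subspaces of $V$ cannot be all of $V$, so there is an element $y_{i+1} \in V$ — i.e. a $K$-linear combination $\sum_j a_{i+1,j} u_j$ — lying in none of the $\mf{p}_s$. Then $y_{i+1}$ is a nonzerodivisor on $A/(y_1,\dots,y_i)$, so $\dim A/(y_1,\dots,y_{i+1}) = d-i-1$.

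Finally I would assemble the induction: starting from the empty sequence (where $\dim A = d > 0$ by hypothesis), repeating the step $d$ times produces $y_1,\dots,y_d$, each a $K$-linear combination of the $u_j$, with $\dim A/(y_1,\dots,y_d) = 0$. A sequence of $d = \dim A$ elements cutting the dimension down to zero is exactly a system of parameters, which completes the proof. The only point requiring any care is the "finite union of proper subspaces" argument, and this is precisely where the hypothesis that $K$ is infinite is used; over a finite field one would instead have to allow the coefficients $a_{ij}$ to range over a field extension, which is why the lemma is stated for infinite $K$ (and in the application $K = k$, which has characteristic zero, hence is infinite).
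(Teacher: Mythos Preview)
Your proposal is correct and follows essentially the same route as the paper: both build the sequence inductively by choosing at each step a $K$-linear combination of the $u_j$ avoiding the finitely many relevant primes, using that over an infinite field a finite-dimensional $K$-vector space is not a finite union of proper subspaces. The only cosmetic differences are that the paper phrases the avoidance in the coefficient space $K^m$ rather than in the span $V\subseteq A$, and speaks of associated primes rather than minimal primes (these coincide here since the successive quotients are Cohen--Macaulay).
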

\begin{proof}
Let $\mf{p}_1, \ldots, \mf{p}_r$ be the associated primes of $A$. We claim that there exists a linear combination $y = b_1 u_1 + \cdots + b_m u_m$ $(b_i \in K)$ which is a regular element of $A$. Suppose to the contrary that every such linear combination is a zero-divisor, and therefore belongs to the union $\bigcup_i \mf{p}_i$. If we set $V_i = \{ (b_1,\ldots,b_m) \in K^m \l \sum_j b_j u_j \in \mf{p}_i \}$ for $1 \le i \le r$ then our assumption implies that $\bigcup_i V_i = K^m$. But every $V_i$ is a proper subspace, because if $V_i = K^m$ then $\{ u_1, \ldots, u_m \} \subseteq \mf{p}_i$ which would imply $\mf{p}_i = \mf{m}$, contradicting our assumption that $d > 0$. We have reached the desired contradiction, because $K$ is infinite and thus $K^m$ is not a finite union of proper subspaces. Applying the claim recursively we produce the desired system of parameters for $A$.
\end{proof}

\begin{lemma}\label{lemma:special_reg_seq_exists} There is an invertible matrix $C$ over $k$ such that $(\ref{eq:tseq})$ is a system of parameters for $R$.
\end{lemma}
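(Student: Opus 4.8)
\emph{Proof proposal.} The plan is to obtain $C$ from Lemma \ref{lemma:technical_sop} applied to $A = R$, together with a small piece of linear algebra that enlarges the coefficient matrix that lemma supplies to an invertible $n\times n$ matrix. We may assume $d = n-1 > 0$, and $k$ is infinite since it has characteristic zero. The ring $R$ is Gorenstein, hence Cohen-Macaulay, of dimension $d$. Moreover the images $\overline{\partial_1 W},\ldots,\overline{\partial_n W}$ in $R$ generate an $\mf{m}$-primary ideal: writing $J = (\partial_1 W,\ldots,\partial_n W)S$ for the Jacobian ideal, the isolated-singularity hypothesis gives $\sqrt{J + (W)} = \mf{m}_S$ by the Jacobian criterion, and reducing modulo $W$ yields $\sqrt{JR} = \mf{m}$ (equivalently: $\bs{w}$ is a system of parameters for $S$, so its image is $\mf{m}$-primary in $R$).

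First I would invoke Lemma \ref{lemma:technical_sop} with $K = k$, the elements $u_j = \overline{\partial_j W}$, and $m = n$; it produces scalars $a_{ij}\in k$ for $1\le i\le d$ and $1\le j\le n$ such that $y_i := \sum_{j} a_{ij}\,\overline{\partial_j W}$ form a system of parameters for $R$. The only delicate point is to check that the $d = n-1$ rows $\bs{a}_i = (a_{i1},\ldots,a_{in})\in k^n$ are linearly independent. If they were not, then some nontrivial $k$-linear relation among the $y_i$ would hold in $R$, so $(y_1,\ldots,y_{n-1})R$ would be generated by at most $n-2$ elements; but this ideal is $\mf{m}$-primary while $\dim R = n-1$, contradicting the generalised principal ideal theorem, according to which an $\mf{m}$-primary ideal in a Noetherian local ring of dimension $d$ needs at least $d$ generators.

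Having shown $\bs{a}_1,\ldots,\bs{a}_{n-1}$ are linearly independent in $k^n$, I would complete them to a basis by adjoining any $\bs{a}_n\in k^n$ outside their span, and take $C = (c_{ij})$ to be the invertible $n\times n$ matrix with rows $\bs{a}_1,\ldots,\bs{a}_n$, so that $c_{ij} = a_{ij}$ for $i\le n-1$. Then the sequence $\bs{w}'$ defined in (\ref{eq:defnwprime}) from this $C$ satisfies $(w'_1,\ldots,w'_{n-1}) = (y_1,\ldots,y_{n-1})$, a system of parameters for $R$, which is exactly the assertion. The only real obstacle is the passage from the $(n-1)\times n$ coefficient matrix returned by Lemma \ref{lemma:technical_sop} to an honest invertible $n\times n$ matrix, i.e.\ the linear independence of its rows; everything else is a direct application of that lemma together with standard dimension theory.
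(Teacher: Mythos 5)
Your proof is correct and takes essentially the same route as the paper: apply Lemma \ref{lemma:technical_sop} to the images of the $\partial_j W$ in $R$, observe the resulting $(n-1)\times n$ coefficient matrix has linearly independent rows (since otherwise the $\mf{m}$-primary ideal they generate would have fewer than $\dim R$ generators, contradicting Krull's height theorem), and complete it to an invertible $C$. The paper compresses the linear-independence step into the parenthetical ``otherwise we would have $\dim(R) < n-1$,'' which is exactly your dimension-theoretic argument.
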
 
\begin{proof}
By hypothesis $\bs{w}$ generates an ideal primary for the maximal ideal in $S$, and therefore also in $R$. Using Lemma \ref{lemma:technical_sop}, we can find $n-1$ vectors $\{ (c_{i1}, \ldots, c_{in}) \in k^n \}_{1 \le i \le n-1}$ such that $\bs{t}$ is a system of parameters for $R$. These vectors must be linearly independent (otherwise we would have $\dim(R) < n - 1$) and we can define the desired matrix $C$ by appending to this list an arbitrary, linearly independent, vector $(c_{n1},\ldots,c_{nn})$ from $k^n$.
\end{proof}

In what follows we assume that $C$ and thus $\bs{w}'$ and $\bs{t}$ have been fixed, such that $\bs{t}$ is a system of parameters for $R$. It follows from Lemma \ref{lemma:periodofdoys} that the homotopies
\[
\lambda_i := \sum_{j=1}^n c_{ij} \cdot \perd{\partial_j(d_X)}
\]
on $\perd{X}$ satisfy $\lambda_i \circ d_{\perd{X}} + d_{\perd{X}} \circ \lambda_i = w'_i \cdot 1_{\perd{X}}$ for $1 \le  i \le n$.

\begin{proof}[Proof of Theorem \ref{theorem:klformula}]
By Corollary \ref{corollary:ciresformula} there is a nondegenerate pairing (\ref{eq:klformulaproof1}) given by
\[
\langle \psi, \phi \rangle = (-1)^{\binom{n-1}{2}} \Res_{S/k}\! \begin{bmatrix} \tr_R\left( \perd{\psi} \circ \perd{\phi} \circ \lambda_1 \cdots \lambda_{n-1} \circ d_{\perd{X}} \right)^0 \\ t_1, \ldots, t_{n-1}, W \end{bmatrix}\,.
\]
Note that the sign $\epsilon = (-1)^{\binom{n-1}{2}}$ above has a contribution from moving $W$ from the left end of the denominator to the right. Because of the extra $\Ztwo$-symmetry of $\perd{X}$, it is convenient to rewrite this in terms of supertraces of $S$-linear endomorphisms of $X$, making use of Lemma \ref{lemma:ptrandinteger}
\[
\langle \psi, \phi \rangle = \frac{1}{2} \epsilon \Res_{S/k}\! \begin{bmatrix} \str_S\left( \psi \circ \phi \circ \lambda_1 \cdots \lambda_{n-1} \circ d_X \right) \\ t_1, \ldots, t_{n-1}, W \end{bmatrix}\,.
\]
Here we abuse notation and write $\lambda_i$ for the map $\sum_j c_{ij} \cdot \partial_j(d_X)$ on $X$. Next we observe that the sequence $(t_1,\ldots,t_{n-1}, W \cdot w'_n) = (w'_1, \ldots, w'_{n-1}, W \cdot w'_n)$ is regular, and by the transformation rule
\begin{align*}
\langle \psi, \phi \rangle &= \frac{1}{2}\epsilon \Res_{S/k}\! \begin{bmatrix} \str_S\left( \psi \circ \phi \circ \lambda_1 \cdots \lambda_{n-1} \circ ( w'_n \cdot 1_X ) \circ d_X \right) \\ w'_1, \ldots, w'_{n-1}, W \cdot w'_n \end{bmatrix}\\
&= \frac{1}{2}\epsilon \Res_{S/k}\! \begin{bmatrix} \str_S\left( \psi \circ \phi \circ \lambda_1 \cdots \lambda_{n-1} \circ ( \lambda_n \circ d_X + d_X \circ \lambda_n ) \circ d_X \right) \\ w'_1, \ldots, w'_{n-1}, W \cdot w'_n \end{bmatrix}\\
&= \frac{1}{2}\epsilon \Res_{S/k}\! \begin{bmatrix} \str_S\left( \psi \circ \phi \circ \lambda_1 \cdots \lambda_{n-1} \circ \lambda_n \circ (d_X)^2 \right) + \delta \\ w'_1, \ldots, w'_{n-1}, W \cdot w'_n \end{bmatrix}\,.
\end{align*}
where $\delta = \str_S( \psi \circ \phi \circ \lambda_1 \cdots \lambda_{n-1} \circ d_X \circ \lambda_n \circ d_X )$. The $(d_X)^2 = W \cdot 1_X$ cancels with the $W$ in the denominator, so that
\begin{equation}\label{eq:almostfinalklres}
\langle \psi, \phi \rangle = \frac{1}{2}\epsilon \Res_{S/k}\! \begin{bmatrix} \str_S\left( \psi \circ \phi \circ \lambda_1 \cdots \lambda_n \right) \\ w'_1, \ldots, w'_n \end{bmatrix} + \frac{1}{2}\epsilon \Res_{S/k}\! \begin{bmatrix} \delta \\ w'_1, \ldots, w'_{n-1}, W \cdot w'_n \end{bmatrix}\,.
\end{equation}
 Working modulo the sequence $w'_1, \ldots, w'_{n-1}, W \cdot w'_n$, so that $d_X$ anticommutes with every $\lambda_j$ except for $\lambda_n$, we have
\begin{align*}
\delta &= \str_S( \psi \circ \phi \circ \lambda_1 \cdots \lambda_{n-1} \circ d_X \circ \lambda_n \circ d_X )\\
&= - \str_S( d_X \circ \psi \circ \phi \circ \lambda_1 \cdots \lambda_{n-1} \circ d_X \circ \lambda_n )\\
&= (-1)^{n+1} \str_S( \psi \circ \phi \circ d_X \circ \lambda_1 \cdots \lambda_{n-1} \circ d_X  \circ \lambda_n)\\
&= \str_S( \psi \circ \phi \circ \lambda_1 \cdots \lambda_{n-1} \circ (d_X)^2 \circ \lambda_n)\\
&= W \cdot \str_S( \psi \circ \phi \circ \lambda_1 \cdots \lambda_n)\,.
\end{align*}
If we insert this into the residue then the $W$'s cancel as before, and from (\ref{eq:almostfinalklres}) we conclude
\begin{align*}
\langle \psi, \phi \rangle &= \epsilon \Res_{S/k}\! \begin{bmatrix} \str_S\left( \psi \circ \phi \circ \lambda_1 \cdots \lambda_n \right) \\ w'_1, \ldots, w'_n \end{bmatrix}\\
&= \frac{1}{n!} \epsilon \sum_{\sigma \in S_n} \textup{sgn}(\sigma) \Res_{S/k}\! \begin{bmatrix} \str_S\left( \psi \circ \phi \circ \lambda_{\sigma(1)} \cdots \lambda_{\sigma(n)} \right) \\ w'_1, \ldots, w'_n \end{bmatrix}\\
&= \frac{1}{n!} \epsilon \Res_{S/k}\! \begin{bmatrix} \det(C) \cdot \str_S\left( \psi \circ \phi \circ \ud_{S/k}(d_X)^{\wedge n} \right) \\ w'_1, \ldots, w'_n \end{bmatrix}\\
&= \frac{1}{n!} \epsilon \Res_{S/k}\! \begin{bmatrix} \str_S\left( \psi \circ \phi \circ \ud_{S/k}(d_X)^{\wedge n} \right) \\ w_1, \ldots, w_n \end{bmatrix}
\end{align*}
In the first step we use the fact that the pairing $\langle -, - \rangle$ is independent of the ordering of the regular system of parameters and homotopies, so that effectively the $\lambda_i$'s anticommute within the supertrace (see Remark \ref{remark:gradedcomm} and also \cite[Appendix A]{dm1102.2957} for a direct proof) and therefore all permutations contribute equally. In the last step we use the transformation rule.
\end{proof}

Consider for any matrix factorisation $X$ the trace map
\begin{equation}\label{eq:tracemapklfinal1}
\langle - \rangle: \m(X, X[n]) \lto k
\end{equation}
defined by $\langle \psi \rangle = \langle \psi, 1 \rangle$, that is
\begin{equation}\label{eq:tracemapklfinal2}
\langle \psi \rangle = \frac{1}{n!}(-1)^{\binom{n-1}{2}} \Res_{S/k}\!\begin{bmatrix} \str_S\!\left(\psi \circ \ud_{S/k}(d_X)^{\wedge n}\right) \\ \partial_1 W, \ldots, \partial_n W \end{bmatrix}\,.
\end{equation}
The nondegenerate pairing $\langle -, - \rangle$ is determined by the trace map, since $\langle \psi, \phi \rangle = \langle \psi \circ \phi \rangle$. If we want to emphasise the underlying matrix factorisation, we write $\langle - \rangle_X$ for $\langle - \rangle$. For convenience, let us state the following immediate consequence of Lemma \ref{lemma:ptrproperties}:

\begin{lemma} For matrix factorisations $X,Y$ we have:
\begin{itemize}
\item[(i)] For morphisms $\psi: Y \lto X[d-1]$ and $\phi: X \lto Y$, $\langle \psi \circ \phi \rangle_X = \langle \phi \circ \psi \rangle_Y$.
\item[(ii)] For a morphism $\psi: X \lto X[d-1]$, $\langle \psi \rangle_X = (-1)^d \cdot \langle \psi \rangle_{X[1]}$.
\end{itemize}
\end{lemma}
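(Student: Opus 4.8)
The plan is to transport the properties of the pretrace map recorded in Lemma~\ref{lemma:ptrproperties} across the equivalence $\perd{(-)}: \m \lto \cat{T} = \K_{\ac}(\free R)$. Recall that the pairing $\langle -, - \rangle$ on $\m$ was \emph{defined} to be the $k$-valued pairing $\langle -, - \rangle = \zeta \circ \plangle -, - \prangle$ on $\cat{T}$ pulled back along $\perd{(-)}$. Since $\perd{(-)}$ is a functor we have $\perd{\psi \circ \phi} = \perd{\psi} \circ \perd{\phi}$ and $\perd{1_X} = 1_{\perd{X}}$, and hence the trace map on $\m$ is computed, for $\psi: X \lto X[d-1]$, by
\[
\langle \psi \rangle_X = \langle \psi, 1_X \rangle = \zeta \plangle \perd{\psi}, 1_{\perd{X}} \prangle_{\perd{X}} = \zeta \plangle \perd{\psi} \prangle_{\perd{X}}.
\]
Everything then follows by applying the $k$-linear map $\zeta$ to the identities of Lemma~\ref{lemma:ptrproperties}.

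For part (i), I would write $\langle \psi \circ \phi \rangle_X = \zeta \plangle \perd{\psi} \circ \perd{\phi} \prangle_{\perd{X}}$, invoke Lemma~\ref{lemma:ptrproperties}(i) to replace the right-hand side by $\zeta \plangle \perd{\phi} \circ \perd{\psi} \prangle_{\perd{Y}}$, and then rewrite this as $\langle \phi \circ \psi \rangle_Y$ using $\perd{\phi} \circ \perd{\psi} = \perd{\phi \circ \psi}$ and the displayed formula above.

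For part (ii), I would use that $\perd{(-)}$ is a triangulated functor and hence commutes with suspension: there is a natural isomorphism $\perd{X[1]} \cong \perd{X}[1]$ in $\cat{T}$, under which the morphism $\perd{\psi}$ for $X[1]$ corresponds to the morphism $\perd{\psi}$ for $\perd{X}[1]$. Applying $\zeta$ to the identity $\plangle \perd{\psi} \prangle_{\perd{X}} = (-1)^d \plangle \perd{\psi} \prangle_{\perd{X}[1]}$ of Lemma~\ref{lemma:ptrproperties}(ii) and transporting the right-hand side through this isomorphism yields $\langle \psi \rangle_X = (-1)^d \langle \psi \rangle_{X[1]}$.

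The one point that deserves care — and hence the main, if minor, obstacle — is the suspension bookkeeping in part (ii). On $\m$ the functor $[2]$ is the identity while on $\cat{T}$ it is not, so one has to check that the isomorphism $\perd{X[1]} \cong \perd{X}[1]$ chosen above is compatible with the sign $(-1)^{d-1} \cdot 1_{[d]}$ that is part of the triangulated structure on $\cat{T}$, and correspondingly that the null-homotopies $\lambda_i = \sum_j c_{ij} \cdot \perd{\partial_j(d_X)}$ transform as required; negating $d_X$ on the shift $X[1]$ negates each $\partial_j(d_X)$, so this is exactly the substitution $\lambda_j \mapsto -\lambda_j$ already used in the proof of Lemma~\ref{lemma:ptrproperties}(ii). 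Once this is pinned down both (i) and (ii) are immediate.
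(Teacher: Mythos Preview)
Your proposal is correct and matches the paper's approach exactly: the paper simply declares the lemma to be an ``immediate consequence of Lemma~\ref{lemma:ptrproperties}'' without further argument, and what you have written is precisely the transport along the equivalence $\perd{(-)}$ that makes this claim precise. Your extra care with the suspension bookkeeping in part (ii) is a welcome elaboration of a point the paper leaves implicit.
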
 

\begin{remark} There is a more general statement which follows from the lemma: if $\psi: Y \lto X$ is a morphism of degree $d-1-a$ and $\phi: X \lto Y$ is a morphism of degree $a$ then
\[
\langle \psi \circ \phi \rangle_X = \langle \phi \circ \psi \rangle_{Y[a]} = (-1)^{da} \langle \phi \circ \psi \rangle_Y = (-1)^{|\psi| |\phi|} \langle \phi \circ \psi \rangle_Y\,.
\]
\end{remark}

\begin{remark}\label{remark:boundary_bulk_hypersurface} A classical invariant associated to the singular hypersurface $\{ W = 0 \}$ is the $k$-algebra $\Omega_W = S/(\bs{w})$, called the \emph{Jacobi algebra} of $W$. It is a classical result of local duality (see for example \cite[p.659]{Griffiths} when $k = \mathbb{C}$) that $\Omega_W$ together with the functional
\begin{gather*}
\gamma: \Omega_W \lto k,\\
\gamma(s) = \Res_{S/k}\! \big[ s \cdot \ud V \;/\; \partial_{1} W, \ldots, \partial_{n} W \big]
\end{gather*}
is a Frobenius algebra, that is, the pairing $(r,s) = \gamma(rs)$ is nondegenerate. The explicit formula suggests that the trace map of (\ref{eq:tracemapklfinal1}) factors into two pieces
\[
\xymatrix{
\m(X, X[n]) \ar[r]^-{\beta} & \Omega_W \ar[r]^-{\gamma} & k
}
\]
where
\begin{equation}\label{eq:endofbeta}
\beta(\psi) = \frac{1}{n!}(-1)^{\binom{n-1}{2}} \str_S( \psi \cdot \ud_{S/k}(d_X)^{\wedge n})\,.
\end{equation}
This is known in the physics literature as the \emph{boundary-bulk} map, as it sends boundary states (endomorphisms of $X$) to closed states (elements of $\Omega_W$). Rather than argue directly that $\beta$ is well-defined, let us proceed as follows: local duality states that there is an isomorphism of $\Omega_W$-modules
\begin{equation}\label{eq:bulk_boundary_locald_2}
\begin{split}
&\Omega_W \lto \Hom_S( \Omega_W, H^n_{\mf{m}}(S) )\\
r \mapsto &\left\{ s \mapsto \big[ rs \cdot \ud V \;/\; \partial_{1} W, \ldots, \partial_{n} W \big] \right\}.
\end{split}
\end{equation}
The morphism spaces in $\m$ are annihilated by $(\bs{w})$ and the $S$-linear map 
\[
\tau \circ \plangle - \prangle: \m(X, X[n]) \lto H^{n-1}_{\mf{m}}(R) \lto H^n_{\mf{m}}(S)
\]
(with $\tau$ as in Proposition \ref{prop:tauinclusion}) must therefore factor through the submodule $\Hom_S(\Omega_W, H^n_{\mf{m}}(S))$ of $H^n_{\mf{m}}(S)$. Composing this factorisation with the isomorphism (\ref{eq:bulk_boundary_locald_2}) we have a canonical morphism of $\Omega_W$-modules $\m(X, X[n]) \lto \Omega_W$ and this is precisely the map $\beta$ described in (\ref{eq:endofbeta}).
\end{remark}

\begin{example} Set $S = k[[x,y]]$ and $W = x^2 y + y^4$. This is an isolated singularity of type $D_5$. Given a power series $g(x,y)$ we write $C_{x^i y^j}(g)$ for the coefficient of $x^i y^j$ in $g$, or in terms of residue symbols
\[
C_{x^iy^j}(g) = \Res\big[ g \cdot \ud x \land \ud y \,/\, x^{i+1} y^{j+1} \big]\,.
\]
Using the transformation rule as explained in Example \ref{example:residuespowerseries} we see that
\begin{equation}\label{eq:how_to_compute_res_2}
\begin{split}
\Res\Big[\begin{matrix} f \cdot \ud V \;/\; \partial_x W, \partial_y W \end{matrix}\Big] &= \Res\Big[\begin{matrix} (-\frac{1}{2}y^3 + \frac{1}{8}x^2) f \cdot \ud V \;/\; x^3, y^4 \end{matrix}\Big] = -\tfrac{1}{2}C_{x^2}(f) + \tfrac{1}{8}C_{y^3}(f).
\end{split}
\end{equation}
Let $\psi$ be an endomorphism of the matrix factorisation $X$ with differential
\[
d_X = \begin{pmatrix} 0 & d^1 \\ d^0 & 0 \end{pmatrix}, \quad d^0 = \begin{pmatrix} xy & y^2 \\ y^2 & -x \end{pmatrix}, \quad d^1 = \begin{pmatrix} x & y^2 \\ y^2 & -xy \end{pmatrix}\,.
\]
Let us calculate $\langle \psi \rangle$ using the formula (\ref{eq:tracemapklfinal2}). One checks that
\begin{align*}
\str\Big( \psi \left( \partial_x(d_X) \cdot \partial_y(d_X) - \partial_y(d_X) \cdot \partial_x (d_X)\right)\Big) &= x \cdot \psi^0_{11} - 4y^2 \cdot \psi^0_{12} + 4y \cdot \psi^0_{21} - x \cdot \psi^0_{22}\\
&\qquad + x \cdot \psi^1_{11} + 4y \cdot \psi^1_{12} - 4y^2 \cdot \psi^1_{21} - x \cdot \psi^1_{22}\,.
\end{align*}
Hence
\begin{equation}\label{eq:example_trace2}
\begin{split}
\langle \psi \rangle &= \tfrac{1}{4}\Big\{ - C_x(\psi^0_{11}) - C_y(\psi^0_{12}) + C_{y^2}(\psi^0_{21}) + C_x(\psi^0_{22})\\
&\qquad - C_x(\psi^1_{11}) + C_{y^2}(\psi^1_{12}) - C_y(\psi^1_{21}) + C_x(\psi^1_{22}) \Big\}.
\end{split}
\end{equation}
For example, $(\psi^0,\psi^1) = \left( \left(\begin{smallmatrix} 0 & 1 \\ -y & 0 \end{smallmatrix}\right), \left(\begin{smallmatrix} 0 & - y \\ 1 & 0 \end{smallmatrix}\right) \right)$ is an endomorphism of $X$ and we compute that $\langle \psi \rangle = 0$ and $\langle y \cdot \psi \rangle = -1$. The matrix factorisation $X$ is taken from \cite[Ch.9]{Yoshino90}. For further examples of the Kapustin-Li formula in the physics literature, see \cite{Kapustin03, KapustinLi, Herbst05}.
\end{example}

\appendix

\section{Residual complexes and generalised fractions}\label{section:resid_genfrac}

Let $(S,\mf{n},k)$ be a Cohen-Macaulay local ring of dimension $n \ge 1$, and let $(R,\mf{m},k)$ denote the local ring $R := S/W$ for some regular element $W \in S$. Our aim in this appendix is to compare generalised fractions over $S$ and its quotient $R$, using a theorem of Sastry and Yekutieli from \cite{Yek95}. Note that $R$ is Cohen-Macaulay, so both $R$ and $S$ are equidimensional and catenary, and the remarks of \cite[\S 2.1 - \S 2.2]{Yek95} apply. We allow $n = 1$, so that $R$ may be Artinian.

First we recall some basic material from \cite[\S 3.3]{Herzog}. A finitely generated $S$-module $C$ is called a \emph{canonical module} of $S$ if $\dim_k \Ext^i_S(k, C) = \delta_{in}$. A canonical module of $S$ exists if and only if $S$ is a homomorphic image of a Gorenstein local ring and, if a canonical module of $S$ exists, it is unique up to (non-canonical) isomorphism. If $S$ is Gorenstein then $S$ itself is a canonical module, so any canonical module is free of rank one.

Suppose that $S$ has canonical module $\omega_S$ and let $\eta: \omega_S \lto I_S$ be a minimal injective resolution. The complex $I_S$ is then a \emph{residual complex} for $S$, that is, $I_S$ is a bounded below complex of injective $S$-modules with finitely generated cohomology, such that there is an isomorphism
\[
\bigoplus_{i \in \mathbb{Z}} I_S^i \cong \bigoplus_{\mf{p} \in \Spec(S)} E_S(S/\mf{p}).
\]
Indeed, given $\mf{p} \in \Spec(S)$ with $\hht(\mf{p}) = c$ we define $I_S(\mf{p}) := H^c_{\mf{p}}(I_S)$ to be the submodule of elements in $I_S^c$ annihilated by some power of $\mf{p}$. Then $I_S(\mf{p}) \cong E_S(S/\mf{p})$ and for $0 \le c \le n$ the inclusions define a coproduct $I_S^c = \bigoplus_{\hht(\mf{p}) = c} I_S(\mf{p})$ and we write $\mu_{\mf{p}}: I_S^c \lto I_S(\mf{p})$ for the corresponding projection morphisms. We introduce a family of morphisms $\partial_{[t]}$, following Sastry and Yekutieli \cite[\S 2.1]{Yek95}.

\begin{definition} Given a \emph{saturated chain} $(\mf{p}, \mf{q})$ in $\Spec(S)$ (i.e. $\hht(\mf{q}/\mf{p}) = 1$), $\partial_{(\mf{p}, \mf{q})}$ is the morphism
\[
\partial_{(\mf{p}, \mf{q})}: I_S(\mf{p}) \xlto{\inc} I_S^c \xlto{\partial^c} I_S^{c+1} \xlto{\mu_{\mf{q}}} I_S(\mf{q})
\]
where $c = \hht(\mf{p})$. Given $0 \le c < n$ and $t \in S$ we define a morphism $\partial_{[t]}: I_S^c \lto I_S^{c+1}$ by
\[
\partial_{[t]} = \sum_{\substack{(\mf{p}, \mf{q}) \text{ saturated}\\ \hht(\mf{p}) = c, \; t \in \mf{q} \setminus \mf{p}}} \partial_{(\mf{p}, \mf{q})}
\]
\end{definition}

 If $\mf{p}$ is a prime ideal of $S$ then $I_S(\mf{p}) \cong E_S(S/\mf{p})$ so $(0:_{I_S(\mf{p})} W)$ is nonzero if and only if $\mf{p}$ contains $W$, in which case $(0:_{I_S(\mf{p})} W) \cong E_R(R/\mf{p})$ as $R$-modules. In what follows $\hht(\mf{p})$ always denotes the height of the prime ideal $\mf{p}$ in $S$. From the canonical module and its resolution over $S$, we obtain the same data over $R$:

\begin{definition} The module $\omega_R := \omega_S/W\omega_S$ is a canonical module for $R$ with minimal injective resolution $I_R := \Hom_S(R, I_S)[1]$ (see Lemma \ref{lemma:explicit_resmap_kappa} below). Given $\mf{p} \in \Spec(S)$ containing $W$, we set $I_R(\mf{p}) = (0 :_{I_S(\mf{p})} W)$. Then $I_R$ is concentrated in degrees $[0,n-1]$ and given in degree $c$ by
\[
I_R^c = \Hom_S(R, I_S^{c+1}) \cong \bigoplus_{\substack{\hht(\mf{p})=c+1,\\ W \in \mf{p}}} I_R(\mf{p}).
\]
\end{definition}

\begin{lemma}\label{lemma:explicit_resmap_kappa} There is a quasi-isomorphism $\kappa: \omega_R \lto I_R$ defined for $\gamma \in \omega_S$ by
\begin{equation}\label{eq:explicit_resmap_kappa_0}
\kappa(\overline{\gamma}) = \partial_{[W]}\left(\frac{1}{W} \cdot \eta(\gamma) \right) = \sum_{\substack{(\mf{p}, \mf{q}) \text{ saturated}\\ \hht(\mf{p}) = 0, \; W \in \mf{q} \setminus \mf{p}}} \partial_{(\mf{p}, \mf{q})}\left( \frac{1}{W} \cdot \eta(\gamma) \right),
\end{equation}
and this is a minimal injective resolution of $\omega_R$ over $R$.
\end{lemma}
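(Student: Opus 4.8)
The plan is to derive everything from the short exact sequence of complexes of $S$-modules
\[
0 \lto \Hom_S(R, I_S) \lto I_S \xlto{W} I_S \lto 0,
\]
whose left-hand term is the $W$-torsion subcomplex $(0 :_{I_S} W) = \Hom_S(S/W, I_S)$, and where surjectivity of multiplication by $W$ on each injective module $I_S^j$ follows by applying $\Hom_S(-, I_S^j)$ to $0 \to S \xlto{W} S$. Since $W$ is a nonzerodivisor it avoids every associated (equivalently, minimal) prime of $S$, hence is also $\omega_S$-regular; combined with $H^{\bullet}(I_S) = \omega_S$ concentrated in degree $0$, the long exact cohomology sequence then gives $H^0(\Hom_S(R, I_S)) = \Ker(\omega_S \xlto{W} \omega_S) = 0$, $H^1(\Hom_S(R, I_S)) = \Coker(\omega_S \xlto{W} \omega_S) = \omega_R$, and $H^j(\Hom_S(R, I_S)) = 0$ for $j \geq 2$. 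After the shift, $I_R = \Hom_S(R, I_S)[1]$ is a complex of $R$-modules with cohomology $\omega_R$ in degree $0$ and vanishing cohomology in all other degrees.

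I would then record the structural facts. Each $I_R^c = \Hom_S(R, I_S^{c+1})$ is an injective $R$-module, and $\Hom_S(R, E_S(S/\mf p)) = (0 :_{E_S(S/\mf p)} W)$ equals $E_R(R/(\mf p/W))$ when $W \in \mf p$ and is zero otherwise; moreover $\hht_R(\mf p/W) = \hht_S \mf p - 1$ for $W \in \mf p$, because $S_{\mf p}$ is Cohen--Macaulay and $W$ is $S_{\mf p}$-regular. This yields the decomposition $I_R^c \cong \bigoplus_{\hht_S \mf p = c+1,\, W \in \mf p} I_R(\mf p)$ of the preceding definition, shows $I_R$ is concentrated in degrees $[0, n-1]$ (in particular $I_R^{-1} = \Hom_S(R, I_S^0) = 0$, as no minimal prime of $S$ contains $W$), and shows that each indecomposable injective $E_R(R/(\mf p/W))$ appears in a single degree of $I_R$, namely $\hht_R(\mf p/W)$. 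Because $I_R^{-1} = 0$ we have $H^0(I_R) = Z^0(I_R) = \Ker(I_R^0 \to I_R^1)$, so $\omega_R \cong H^0(I_R)$ exhibits $I_R$ as an injective resolution of $\omega_R$; it is minimal because a bounded-below contractible complex of injectives is a direct sum of disk complexes $(E \xlto{1} E)$ (see \cite[Appendix B]{Krause05}), and such a summand would make some indecomposable injective occur in two consecutive degrees of $I_R$, which it does not. Finally, Rees's change-of-rings isomorphism for the nonzerodivisor $W$ gives $\Ext^i_R(k, \omega_R) \cong \Ext^{i+1}_S(k, \omega_S)$, hence $\dim_k \Ext^i_R(k, \omega_R) = \delta_{i, n-1}$, so $\omega_R$ really is a canonical module for $R$.

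It remains to identify the map $\kappa$ of the statement with the resulting resolution map $\omega_R \to Z^0(I_R) \hookrightarrow I_R^0$, i.e.\ with the composite of the connecting isomorphism $\omega_R = \Coker(\omega_S \xlto{W} \omega_S) \xrightarrow{\sim} H^1(\Hom_S(R, I_S)) = H^0(I_R)$ attached to the displayed sequence. By the standard description of a connecting homomorphism, this composite sends $\overline{\gamma}$ to $\partial^0_{I_S}(\xi)$, where $\xi \in I_S^0$ is the unique element with $W\xi = \eta(\gamma)$; such $\xi$ exists because $W$ acts invertibly on $I_S^0 = \bigoplus_{\hht \mf p = 0} E_S(S/\mf p)$, and the recipe descends to $\omega_R$ since $\gamma \in W \omega_S$ forces $\xi \in \eta(\omega_S) = \Ker \partial^0_{I_S}$. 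The only thing left is the combinatorial identity $\partial_{[W]}(\xi) = \partial^0_{I_S}(\xi)$ inside $I_R^0 = (0 :_{I_S^1} W)$: the element $\partial^0_{I_S}(\xi)$ is $W$-torsion, since $W \partial^0_{I_S}(\xi) = \partial^0_{I_S}\eta(\gamma) = 0$, hence has zero component in every summand $I_S(\mf q)$ with $W \notin \mf q$, whereas $\partial_{[W]}$ is by definition the part of $\partial^0_{I_S}$ landing in the summands $I_S(\mf q)$ with $W \in \mf q$ — and the side condition $W \notin \mf p$ in the definition of $\partial_{[W]}$ is automatic in cohomological degree $0$. Reconciling the Sastry--Yekutieli combinatorial differential $\partial_{[W]}$ with the ordinary differential of $I_S$ is the one genuinely fiddly point; everything else is the standard homological algebra of injective resolutions together with the residual-complex formalism recalled before the statement.
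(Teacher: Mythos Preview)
Your proof is correct and follows essentially the same route as the paper's: both compute $\kappa$ by lifting $\eta(\gamma)$ along multiplication by $W$ on $I_S^0$ and applying $\partial^0_{I_S}$, then identifying the result with $\partial_{[W]}(1/W \cdot \eta(\gamma))$ by observing that the components in summands $I_S(\mf{q})$ with $W \notin \mf{q}$ vanish. The only cosmetic difference is packaging: the paper phrases this as a zigzag through $\Hom_S(P_R, I_S)$ for the two-term free resolution $P_R$, whereas you use the connecting homomorphism of the short exact sequence $0 \to \Hom_S(R, I_S) \to I_S \xrightarrow{W} I_S \to 0$; likewise, your minimality argument (a contractible summand would force some indecomposable to appear in consecutive degrees) is the contrapositive of the paper's (any minimal subcomplex is already residual, hence exhausts $I_R$).
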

\begin{proof}
Since $W$ is regular it does not belong to any prime ideal $\mf{p}$ in $S$ of height zero, so $\frac{1}{W} \cdot \eta(\gamma)$ makes sense as an element of $I_S^0$. Let $\{ \mf{p}_1, \ldots, \mf{p}_r \}$ be the associated primes of $W$ in $S$, which agree with the minimal primes since $S$ is Cohen-Macaulay. As an $S$-module $R$ has free resolution $P_R$
\[
0 \lto S \xlto{W} S \lto R \lto 0,
\]
and there is a pair of quasi-isomorphisms
\begin{equation}\label{eq:explicit_resmap_kappa_1}
\xymatrix@C+2pc{
\Hom_S(P_R, \omega_S) \ar[r]^-{\Hom(1,\eta)} & \Hom_S(P_R, I_S) & \Hom_S(R, I_S). \ar[l]
}
\end{equation}
The only nonzero cohomology of $\Hom_S(P_R, \omega_S)$ is $H^1\Hom_S(P_R, \omega_S) = \omega_R$, so $\Hom_S(R, I_S)[1]$ is an injective resolution of $\omega_R$ over $R$, with resolution map $\kappa$ obtained by applying $H^1$ to (\ref{eq:explicit_resmap_kappa_1}):
\[
\kappa: \omega_R \cong H^1 \Hom_S(P_R, \omega_S) \xlto{\cong} H^1 \Hom_S(P_R, I_S) \xlto{\cong} H^1\Hom_S(R, I_S) = H^0 I_R.
\]
It remains to calculate $\kappa$. 

Let $\gamma \in \omega_S$ be given and consider the coboundary in $\Hom_S(P_R, I_S)^1$:
\[
\partial^0_{ \Hom_S(P_R, I_S) }( 1/W \cdot \eta(\gamma)) = \begin{pmatrix} -W \\ \partial^0_{I_S} \end{pmatrix}( 1/W \cdot \eta(\gamma) ) = \begin{pmatrix} -\eta(\gamma)\\ \partial^0_{I_S}(1/W \cdot \eta(\gamma)) \end{pmatrix}.
\]
For a prime ideal $\mf{p}$ of height zero in $S$, let $\eta_{\mf{p}}$ denote the composite of $\eta: \omega_S \lto I_S^0$ with the projection $\mu_{\mf{p}}: I_S^0 \lto I_S(\mf{p})$. Since $\eta$ is a morphism of complexes $0 = \mu_{\mf{q}} \partial^0_{I_S} \eta = \sum_{\hht(\mf{p}) = 0} \partial_{(\mf{p},\mf{q})} \eta_{\mf{p}}$ whenever $\hht(\mf{q}) = 1$. Let $\mf{q}$ be a prime ideal of height one not equal to some $\mf{p}_i$, and thus not containing $W$. Then $W$ acts as a unit on $I_S(\mf{q})$ and
\[
\mu_{\mf{q}} \partial^0_{I_S}(1/W \cdot \eta(\gamma)) = \sum_{\hht(\mf{p}) = 0} \partial_{(\mf{p}, \mf{q})} \mu_{\mf{p}} ( 1/W \cdot \eta(\gamma) ) = 1/W \cdot \sum_{\hht(\mf{p}) = 0} \partial_{(\mf{p}, \mf{q})} \eta_{\mf{p}}(\gamma) = 0.
\]
It follows that
\[
\partial^0_{I_S}(1/W \cdot \eta(\gamma)) = \sum_{\hht(\mf{p}) = 0} \sum_{i=1}^r \partial_{(\mf{p},\mf{p}_i)}( 1/W \cdot \eta(\gamma) ) = \partial_{[W]}(1/W \cdot \eta(\gamma)).
\]
Observe that this element is killed by $W$, and therefore belongs to $I_R^0 = \Hom_S(R,I_S)^1$. Hence
\[
\begin{pmatrix} -W \\ \partial^0_{I_S} \end{pmatrix}( 1/W \cdot \eta(\gamma) ) =  \begin{pmatrix} 0 \\ \partial_{[W]}(1/W \cdot \eta(\gamma)) \end{pmatrix} - \begin{pmatrix} \eta(\gamma) \\ 0 \end{pmatrix},
\]
which shows that as we pass from left to right in (\ref{eq:explicit_resmap_kappa_1}) with the cohomology class of $\gamma$, we arrive at $\partial_{[W]}(1/W \cdot \eta(\gamma))$ on the right hand side. We conclude that $\kappa$ is an injective resolution of $\omega_R$, and it only remains to check that $I_R$ is minimal. There exists a minimal subcomplex $J$ of $I_R$, which must be a residual complex since it is a minimal resolution of $\omega_R$. We already know that $I_R$ is a residual complex, so we conclude that $J = I_R$ and $I_R$ is minimal.
\end{proof}

\begin{remark} Since $I_R$ is a residual complex, as above we define morphisms $\partial_{(\mf{p},\mf{q}),R}: I_R(\mf{p}) \lto I_R(\mf{q})$ for any saturated chain $(\mf{p}, \mf{q})$ in $\Spec(R)$ with $\hht(\mf{p}) = c + 1$ (in $S$) and thus for any $t \in R$ a morphism $\partial_{[t],R}: I_R^c \lto I_R^{c+1}$. We also set $H^n_{\mf{n}}(\omega_S) := (I_S)^n$ and $H^{n-1}_{\mf{m}}(\omega_R) := (I_R)^{n-1} = (0 :_{H^n_{\mf{n}}(\omega_S)} W)$.
\end{remark}

This model for local cohomology differs from the one introduced in Section \ref{section:cinjres} and we want to explain how they are related. 

\begin{remark} Let $(B,\mf{m},k)$ be a Cohen-Macaulay local ring of dimension $d \ge 0$ with a canonical module $\omega_B$ and let $\kappa: \omega_B \lto I_B$ be a minimal injective resolution, so $(I_B)^d$ is isomorphic to the injective envelope $E_B(k)$. Let $\bs{t}$ be a system of parameters for $B$. We define
\[
H^d_{\mf{m}}(\omega_B,\bs{t}) := H^d( \skos_{\infty}(\bs{t}) \otimes \omega_B)\,.
\]
Obviously this depends on the system of parameters $\bs{t}$. Let us also set
\[
H^d_{\mf{m}}(\omega_B) := (I_B)^d = \varinjlim_j \Ext^d_B(B/\mf{m}^j,\omega_B).
\]
The injective resolution $I_B$ determines an isomorphism $H^d_{\mf{m}}(\omega_B, \bs{t}) \cong H^d_{\mf{m}}(\omega_B)$, as follows. Tensoring the augmentation $\varepsilon: \skos_\infty(\bs{t}) \lto B$ with $H^d_{\mf{m}}(\omega_B)$ gives an isomorphism
\begin{equation}\label{eq:part_of_local00}
\varepsilon \otimes 1: \skos_{\infty}(\bs{t}) \otimes H^d_{\mf{m}}(\omega_B) \xlto{\cong} H^d_{\mf{m}}(\omega_B)
\end{equation}
since $H^d_{\mf{m}}(\omega_B) \otimes B[t_i^{-1}] = 0$. Let $v: H^d_{\mf{m}}(\omega_B) \lto I_B[d]$ denote the morphism of complexes given by the identity in degree zero. There is a degree-wise split exact sequence
\[
0 \lto H^d_{\mf{m}}(\omega_B) \xlto{v} I_B[d] \lto \Coker(v) \lto 0,
\]
and since $\omega_B$ is a canonical module, $\Coker(v)$ involves only indecomposable injectives $E_B(B/\mf{p})$ for non-maximal primes $\mf{p}$. It follows that $\skos_{\infty}(\bs{t}) \otimes \Coker(v)$ is contractible, and we deduce a homotopy equivalence
\begin{equation}\label{eq:part_of_local}
\xymatrix@C+0.5pc{
H^d_{\mf{m}}(\omega_B) \ar[r]^(0.38){(\ref{eq:part_of_local00})}_(0.38){\cong} & \skos_{\infty}(\bs{t}) \otimes H^d_{\mf{m}}(\omega_B) \ar[r]^(0.5){1 \otimes v} & \skos_{\infty}(\bs{t}) \otimes I_B[d] \ar[r]_{\cong} & ( K_{\infty}(\bs{t}) \otimes I_B)[d]\,.
}
\end{equation}
Hence the resolution morphism $\kappa: \omega_B \lto I_B$ induces the desired isomorphism
\[
\xymatrix@C+2pc{
H^d_{\mf{m}}(\omega_B,\bs{t}) = H^d( \skos_{\infty}(\bs{t}) \otimes \omega_B) \ar[r]^(0.58){H^d(1 \otimes \kappa)}_(0.58){\cong} & H^d( \skos_{\infty}(\bs{t}) \otimes I_B ) \ar[r]^(0.6){(\ref{eq:part_of_local})}_(0.6){\cong} & H^d_{\mf{m}}(\omega_B)\,.
}
\]
We define generalised fractions in $H^d_{\mf{m}}(\omega_B)$ by transferring the generalised fractions in $H^d_{\mf{m}}(\omega_B, \bs{t})$ defined in Section \ref{section:cinjres} along this isomorphism.
\end{remark}

Let $t_2,\ldots,t_n$ denote a system of parameters for $R$ so that $W, t_2, \ldots, t_n$ is a system of parameters for $S$. As explained above, we introduce generalised fractions in $H^{n-1}_{\mf{m}}(\omega_R)$ and $H^n_{\mf{n}}(\omega_S)$, and a theorem of Sastry-Yekutieli relates these generalised fractions to the morphisms $\partial_{[t]}$. 

\begin{theorem}\label{theorem:sastry_yek}(\cite[(2.2.2)]{Yek95}) If $s_1,\ldots,s_n$ is a system of parameters for $S$ and $\gamma \in \omega_S$ then
\[
\big[ \gamma \,/\, s_1,\ldots,s_n \big]_S = (-1)^{\binom{n}{2}} \partial_{[s_n]} \circ \cdots \circ \partial_{[s_1]} \big( \eta(\gamma) / {s_1 \cdots s_n} \big).
\]
\end{theorem}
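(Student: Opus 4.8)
Since the statement is \cite[(2.2.2)]{Yek95} one may simply cite it; what follows indicates how one would reprove it in the language set up above. The plan is to unwind the left-hand side through the construction in the Remark preceding the statement, where $[\gamma\,/\,s_1,\ldots,s_n]_S$ is defined, up to the sign $(-1)^n$, as the image of the cohomology class of the cocycle $\frac{\gamma\,\theta_1\cdots\theta_n}{s_1\cdots s_n}\in(\skos_\infty(\bs s)\otimes\omega_S)^n$ under the composite
\[
H^n\bigl(\skos_\infty(\bs s)\otimes\omega_S\bigr)\xlto{H^n(1\otimes\eta)}H^n\bigl(\skos_\infty(\bs s)\otimes I_S\bigr)\xlto{\ \cong\ }H^n_{\mf n}(\omega_S)=(I_S)^n,
\]
the second isomorphism being induced by a homotopy inverse of the homotopy equivalence (\ref{eq:part_of_local}). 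So the problem is entirely to evaluate this composite on the given cocycle and to recognise the answer as $(-1)^{\binom{n}{2}}$ times the iterated boundary map.

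First I would produce an explicit chain map $\Phi\colon\skos_\infty(\bs s)\otimes I_S\to I_S$ realising the second isomorphism, using the factorisation $\skos_\infty(\bs s)=\skos_\infty(s_1)\otimes\cdots\otimes\skos_\infty(s_n)$ to build $\Phi$ one tensor factor at a time. The key computation is already contained in the proof of Lemma \ref{lemma:explicit_resmap_kappa}: for a single regular element $t$, applying the differential $\partial_{I_S}$ to a ``fraction'' $\xi/t$ (with $\xi$ a genuine element of $I_S$) yields exactly $\partial_{[t]}(\xi/t)$, because the components of $\partial_{I_S}$ at saturated chains $(\mf p,\mf q)$ with $t\notin\mf q$ cancel against the corresponding components of $\partial_{I_S}\xi$, which vanishes as $\xi$ lifts a cocycle. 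Since a prime of height $i$ containing $s_1,\ldots,s_{i-1}$ cannot contain $s_i$ (the $s_j$ form a regular sequence), iterating this observation over $s_1,\ldots,s_n$ and tracking the contraction in (\ref{eq:part_of_local}) shows that the $\theta_1\cdots\theta_n$-component of $\Phi$ is $\pm\,\partial_{[s_n]}\circ\cdots\circ\partial_{[s_1]}$, while the remaining components contribute either zero or coboundaries in top cohomology. Composing with $1\otimes\eta$, evaluating on $\frac{\gamma\theta_1\cdots\theta_n}{s_1\cdots s_n}$, and using the $S$-linearity of each $\partial_{[s_i]}$ (together with the invertibility of $s_{i+1},\ldots,s_n$ on the relevant summands of $I_S$) to move the scalars all the way inside, one obtains the stated formula up to sign.

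I expect the sign bookkeeping to be the main obstacle. There are three contributions to collate: the $(-1)^n$ in the definition of the generalised fraction, the Koszul signs incurred whenever $\partial_{I_S}$ must move a symbol $\theta_i$ past an element of $I_S$, and the shift signs hidden in identifications of the form (\ref{eq:commutesignd}); these must be shown to collapse to exactly $(-1)^{\binom{n}{2}}$. A secondary technical point is that the retraction of the degreewise-split monomorphism $1\otimes v$ appearing in (\ref{eq:part_of_local}) is not itself a chain map, and must be corrected using the contraction of the contractible complex $\skos_\infty(\bs s)\otimes\Coker(v)$; one has to check that this correction is compatible with the factor-by-factor construction of $\Phi$, so that $\Phi$ genuinely induces the isomorphism of the Remark and not merely a map homotopic to it. The cleanest route is to fix all conventions, do the cases $n=1$ and $n=2$ by hand to pin down the pattern, and then induct on $n$ using the tensor factorisation of $\skos_\infty(\bs s)$; alternatively, one checks that both sides obey the transformation rule of Proposition \ref{prop:transition_det} in $\bs s$ and then verifies the identity for a single convenient system of parameters.
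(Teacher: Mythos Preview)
The paper gives no proof of this statement at all: it is stated with the citation \cite[(2.2.2)]{Yek95} and used as a black box. Your opening sentence already matches this, and the remainder of your proposal is a reasonable sketch of how one might reprove the Sastry--Yekutieli result in the present language, going well beyond what the paper itself does.
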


Using this theorem one can relate generalised fractions over $R$ and $S$. 

\begin{lemma}\label{lemma:genfracquotientsrelate} For $\gamma \in \omega_S$, $\big[ \overline{\gamma} \,/\, t_2,\ldots,t_n \big]_R = \big[ \gamma \,/\, W, t_2,\ldots,t_n \big]_S$ as elements of $H^{n-1}_{\mf{m}}(\omega_R) \subseteq H^n_{\mf{n}}(\omega_S)$.
\end{lemma}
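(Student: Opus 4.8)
The plan is to evaluate both sides of the asserted identity by the Sastry-Yekutieli theorem (Theorem~\ref{theorem:sastry_yek}), once over $S$ and once over $R$, and then to match the two resulting composites of the maps $\partial_{[t]}$.

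First I would apply Theorem~\ref{theorem:sastry_yek} to the system of parameters $W, t_2, \ldots, t_n$ of $S$. Since $W$ and each $t_i$ lie outside every associated prime of $S$, the element $\eta(\gamma)/(W t_2 \cdots t_n)$ is a genuine element of $I_S^0$, and the theorem yields
\[
\big[ \gamma \,/\, W, t_2, \ldots, t_n \big]_S = (-1)^{\binom{n}{2}}\, \partial_{[t_n]} \circ \cdots \circ \partial_{[t_2]} \circ \partial_{[W]}\!\big( \eta(\gamma)/(W t_2 \cdots t_n) \big).
\]
On the other side, Lemma~\ref{lemma:explicit_resmap_kappa} shows that $\kappa : \omega_R \lto I_R$ is a minimal injective resolution exhibiting $I_R$ as a residual complex of the Cohen-Macaulay ring $R$, so the same theorem applies over $R$ to the system of parameters $t_2,\ldots,t_n$ and gives
\[
\big[ \overline{\gamma} \,/\, t_2, \ldots, t_n \big]_R = (-1)^{\binom{n-1}{2}}\, \partial_{[t_n],R} \circ \cdots \circ \partial_{[t_2],R}\!\big( \kappa(\overline{\gamma})/(t_2 \cdots t_n) \big).
\]
Each $t_j$ acts invertibly on the modules $I_S(\mf{p}_i)$ supporting $\kappa(\overline{\gamma})$, where $\mf{p}_1,\ldots,\mf{p}_r$ are the height-one primes associated to $W$ (otherwise $(W,t_j)$ would be an ideal of height two inside the height-one prime $\mf{p}_i$), so dividing $\kappa(\overline{\gamma})$ by $t_2\cdots t_n$ is meaningful; substituting $\kappa(\overline{\gamma}) = \partial_{[W]}(\tfrac{1}{W}\eta(\gamma))$ from Lemma~\ref{lemma:explicit_resmap_kappa} together with the $S$-linearity of $\partial_{[W]}$ then rewrites the argument of the $R$-composite as $\partial_{[W]}\!\big(\eta(\gamma)/(W t_2 \cdots t_n)\big)$, exactly the element appearing on the $S$-side.

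It remains to compare $\partial_{[t],R}$ with $\partial_{[t]}$. An element supported on primes containing $W$ is carried by $\partial_{[t]}$ to another such element, since any saturated chain issuing from a prime that contains $W$ stays inside $V(W)$; hence on $W$-torsion the sum defining $\partial_{[t]}$ collapses onto exactly the chains appearing in $\partial_{[t],R}$, while the degree shift in $I_R = \Hom_S(R, I_S)[1]$ contributes one sign, so that $\partial_{[t],R}$ equals $-\partial_{[t]}$ restricted to $W$-torsion submodules. Composing the $n-1$ maps $\partial_{[t_2],R}, \ldots, \partial_{[t_n],R}$ thus produces a factor $(-1)^{n-1}$ relative to $\partial_{[t_n]} \circ \cdots \circ \partial_{[t_2]}$, and because $\binom{n-1}{2} + (n-1) = \binom{n}{2}$ the two expressions for the generalised fraction coincide, both being elements of $(0 :_{H^n_{\mf{n}}(\omega_S)} W) = H^{n-1}_{\mf{m}}(\omega_R)$. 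I expect the only delicate point to be this sign bookkeeping --- in particular, pinning the factor $(-1)^{n-1}$ on the shift built into the residual complex $I_R$; the remaining steps are a direct substitution and the elementary height argument above.
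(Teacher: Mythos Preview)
Your proposal is correct and follows essentially the same route as the paper: apply the Sastry--Yekutieli theorem over both $S$ and $R$, use Lemma~\ref{lemma:explicit_resmap_kappa} to rewrite $\kappa(\overline{\gamma})/(t_2\cdots t_n)$ as $\partial_{[W]}\big(\eta(\gamma)/(W t_2\cdots t_n)\big)$, and track the sign $(-1)^{n-1}$ coming from the shift in $I_R = \Hom_S(R,I_S)[1]$ when comparing $\partial_{[t],R}$ with $\partial_{[t]}$. The paper records this last comparison as the commutativity of the square with $-\partial_{[t],R}$ on top and $\partial_{[t]}$ on the bottom, and then performs the same sign cancellation $\binom{n-1}{2}+(n-1)=\binom{n}{2}$ you describe.
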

\begin{proof}
By definition for any $0 \le c \le n - 2$ and $t \in S$ the diagram
\[
\xymatrix@C+2pc@R-0.5pc{
I_R^c \ar[d]_j \ar[r]^{-\partial_{[t],R}} & I_R^{c+1} \ar[d]^{j}\\
I_S^{c+1} \ar[r]_{\partial_{[t]}} & I_S^{c+2}
}
\]
commutes, where $j$ denotes inclusions. In particular by Theorem \ref{theorem:sastry_yek} we have
\begin{align*}
j\big[ \overline{\gamma} \,/\, t_2,\ldots,t_n \big]_R &= (-1)^{\binom{n-1}{2}} j \circ \partial_{[t_n],R} \circ \cdots \circ \partial_{[t_2],R} \big( \kappa(\overline{\gamma})/{t_2 \cdots t_n}\big)\\
&= (-1)^{\binom{n-1}{2}+n+1} \partial_{[t_n]} \circ \cdots \circ \partial_{[t_2]} \circ j\big( \kappa(\overline{\gamma})/{t_2 \cdots t_n}\big)
\end{align*}
which by Lemma \ref{lemma:explicit_resmap_kappa} and Theorem \ref{theorem:sastry_yek} becomes
\begin{align*}
&= (-1)^{\binom{n-1}{2}+n+1} \partial_{[t_n]} \circ \cdots \circ \partial_{[t_2]} \circ \partial_{[W]}\big( \eta(\gamma)/{W t_2 \cdots t_n}\big)\\
&= (-1)^{\binom{n-1}{2}+\binom{n}{2}+n+1}\big[ \gamma \,/\, W, t_2,\ldots,t_n \big]_S = \big[ \gamma \,/\, W, t_2,\ldots,t_n \big]_S
\end{align*}
as required.
\end{proof}

\bibliographystyle{amsalpha}

\begin{thebibliography}{MSV09}

\bibitem[Aus78]{Auslander78}
M.~Auslander, \emph{Functors and morphisms determined by objects},
  Representation theory of algebras ({P}roc. {C}onf., {T}emple {U}niv.,
  {P}hiladelphia, {P}a., 1976), Dekker, New York, 1978, pp.~1--244. Lecture
  Notes in Pure Appl. Math., Vol. 37. 

\bibitem[AM02]{Avramov}
L.L.~Avramov and A.~Martsinkovsky, \emph{Absolute, relative, and Tate cohomology of modules of finite Gorenstein dimension}, Proc. London Math. Soc. (3) \textbf{85} (2002), 393--440.

\bibitem[AV07]{Avramov2}
L.L.~Avramov and O.~Veliche, \emph{Stable cohomology over local rings}, Adv. Math. 213 (2007), no. 1, 93--139.

\bibitem[BK89]{Bondal89}
A.I.~Bondal and M.M.~Kapranov, \emph{Representable functors, Serre functors, and reconstructions}, Isz. Akad. Nauk SSSR Ser. Mat. \textbf{53} (1989), no.6, 1183--1205.

\bibitem[Bro67]{Brown67}
R.~Brown, \emph{The twisted Eilenberg-Zilber theorem}, Celebrazioni Archimedee del secolo XX, Simposio di topologia (1967), 34--37.

\bibitem[BD08]{Burban08}
I.~Burban and Y.~Drozd, \emph{Maximal {C}ohen-{M}acaulay modules over
  surface singularities}, Trends in representation theory of algebras and
  related topics, EMS Ser. Congr. Rep., Eur. Math. Soc., Z\"urich, 2008,
  pp.~101--166. 

\bibitem[BH93]{Herzog}
W.~Bruns and J.~Herzog, \emph{Cohen-{M}acaulay rings}, Cambridge
  Studies in Advanced Mathematics, vol.~39, Cambridge University Press,
  Cambridge, 1993. 

\bibitem[Buc87]{Buchweitz}
R.-O.~Buchweitz, \emph{Maximal {C}ohen-{M}acaulay modules and
  {T}ate-cohomology over {G}orenstein rings}, Unpublished (1987), 155pp. Available from: \url{https://tspace.library.utoronto.ca/handle/1807/16682}.

\bibitem[BGS87]{BGS87}
R.-O.~Buchweitz, G.-M.~Greuel and F.-O.~Schreyer, \emph{{C}ohen-{M}acaulay modules on hypersurface singularities II}, Invent. Math. \textbf{88} (1987), no.1, 165--182.

\bibitem[Car09]{Carqueville}
N.~Carqueville, \emph{Matrix factorisations and open topological string theory}, JHEP 0907:005, 2009.

\bibitem[CE56]{CartanEilenberg}
H.~Cartan and S.~Eilenberg, \emph{Homological algebra}, Princeton Univ. Press, Princeton, NJ, 1956.

\bibitem[Chr00]{Christensen00}
L.W.~Christensen, \emph{Gorenstein dimensions}, Lecture Notes in
  Mathematics, vol. 1747, Springer-Verlag, Berlin, 2000. 

\bibitem[Con00]{Conrad00}
B.~Conrad, \emph{Grothendieck duality and base change}, Lecture Notes in
  Mathematics, vol. 1750, Springer-Verlag, Berlin, 2000. 

\bibitem[Cra04]{Crainic}
M.~Crainic, \emph{On the perturbation lemma, and deformations}, arXiv:math/0403266v1.

\bibitem[dS02]{Salas02}
F.-S.~de~Salas, \emph{Residue: A geometric construction}, Canad.~Math.~Bull.~
  \textbf{45} (2002), no.~2, 284--293.

\bibitem[Dou01]{Douglas}
M.~R.~Douglas, \emph{D-Branes, Categories, and N = 1 supersymmetry}, J. Math. Phys. \textbf{42} (2001) 2818--2843, arXiv:hep-th/0011017.

\bibitem[DM10]{dm10}
T.~Dyckerhoff and D.~Murfet, \emph{The Kapustin-Li formula revisited}, arXiv:1004.0687.

\bibitem[DM11]{dm1102.2957}
T.~Dyckerhoff and D.~Murfet, \emph{Pushing forward matrix factorisations}, arXiv:1102.2957.

\bibitem[Eis80]{Eisenbud80}
D.~Eisenbud, \emph{Homological algebra on a complete intersection, with an
  application to group representations}, Trans. Amer. Math. Soc. \textbf{260}
  (1980), no.~1, 35--64. 

\bibitem[EJ95]{Enochs}
E.E.~Enochs and O.M.G.~Jenda, \emph{Gorenstein injective and projective modules}, Math. Z. \textbf{220} (1995), no.4, 611--633.

\bibitem[Gab58]{Gabriel}
P.~Gabriel, \emph{Objets injectifs dans les cat\'egories ab\'eliennes}, S\'eminaire P.~Dubriel ($12^e$ ann\'ee $1958/1959$), 17-01 to 17-32.

\bibitem[GLS07]{Greuel07}
G.~M. Greuel, C.~Lossen, and E.~Shustin, \emph{Introduction to singularities
  and deformations}, Springer Monographs in Mathematics, Springer, Berlin,
  2007.

\bibitem[GH78]{Griffiths}
P.~Griffiths and J.~Harris, \emph{Principles of algebraic geometry}, Pure and
  Applied Mathematics, Wiley, 1978.

\bibitem[Gug72]{Gugenheim}
V.~K., A.~M. Gugenheim, \emph{On the chain complex of a fibration}, Illinois J. Math. \textbf{16} (1972), 398--414.

\bibitem[Har66]{ResiduesDuality}
R.~Hartshorne, \emph{Residues and duality}, Lecture Notes in Mathematics,
  No. 20, Springer-Verlag, Berlin, 1966. 

\bibitem[Har70]{Hartshorne}
\bysame, \emph{Ample subvarieties of algebraic varieties}, Lecture Notes in Mathematics, No. 156, Springer-Verlag, Berlin, 1970.

\bibitem[HL05]{Herbst05}
M.~Herbst and C.-I.~Lazaroiu, \emph{Localization and traces in
  open-closed topological {L}andau-{G}inzburg models}, J. High Energy Phys.
  (2005), no.~5, 044, 31 pp. (electronic). 

\bibitem[HK90]{Kunz90}
R.~H\"ubl and E.~Kunz, \emph{Integration of differential forms on schemes}, J.
  Reine u. Angew. Math. (1990), no.~410, 53--83.

\bibitem[J{\o}r07]{Jorgensen07}
P.~J{\o}rgensen, \emph{Existence of {G}orenstein projective resolutions and
  {T}ate cohomology}, J. Eur. Math. Soc. (JEMS) \textbf{9} (2007), no.~1,
  59--76. 

\bibitem[KL03a]{Kapustin03}
A.~Kapustin and Y.~Li, \emph{D-branes in {L}andau-{G}inzburg models and
  algebraic geometry}, J. High Energy Phys. (2003), no.~12, 005, 44 pp.
  (electronic). 

\bibitem[KL03b]{KapustinLi}
\bysame, \emph{Topological correlators in {L}andau-{G}inzburg models with
  boundaries}, Adv. Theor. Math. Phys. \textbf{7} (2003), no.~4, 727--749.

\bibitem[Kel08]{Keller08}
B.~Keller, \emph{Calabi-Yau triangulated categories}, in: Trends in Representation Theory of Algebras, edited by A.~Skowronski, European Mathematical Society, Zurich, 2008.

\bibitem[Kn\"o87]{Knorrer}
H.~Kn\"orrer, \emph{{C}ohen-{M}acaulay modules on hypersurface singularities I}, Invent. Math. \textbf{88} (1987), no.1, 153--164.

\bibitem[Kon95]{Kontsevich}
M.~Kontsevich, \emph{Homological algebra of mirror symmetry}, in ``Proceedings of the International Congress of Mathematicians'', pages 120--139, Birkh\"auser, 1995, arXiv:alg-geom/9411018.

\bibitem[Kra05]{Krause05}
H.~Krause, \emph{The stable derived category of a {N}oetherian scheme},
  Compos. Math. \textbf{141} (2005), no.~5, 1128--1162. 

\bibitem[Kun08]{Kunz08}
E.~Kunz, \emph{Residues and duality for projective algebraic varieties},
  University Lecture Series, vol.~47, American Mathematical Society, 2008.

\bibitem[Laz03]{Laz03}
C.~I.~Lazaroiu, \emph{On the boundary coupling of topological Landau-Ginzburg models}, JHEP 0505 (2005), 037, arXiv:hep-th/0312286.

\bibitem[Lip84]{Lipman84}
J.~Lipman, \emph{Dualizing sheaves, differentials and residues on algebraic
  varieties}, Ast\'erisque (1984), no.~117, ii+138. 

\bibitem[Lip87]{Lipman87}
\bysame, \emph{Residues and traces of differential forms via {H}ochschild
  homology}, Contemporary Mathematics, vol.~61, American Mathematical Society,
  Providence, RI, 1987. 

\bibitem[Lip01]{Lipman01}
\bysame, \emph{Lectures on local cohomology and duality. in ``{L}ocal cohomology
  and its applications''}, Lecture Notes in Pure and Appl. Math., vol. 226,
  Marcel Dekker, New York, 2001.

\bibitem[LNS05]{Lipman05}
J.~Lipman, S.~Nayak, and P.~Sastry, \emph{Pseudofunctorial behavior of {C}ousin
  complexes on formal schemes}, Contemporary Mathematics \textbf{375} (2005).

\bibitem[LS92]{LipmanSastry}
J.~Lipman and P.~Sastry, \emph{Regular differentials and equidimensional scheme-maps}, J. Alg. Geom. 1 (1992), 101--130.

\bibitem[MSV09]{Mackaay09}
M.~Mackaay, M.~Sto{\v{s}}i{\'c}, and P.~Vaz,
  \emph{{$\mathfrak{sl}(N)$}-link homology {$(N\geq 4)$} using foams and the
  {K}apustin-{L}i formula}, Geom. Topol. \textbf{13} (2009), no.~2, 1075--1128.

\bibitem[Nee01]{NeemanBook}
\bysame, \emph{Triangulated categories}, Annals of Mathematics Studies, vol.
  148, Princeton University Press, Princeton, NJ, 2001. 

\bibitem[Nor74]{Northcott}
D.G.~Northcott, \emph{Injective envelopes and inverse polynomials}, J. London Math. Soc. (2), \textbf{8} (1974), 290--296.

\bibitem[Orl04]{Orlov04}
D.~Orlov, \emph{Triangulated categories of singularities and {D}-branes in
  {L}andau-{G}inzburg models}, Tr. Mat. Inst. Steklova \textbf{246} (2004),
  no.~Algebr. Geom. Metody, Svyazi i Prilozh., 240--262.

\bibitem[SY95]{Yek95}
P.~Sastry and A.~Yekutieli, \emph{On residue complexes, dualizing
  sheaves and local cohomology modules}, Israel J. Math. \textbf{90} (1995),
  no.~1-3, 325--348. 

\bibitem[Seg09]{Segal09}
E.~Segal, \emph{The closed state space of affine {L}andau-{G}inzburg {B}-models}, arXiv:0904.1339v1.

\bibitem[Ser59]{Serre59}
J.-P.~Serre, \emph{Groupes Alg\'ebriques et Corps de Classes}, Hermann, Paris (1959).

\bibitem[SZ82a]{Sharp82}
R.Y.~Sharp and H.~Zakeri, \emph{Modules of generalized fractions}, Mathematika \text{29} (1982), 32--41.

\bibitem[SZ82b]{Sharp82b}
\bysame, \emph{Local cohomology and modules of generalized fractions}, Mathematika \text{29} (1982), 296--306.

\bibitem[Shi62]{Shih62}
W.~Shih, \emph{Homology des espaces fibr\'es}, Inst. des Hautes \'Etudes Sci. \textbf{13} (1962), 93--176.

\bibitem[Tat68]{Tate68}
J.~Tate, \emph{Residues of differentials on curves}, \emph{Ann. Sci. de l'E.N.S.} (4) 1 (1968), 149--159.

\bibitem[Ver96]{Verdier96}
J.-L.~Verdier, \emph{Des cat\'egories d\'eriv\'ees des cat\'egories
  ab\'eliennes}, Ast\'erisque (1996), no.~239, xii+253 pp. (1997), With a
  preface by L.~Illusie, Edited and with a note by G.~Maltsiniotis.

\bibitem[Wit92a]{Witten92a}
E.~Witten, \emph{Mirror manifolds and topological field theory}, in \emph{Essays on Mirror Manifolds}, ed. S.-T.~Yau (International Press, 1992).

\bibitem[Wit92b]{Witten92}
E.~Witten, \emph{Chern-Simons gauge theory as a string theory}, preprint IASSNS-HEP-92/45 and hep-th/9207094.

\bibitem[Yos90]{Yoshino90}
Y.~Yoshino, \emph{Cohen-{M}acaulay modules over {C}ohen-{M}acaulay rings},
  London Mathematical Society Lecture Note Series, vol. 146, Cambridge
  University Press, Cambridge, 1990. 

\end{thebibliography}

\end{document}